\newtheorem{theorem}{Theorem}[section]
\newtheorem{corollary}[theorem]{Corollary}
\newtheorem{proposition}[theorem]{Proposition}
\begin{document}

\title{Sharp criteria of Liouville type for some nonlinear systems}
\author{Yutian Lei and Congming Li}

\date{}
\maketitle

\begin{abstract}
In this paper, we establish the sharp criteria for the nonexistence of positive solutions to the Hardy-Littlewood-Sobolev (HLS) type system of nonlinear equations and the corresponding nonlinear differential systems of Lane-Emden type equations. These nonexistence results, known as Liouville type theorems, are fundamental in PDE theory and applications. A special iteration scheme, a new shooting method and some Pohozaev type identities in integral form as well as in differential form are created. Combining these new techniques with some observations and some critical asymptotic analysis, we establish the sharp criteria of Liouville type for our systems of nonlinear equations. Similar results are also derived for the system of Wolff type integral equations and the system of $\gamma$-Laplace equations. A dichotomy description in terms of existence and nonexistence for solutions with finite energy is also obtained.
\end{abstract}

\noindent{\bf{Keywords}}: critical exponents, Liouville type
theorems, HLS type integral equations, Wolff type integral
equations, semilinear Lane-Emden equations, $\gamma$-Laplace
equations, necessary and sufficient conditions of existence/nonexistence.

\noindent{\bf{MSC2000}}: 35J50, 45E10, 45G05

\section{Introduction }

In this paper, we establish sharp criteria for existence
and nonexistence of positive solutions to the Hardy-Littlewood-Sobolev
(HLS) system of nonlinear equations
\begin{equation}
 \left \{
   \begin{array}{l}
   u(x)= \displaystyle\int_{R^n}\frac{v^q(y)dy}{|x-y|^{n-\alpha}}\\
   v(x)=\displaystyle\int_{R^n}\frac{u^p(y)dy}{|x-y|^{n-\alpha}},
   \end{array}
   \right.\label{1.3s}          
 \end{equation}
and the corresponding nonlinear differential systems of Lane-Emden type equations
\begin{equation} \label{1.1s}
 \left \{
   \begin{array}{l}
      (-\Delta)^k u=v^q, ~u,v>0,\\
      (-\Delta)^k v=u^p, ~p,q>0.
   \end{array}
   \right.
\end{equation}
These systems are the `blow up' equations for a large class of systems of
nonlinear equations arising from geometric analysis, fluid dynamics,
and other physical sciences. The nonexistence of positive solutions for systems of
`blow up' type like (\ref{1.3s}) and (\ref{1.1s}), known as Liouville type theorem,
is useful in deriving existence, a priori estimate, regularity and asymptotic
analysis of solutions. Another important topic is the study of the Wolff
type system of nonlinear equations:
\begin{equation}
 \left \{
   \begin{array}{l}
      u(x)=W_{\beta,\gamma}(v^q)(x)\\
      v(x)=W_{\beta,\gamma}(u^p)(x),
   \end{array}
   \right. \label{1.4s}         
 \end{equation}
and the corresponding system of $\gamma$-Laplace equations
\begin{equation}
 \left \{
   \begin{array}{l}
      -\Delta_\gamma u(x)=v^q(x), \quad x \in R^n,\\
      -\Delta_\gamma v(x)=u^p(x), \quad x \in R^n.
   \end{array}
   \right.\label{1.5s}          
 \end{equation}

Recall a Liouville-type theorem for the Lane-Emden equation
\begin{equation} \label{1.1}
-\Delta u=u^p, \quad in ~ R^n ~(n \geq 3)
\end{equation}
obtained by Caffarelli, Gidas and Spruck \cite{CGS}: if $p \in
(0,\frac{n+2}{n-2})$, then (\ref{1.1}) has no positive classical
solution. When $p \geq \frac{n+2}{n-2}$, (\ref{1.1}) has positive
classical solution. Namely, the right end point $\frac{n+2}{n-2}$
is a sharp criterion distinguishing the existence and the nonexistence.
Numbers like this, separating the existence and the nonexistence, are
called the critical exponents.

In the following theorem, we obtain the sharp criteria on the existence
and the nonexistence of solutions to (\ref{1.1s}):

\begin{theorem} \label{th1.1}
Assume $k \in [1,\frac{n}{2})$ is an integer.

(1) The $2k$-order equation
\begin{equation} \label{1.1k}
(-\Delta)^k u(x)=u^p(x), \quad x \in R^n
\end{equation}
has positive solutions if and only if
$p \geq \frac{n+2k}{n-2k}$.

(2) The $2k$-order system (\ref{1.1s})
has a pair of positive solutions $(u,v)$ if $\frac{1}{p+1}+
\frac{1}{q+1} \leq \frac{n-2k}{n}$.
\end{theorem}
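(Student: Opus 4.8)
The plan is to exhibit solutions directly. At the critical exponent $p=\frac{n+2k}{n-2k}$ the rescaled Aubin--Talenti bubble $U(x)=c_{n,k}(1+|x|^2)^{-(n-2k)/2}$ satisfies $(-\Delta)^k U=U^p$ for a suitable $c_{n,k}>0$, by the classical conformal identity for the polyharmonic operator. For $p>\frac{n+2k}{n-2k}$ I would use the homogeneous profile $A|x|^{-\sigma}$ with $\sigma=\tfrac{2k}{p-1}$: the hypothesis is equivalent to $\sigma<\tfrac{n-2k}{2}$, hence $0<\sigma+2j<n-2$ for $0\le j\le k-1$, so $(-\Delta)^k|x|^{-\sigma}=\Lambda_\sigma|x|^{-\sigma-2k}$ with $\Lambda_\sigma=\prod_{j=0}^{k-1}(\sigma+2j)(n-2-2j-\sigma)>0$ and $\sigma p=\sigma+2k$; then $A=\Lambda_\sigma^{1/(p-1)}$ gives the positive solution $A|x|^{-\sigma}$ in $R^n\setminus\{0\}$, while an entire smooth radial solution of Fowler type (decaying like $|x|^{-\sigma}$) can be produced from the radial $2k$-th order ODE if a classical solution on all of $R^n$ is required.

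\textbf{Part (1), necessity.} This Liouville half is the substantive point, and I would argue by contradiction. Suppose $u>0$ is a classical solution of $(-\Delta)^k u=u^p$ with $p<\frac{n+2k}{n-2k}$. Step one: establish the super-polyharmonic property $(-\Delta)^j u>0$ in $R^n$ for $1\le j\le k-1$ (vacuous for $k=1$, the case \eqref{1.1} treated in \cite{CGS}), by an averaging/iteration scheme of the type used by Caffarelli--Gidas--Spruck and Wei--Xu. Step two: upgrade this to the integral identity $u(x)=c_{n,k}\int_{R^n}|x-y|^{2k-n}u^p(y)\,dy$ by writing $u$ through the $2k$-th order Green representation on $B_R$ and showing, using positivity of $u$ and of the $(-\Delta)^j u$, that the polyharmonic entire part vanishes as $R\to\infty$; in particular $u$ is then bounded and decays. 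Step three: invoke the classification/Liouville theorem for this single HLS-type integral equation -- via the method of moving planes in integral form together with an integral Pohozaev identity (testing against $x\cdot\nabla u$) -- which forces the critical balance $p=\frac{n+2k}{n-2k}$, a contradiction. Boundedness, inherited from the classical hypothesis, is essential here: the profile $A|x|^{-\sigma}$ also solves the integral equation when $p>\tfrac{n}{n-2k}$, so the conclusion of step three genuinely needs the solution to be bounded.

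\textbf{Part (2), existence.} I would split on the sign of the gap in the hyperbola condition. If $\frac{1}{p+1}+\frac{1}{q+1}<\frac{n-2k}{n}$, I claim the explicit pair $u(x)=A|x|^{-a}$, $v(x)=B|x|^{-b}$ solves \eqref{1.1s}, where $(a,b)$ is the unique solution of $a+2k=bq$, $b+2k=ap$, namely $a=\frac{2k(q+1)}{pq-1}$, $b=\frac{2k(p+1)}{pq-1}$. A short computation shows the strict inequality is equivalent to $2k(p+q+1)+n<(n-2k)pq$, which forces $pq>1$ (hence $a,b>0$) and dominates both $2kq+n<(n-2k)pq$ and $2kp+n<(n-2k)pq$, that is $0<a,b<n-2k$; then $\Lambda_a,\Lambda_b>0$, and the algebraic system $\Lambda_a A=B^q$, $\Lambda_b B=A^p$ has a unique solution with $A,B>0$ because $pq>1$, giving a positive solution of \eqref{1.1s} in $R^n\setminus\{0\}$ (or a smooth radial ground state via the shooting method announced in the introduction, if smoothness is required). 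If $\frac{1}{p+1}+\frac{1}{q+1}=\frac{n-2k}{n}$, this is precisely the exponent relation under which the integral system \eqref{1.3s} with $\alpha=2k$ is the Euler--Lagrange system of the sharp Hardy--Littlewood--Sobolev inequality with kernel $|x-y|^{-(n-2k)}$; one takes an extremal pair (known to exist), normalizes the constants, and invokes the equivalence between \eqref{1.3s} with $\alpha=2k$ and the differential system \eqref{1.1s} to obtain a positive solution of \eqref{1.1s}.

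\textbf{Main obstacle.} The hard part is the $k\ge2$ regime of the two reductions above. Since $(-\Delta)^k$ admits no maximum principle, neither the super-polyharmonic property in Part (1) nor the equivalence between the integral and differential systems in Part (2) can be carried out componentwise; both must be set up through a carefully arranged iteration, and controlling the decay needed to discard the entire part in the Green representation -- so that the integral identity holds with the correct constant -- is delicate. Once these reductions are in place, the remaining work -- moving planes and Pohozaev identities on the integral side, and the classical existence of Hardy--Littlewood--Sobolev extremals -- is comparatively routine.
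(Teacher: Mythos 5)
Your outline matches the paper's in its broad decomposition: split the exponent range into critical, supercritical, and subcritical; use the explicit Aubin--Talenti-type bubble \eqref{poi} with $\alpha=2k$ (resp.\ the HLS extremals of Lieb) for the critical case together with the super-polyharmonic/equivalence machinery of \cite{LGZ}, \cite{CL-2010}; reduce the subcritical nonexistence to the Liouville theorem for the scalar integral equation \eqref{1.3}; and produce solutions in the supercritical range from the radial ODE. That is precisely the structure of Remark~6.1. On two points, though, the proposal is either thinner than the task requires or misweights where the work actually lives.

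First, and most importantly, your supercritical existence step is not a proof. The singular profile $A|x|^{-\sigma}$ (and likewise the pair $A|x|^{-a}$, $B|x|^{-b}$ in Part~(2)) solves the equation only on $R^n\setminus\{0\}$; the singularity at the origin is genuine and is not removable, so these profiles cannot serve as the ``positive solutions'' the theorem asserts. You acknowledge that a smooth entire radial solution is what is really needed and gesture at ``the radial $2k$-th order ODE,'' but for $k\ge 2$ the construction of such ground states in the supercritical range is not a corollary of the singular ansatz or a standard ODE exercise. It is the main new input here: the paper proves it for $k=2$ via its own shooting argument (Theorem~\ref{th6.1}), relies on the bounded-domain Pohozaev nonexistence results (Theorems~\ref{th6.3}, \ref{th6.4}) to make the shooting dichotomy close, and for general $k$ invokes the degree-theoretic shooting analysis of \cite{Li2011}. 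Without this, the ``if'' direction of Part~(1) and the strict-inequality case of Part~(2) are unsubstantiated. Your ``main obstacle'' paragraph identifies the super-polyharmonic iteration and the Green-representation decay as the hard parts and declares the rest routine; in fact the opposite is closer to the truth for the existence half --- the super-polyharmonic property is already available from \cite{LGZ}, whereas the supercritical shooting is where the paper and \cite{Li2011} invest effort.

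Second, in Part~(1) necessity you propose re-deriving the subcritical Liouville theorem for the integral equation via moving planes and a Pohozaev identity. That is a correct route in principle, but two cautions: (i) the Pohozaev identity in integral form (used in Theorem~\ref{th4.5}) forces $p=\frac{n+\alpha}{n-\alpha}$ only for \emph{finite energy} solutions, so by itself it does not deliver the general subcritical Liouville theorem --- you need the full moving-planes/Kelvin-transform argument of \cite{CGS}, \cite{CLO4}, \cite{Yu}, which the paper simply cites; (ii) your parenthetical that ``boundedness, inherited from the classical hypothesis, is essential'' is a little off --- classical solutions are not a priori bounded, and the feature that excludes $A|x|^{-\sigma}$ is regularity at the origin (so that it does not solve the PDE on all of $R^n$), not boundedness per se.

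With the shooting construction supplied (by reference to \cite{Li2011} and Theorem~\ref{th6.1}) and the subcritical nonexistence cited rather than re-derived, the rest of your plan is sound and tracks the paper's argument: the exponent bookkeeping for $a,b,\Lambda_a,\Lambda_b$ is correct, the observation that the critical hyperbola forces $pq>1$ (so the equivalence with the integral system applies) is the right step, and invoking the HLS extremals in the equality case is exactly Remark~6.1, item~2.
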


\paragraph{Remark 1.1.}
\begin{enumerate}
\item When $k=1$, the first result is coincident with the result in
\cite{CGS}.

\item Part 2 of our Theorem \ref{th1.1} together with the nonexistence result of Souplet \cite{Souplet}
imply that: for $n \leq 4$, (\ref{1.1s}) has a pair of solutions
{\it if and only if} $\frac{1}{p+1}+\frac{1}{q+1} \leq \frac{n-2k}{n}$.

\item For $k=1$, the nonexistence of solutions to (\ref{1.1s}), known as the
Lane-Emden conjecture, is still open for $n \geq 5$ (cf. \cite{Souplet}).
In the non-subcritical case, i.e. $\frac{1}{p+1}+\frac{1}{q+1} \leq
\frac{n-2}{n}$, (\ref{1.1s}) with $k=1$ has the positive solutions
(cf. \cite{SZ-1998}).
\end{enumerate}

\vskip 3mm

Next, we consider the HLS type system of nonlinear equations (\ref{1.3s})
and its scalar case $v \equiv u$, $q=p$:
\begin{equation} \label{1.3}
u(x)=\int_{R^n}\frac{u^p(y)dy}{|x-y|^{n-\alpha}}.
\end{equation}

Such equations are related to the study of the best constant of
Hardy--Littlewood-Soblev (HLS) inequality. Lieb \cite{Lieb}
classified all the extremal solutions of (\ref{1.3}), and thus
obtained the best constant in the HLS inequalities. He posed the
classification of all the solutions of (\ref{1.3}) as an open
problem.

The corresponding PDE is the semilinear equation
involving a fractional order differential operator
\begin{equation}
(- \Delta)^{\alpha/2} u = u^{(n+\alpha)/(n-\alpha)} , \;\; u>0,
\;\; \mbox{ in } R^n \label{de}.
\end{equation}
The classification of the solutions of (\ref{de}) with $\alpha=2$
has provided an important ingredient in the study of the
prescribing scalar curvature problem. It is also essential
in deriving priori estimates in many related nonlinear elliptic
equations. It was well studied by Gidas, Ni, and Nirenberg
\cite{GNN}. They proved that all the positive solutions
with reasonable behavior at infinity, namely
\begin{equation}
u(x) = O(\frac{1}{|x|^{n-2}}) \label{p1.3}
\end{equation}
are radially symmetric about some point. Caffarelli, Gidas, and
Spruck removed the decay condition (\ref{p1.3}) and obtained the
same result (cf. \cite{CGS}). Then Chen and Li \cite{CL1}, and Li
\cite{Li} simplified their proofs. Later, Chang, Yang and Lin
also considered some higher order equations (cf. \cite{CY},
\cite{Lin}). Wei and Xu \cite{WX}
generalized this result to the solutions of more general equation
(\ref{de}) with $\alpha$ being any even numbers between $0$ and
$n$. Chen, Li, and Ou solved the open problem as stated for the
integral equation (\ref{1.3}) or the corresponding PDE (\ref{de})
in \cite{ChLO}. The unique class of solutions can assume the form
\begin{equation} \label{poi}
u(x)=c(\frac{t}{t^2+|x|^2})^{\frac{n-\alpha}{2}}.
\end{equation}
Other related work
can be seen in \cite{CL}, \cite{YLi} and \cite{YiLi}.

Chen, Li and Ou \cite{CLO-CPDE} introduced the method of moving
planes in integral forms to study the symmetry of the solutions
for the HLS type system (\ref{1.3s}). Jin-Li and Hang thoroughly
discussed the regularity of the solutions of (\ref{1.3s}) (cf.
\cite{Hang} and \cite{JL}). They found the optimal integrability
intervals and established the smoothness for the
integrable solutions. Based on the results, \cite{LLM-CV} gave the
asymptotic behavior of the integrable solutions when $|x| \to 0$
and $|x| \to \infty$. Some Liouville type results can be seen in \cite{CDM}
and \cite{CL-DCDS}.

Another significance of the work \cite{CLO-CPDE} is the
equivalence of the integral equations and the PDEs involving the
fractional order differential operator. Recently, the fractional
Laplacians were applied extensively to describe various physical
and finance phenomena, such as anomalous diffusion, turbulence and
water waves, molecular dynamics, relativistic quantum mechanics,
and stable Levy process. The equivalence provides a technique in
studying the PDEs: one can use the corresponding integral
equations to investigate the global properties for those
phenomena.

A positive solution $u$ of (\ref{1.3}) is called a {\it finite
energy solution}, if $u \in L^{p+1}(R^n)$. Similarly, positive
solutions $u,v$ are called finite energy solutions of
(\ref{1.3s}), if $u \in L^{p+1}(R^n)$, $v \in L^{q+1}(R^n)$. Now,
we point out the relation between the critical conditions and the
existence of finite energy solutions of (\ref{1.3}) and
(\ref{1.3s}).

\begin{theorem} \label{th1.2}
(1) The HLS type integral equation (\ref{1.3})
has positive solutions in $L^{p+1}(R^n)$ if and only if
\begin{equation} \label{1.3ce}
p=\frac{n+\alpha}{n-\alpha}.
\end{equation}

(2) The system (\ref{1.3s})
has a pair of positive solutions $(u,v)$ in $L^{p+1}(R^n)
\times L^{q+1}(R^n)$ if and only if
\begin{equation} \label{1.3sce}
\frac{1}{p+1}+
\frac{1}{q+1}=\frac{n-\alpha}{n}.
\end{equation}
\end{theorem}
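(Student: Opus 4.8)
The plan is to prove both directions of each equivalence, treating the scalar case (1) as the special case $v \equiv u$, $p = q$ of the system (2), so I will focus on (2). The key structural fact is that finite-energy solutions of the HLS system enjoy extra integrability and decay, which forces the critical relation via a scaling/Pohozaev argument; conversely, in the critical case the explicit solutions of the form \eqref{poi} provide the required finite-energy pair.

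\textbf{Sufficiency (``if'').} Suppose \eqref{1.3sce} holds. I would first treat the scalar critical case: when $\frac{1}{p+1}+\frac{1}{q+1} = \frac{n-\alpha}{n}$ one can look for solutions of the form $u(x) = a(\frac{t}{t^2+|x|^2})^{n-\alpha \over ?}$, $v(x) = b(\frac{t}{t^2+|x|^2})^{\cdot}$; plugging these into \eqref{1.3s} and using the known integral identity $\int_{R^n}|x-y|^{\alpha-n}(\frac{t}{t^2+|y|^2})^{(n+\alpha)/2}\,dy = C_{n,\alpha} t^{-(n-\alpha)/2+\cdots}(\frac{t}{t^2+|x|^2})^{(n-\alpha)/2}$ (the computation underlying \cite{ChLO}), one solves the resulting algebraic system for the exponents and the constants $a,b$. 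The exponents are forced to satisfy precisely the relation \eqref{1.3sce}, and one checks directly that $u\in L^{p+1}$, $v\in L^{q+1}$ since the decay rate of a function like $|x|^{-(n-\alpha)}$ in $L^{s}(R^n)$ is integrable exactly at the borderline exponent. (When $p=q$ this recovers \eqref{poi} and part (1).) So in the critical case a finite-energy pair exists.

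\textbf{Necessity (``only if'').} Suppose $(u,v)$ is a finite-energy pair, $u \in L^{p+1}$, $v \in L^{q+1}$. The strategy has two ingredients. First, a regularity/integrability bootstrap (as in \cite{Hang}, \cite{JL}, \cite{LLM-CV}): starting from $u\in L^{p+1}$, $v\in L^{q+1}$ one uses the HLS inequality on the convolution operator $I_\alpha$ to successively improve integrability, ultimately obtaining the sharp asymptotics $u(x) \sim |x|^{-(n-\alpha)}$ and $v(x)\sim |x|^{-(n-\alpha)}$ as $|x|\to\infty$ (from $u(x)|x|^{n-\alpha}\to \int v^q$ and similarly for $v$, once $v^q, u^p\in L^1$), and smoothness. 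Second, a Pohozaev-type identity in integral form: multiplying the first equation by $x\cdot\nabla u$ (or rather its integral-equation analogue) and integrating, one derives an identity relating $\int u^p v$ weighted quantities; the boundary terms at infinity vanish precisely because of the decay just established. Matching the homogeneity degrees forces $\frac{n-\alpha}{q+1}+\frac{n-\alpha}{p+1} = n-\alpha$, i.e. exactly \eqref{1.3sce}. Equivalently, one can run the Pohozaev identity in integral form directly on \eqref{1.3s} — the paper announces ``Pohozaev type identities in integral form'' in the abstract, so this is the intended route.

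\textbf{Main obstacle.} The hard part will be the necessity direction — specifically, justifying that the Pohozaev boundary terms vanish, which requires the sharp decay rates $u, v = O(|x|^{-(n-\alpha)})$ and gradient estimates, and these in turn rest on the full regularity/integrability theory for the HLS system. Establishing (or carefully citing and assembling) that asymptotic analysis — and handling the case where $p$ or $q$ is small, where the bootstrap is delicate — is where the real work lies; the algebraic matching of exponents afterward is routine. The sufficiency direction is comparatively straightforward once the explicit bubble ansatz is in hand.
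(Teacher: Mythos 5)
Your necessity strategy---derive a Pohozaev-type identity in integral form by differentiating the rescaled equation $u(\mu x)=\mu^{\alpha}\int |x-z|^{\alpha-n}u^{p}(\mu z)\,dz$ at $\mu=1$, multiply by $u^p$, and show the boundary terms vanish---is indeed the paper's route in Theorems \ref{th4.5} and \ref{th5.3}. But you insist on first establishing sharp asymptotics $u,v\sim|x|^{\alpha-n}$, and this is both unnecessary and, for the system, \emph{not always correct}: on the critical curve $\frac{1}{p+1}+\frac{1}{q+1}=\frac{n-\alpha}{n}$ with $p\neq q$, one of the two components may decay with the slower rate $|x|^{\alpha-q(n-\alpha)}$ or carry a logarithmic factor (see \cite{LLM-CV}, \cite{SL}), because $v^q\in L^1$ need not hold. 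The paper sidesteps pointwise decay entirely: Proposition \ref{prop2.1}, applied to $u^{p+1},v^{q+1}\in L^1(R^n)$, yields a sequence $R_j\to\infty$ along which $R_j\int_{\partial B_{R_j}}u^{p+1}\,ds\to 0$; a single H\"older estimate (as in \eqref{L6}) together with the a priori lower bounds $p,q>\frac{\alpha}{n-\alpha}$ (from \cite{CL-DCDS} for the system, or from Theorem \ref{th2.3} for the scalar case) then makes the prefactor a negative power of $R$, so the boundary terms vanish. No bootstrap to sharp decay is needed; only the finite energy hypothesis itself is used.

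The more serious gap is in the sufficiency direction for part (2). You propose a bubble ansatz $u=a\bigl(\frac{t}{t^2+|x|^2}\bigr)^{\sigma_1}$, $v=b\bigl(\frac{t}{t^2+|x|^2}\bigr)^{\sigma_2}$ and claim the algebraic matching of exponents yields a solution for every $(p,q)$ on \eqref{1.3sce}. This fails for $p\neq q$: the exact identity $\int_{R^n}|x-y|^{\alpha-n}\bigl(\frac{t}{t^2+|y|^2}\bigr)^{\beta}dy = c\bigl(\frac{t}{t^2+|x|^2}\bigr)^{\beta-\alpha/2}$ (the conformal identity underlying \eqref{poi}) holds \emph{only} when $\beta=\frac{n+\alpha}{2}$; for other $\beta$ the Riesz potential of a bubble is asymptotically, but not exactly, a bubble. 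Running the ansatz through \eqref{1.3s} forces $q\sigma_2=p\sigma_1=\frac{n+\alpha}{2}$ and $\sigma_1=\sigma_2=\frac{n-\alpha}{2}$, hence $p=q=\frac{n+\alpha}{n-\alpha}$; the construction covers only the diagonal point of the critical curve. For $p\neq q$ there is no explicit solution, and the paper instead cites Lieb \cite{Lieb}: the extremal functions of the (non-diagonal) Hardy--Littlewood--Sobolev inequality exist by rearrangement and compactness, and after rescaling the Euler--Lagrange equations they give finite-energy solutions of \eqref{1.3s}. Your proof as written would only establish sufficiency for part (1), not for part (2).

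Finally, a small slip: matching homogeneities should give $\frac{1}{p+1}+\frac{1}{q+1}=\frac{n-\alpha}{n}$, not $\frac{n-\alpha}{q+1}+\frac{n-\alpha}{p+1}=n-\alpha$ as you wrote; the latter is a different relation.
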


\begin{corollary} \label{coro1.3}
Let $k \in [1,n/2)$ be an integer.

(1) Assume $p>1$. The $2k$-order PDE (\ref{1.1k})
has positive solutions in $L^{p+1}(R^n)$ if and only if
\begin{equation} \label{1.3kce}
p=\frac{n+2k}{n-2k}.
\end{equation}

(2) Assume $pq>1$. System (\ref{1.1s})
has a pair of positive solutions $(u,v)$ in $L^{p+1}(R^n)
\times L^{q+1}(R^n)$ if and only if
\begin{equation} \label{1.3ksce}
\frac{1}{p+1}+
\frac{1}{q+1}=\frac{n-2k}{n}.
\end{equation}
\end{corollary}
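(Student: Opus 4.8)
The plan is to derive Corollary~\ref{coro1.3} from Theorem~\ref{th1.2} by exploiting the equivalence between the polyharmonic systems (\ref{1.1k}), (\ref{1.1s}) and the Riesz-potential integral systems (\ref{1.3}), (\ref{1.3s}) in the special case $\alpha=2k$. Since $2k<n$, the fundamental solution of $(-\Delta)^k$ on $R^n$ is $c_{n,k}|x|^{-(n-2k)}$ with $c_{n,k}>0$ explicit, so the heart of the matter is the implication: \emph{if $(u,v)$ is a positive finite energy solution of (\ref{1.1s}), then}
\[
u(x)=c_{n,k}\int_{R^n}\frac{v^q(y)\,dy}{|x-y|^{n-2k}},\qquad v(x)=c_{n,k}\int_{R^n}\frac{u^p(y)\,dy}{|x-y|^{n-2k}},
\]
and similarly in the scalar case. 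Granting this, the rescaling $(u,v)\mapsto(\lambda u,\mu v)$ with $\lambda,\mu$ chosen so that $\lambda c_{n,k}\mu^{-q}=\mu c_{n,k}\lambda^{-p}=1$ — which is solvable precisely because $pq\neq 1$ (respectively $p\neq 1$ in the scalar equation) — turns (\ref{1.1s}) into (\ref{1.3s}) and (\ref{1.1k}) into (\ref{1.3}) with $\alpha=2k$; this is one place where the standing hypotheses $pq>1$ and $p>1$ enter. Conversely, differentiating the integral equation recovers the PDE, with $u$ remaining in the same $L^{p+1}(R^n)$ and being smooth by the regularity theory for HLS-type equations. Therefore both directions of Corollary~\ref{coro1.3} collapse onto Theorem~\ref{th1.2}: substituting $\alpha=2k$ into (\ref{1.3ce}) gives $p=\frac{n+2k}{n-2k}$, and into (\ref{1.3sce}) gives $\frac1{p+1}+\frac1{q+1}=\frac{n-2k}{n}$, which are exactly (\ref{1.3kce}) and (\ref{1.3ksce}).

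To prove the integral representation, I would first upgrade the finite energy solution to a bounded, smooth function that decays at infinity, using the optimal integrability intervals and smoothness established in \cite{Hang} and \cite{JL} together with $u\in L^{p+1}(R^n)$, $v\in L^{q+1}(R^n)$. The key structural input is the super-polyharmonic property $(-\Delta)^j u>0$ and $(-\Delta)^j v>0$ for $j=0,1,\dots,k-1$. With this in hand one peels off one Laplacian at a time: setting $w=(-\Delta)^{k-1}u\ge 0$, one has $-\Delta w=v^q\ge 0$, so $w$ equals the Newtonian potential of $v^q$ plus a nonnegative harmonic function; the latter is bounded above by $w$ (hence by a quantity decaying at infinity) and so must vanish. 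Iterating $k$ times — each step producing a nonnegative polyharmonic remainder that is killed by the decay obtained from the previous steps — yields the claimed formula, and identically in the scalar case. Finally, the \emph{existence} halves of Corollary~\ref{coro1.3} at the critical values follow from the existence halves of Theorem~\ref{th1.2} through the easy direction of the equivalence: a finite energy integral solution — in the scalar case the explicit function (\ref{poi}) with $\alpha=2k$, for which a direct computation confirms $u\in L^{p+1}(R^n)$ iff $p=\frac{n+2k}{n-2k}$ — differentiates to a finite energy solution of the PDE.

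The main obstacle is establishing the super-polyharmonicity $(-\Delta)^j u>0$ for the intermediate orders $0<j<k$: for a single polyharmonic inequality this may fail, so one must use the coupled sign structure of the system (or of (\ref{1.1k})) together with the integrability and decay of $(u,v)$. I expect to obtain it by a contradiction argument via the maximum principle or via integral estimates over large balls: if some intermediate Laplacian changed sign, comparison with a suitable auxiliary function would force $u$ or $v$ to grow, violating the $L^{p+1}\times L^{q+1}$ bound and the regularity already established. Once the equivalence is secured, no further analysis is required — Theorem~\ref{th1.2} supplies both the sharp critical conditions and the critical-case solutions.
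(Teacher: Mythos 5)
Your route coincides with the paper's: reduce Corollary~\ref{coro1.3} to Theorem~\ref{th1.2} by passing through the equivalence between (\ref{1.1k}), (\ref{1.1s}) and the HLS integral equation/system with $\alpha=2k$. In the paper this is exactly Corollary~\ref{coro4.6} and Corollary~\ref{coro5.4} (together with Remark~6.1(2) for the existence half of part~(2)), where the super-polyharmonicity $(-\Delta)^iu,(-\Delta)^iv>0$ and the resulting Riesz representation are cited from \cite{LGZ} and \cite{ChLO}, \cite{CL-2010} rather than reproved; your explicit normalization of the fundamental-solution constant $c_{n,k}$ by rescaling, valid because $pq\neq 1$, is a correct clarification that the paper leaves implicit. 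Two cautions. First, the super-polyharmonicity is the genuine technical input, and the contradiction argument you sketch would still have to be carried out in full; the cited result in \cite{LGZ} holds for \emph{all} positive solutions once $pq>1$, with no $L^{p+1}\times L^{q+1}$ or decay hypothesis, so the weaker version you propose suffices for the corollary but is not obviously easier to establish. Second, the clause ``$u\in L^{p+1}(R^n)$ iff $p=\frac{n+2k}{n-2k}$'' about (\ref{poi}) is loosely worded: that profile, decaying like $|x|^{-(n-2k)}$, lies in $L^{p+1}(R^n)$ for every $p>\frac{2k}{n-2k}$; what pins down the critical exponent is that (\ref{poi}) solves (\ref{1.3}) with $\alpha=2k$ only when $p=\frac{n+2k}{n-2k}$.
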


\paragraph{Remark 1.2.}
\begin{enumerate}
\item In the subcritical case $p<\frac{n+\alpha}{n-\alpha}$, Theorem
3 in \cite{CLO4} shows that (\ref{1.3}) has no locally finite
energy solution by using the method of moving planes and the
Kelvin transformation. For system (\ref{1.3s}), the proof of nonexistence
in the subcritical case
$\frac{1}{p+1}+\frac{1}{q+1}>\frac{n-\alpha}{n}$ is rather
difficult. It is usually called the HLS conjecture (cf. \cite{CDM}
and \cite{CL-DCDS}). Partial results are known.

(i) If $p \leq \frac{\alpha}{n-\alpha}$ or $q \leq
\frac{\alpha}{n-\alpha}$, (\ref{1.3s}) has no any positive
solution. In addition, if $p=1$ or $q=1$, then (\ref{1.3s}) has no
any positive solution. If the subcritical condition
$\frac{1}{p+1}+\frac{1}{q+1}>\frac{n-2}{n}$ holds, then
(\ref{1.1s}) with $k=1$ has no locally bounded positive solution
$(u,v)$ in $L^{p_1}(R^n) \times L^{q_1}(R^n)$, where
$p_1=\frac{n(pq-1)}{2(q+1)}$, $q_1=\frac{n(pq-1)}{2(p+1)}$ (cf.
\cite{CL-DCDS}).

(ii) If $\alpha \in [2,n)$, (\ref{1.3s}) has no any radial
positive solution (cf. \cite{CDM}). If $pq>1$, (\ref{1.1s}) has no
any radial positive solution (cf. \cite{LGZ}).

\item In the supercritical case $p>\frac{n+\alpha}{n-\alpha}$ with
$\alpha=2$, Li, Ni and Serrin proved the semilinear Lane-Emden
equation (\ref{1.1}) has the decay solution (cf. \cite{YiLi},
\cite{NS} and \cite{PS}). According to Corollary \ref{coro1.3},
the energy of those solutions $u$ are infinite. Namely,
$\|u\|_{p+1}=\infty$. Similarly, the positive solutions $u,v$
obtained in \cite{SZ-1998} when $\frac{1}{p+1}+\frac{1}{q+1} <
\frac{n-2}{n}$ are not the finite energy solutions (i.e.
$\|u\|_{p+1}=\|v\|_{q+1}=\infty$).

\item Theorem \ref{th1.2} shows that the critical conditions are
the sufficient and necessary conditions of existences of the
finite energy solutions for the HLS type integral equation and the
system. On the other hand, if the critical conditions hold, we
want to know whether all the positive classical solutions $u,v$
are finite energy solutions. Namely, $\|u\|_{p+1}, \|v\|_{q+1} <
\infty$. For the scalar equation (\ref{1.3}) with the critical
case $p=\frac{n+\alpha}{n-\alpha}$, (\ref{poi}) is the unique
class of finite energy solutions of (\ref{1.3}) (cf. \cite{ChLO}).
For system (\ref{1.3s}), it is still open.
\end{enumerate}

\vskip 3mm

The following $\gamma$-Laplace equation is also concerned in this
paper
\begin{equation} \label{1.5}
-\Delta_\gamma u(x):=-div(|\nabla u|^{\gamma-2}\nabla u)=u^p(x), \quad x \in R^n.
\end{equation}

\begin{theorem} \label{th1.4}
The $\gamma$-Laplace equation (\ref{1.5})
has positive classical solutions with $\int_{R^n}|\nabla u|^\gamma dx<\infty$
if and only if $p=\gamma^*-1$, where $\gamma^*=\frac{n\gamma}{n-\gamma}$.
\end{theorem}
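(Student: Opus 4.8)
The plan is to reduce the scalar $\gamma$-Laplace equation \eqref{1.5} to a Wolff-type integral equation and then invoke the finite-energy dichotomy already obtained for that setting, exactly as Corollary \ref{coro1.3} is derived from Theorem \ref{th1.2}. First I would establish the equivalence: if $u>0$ is a classical solution of \eqref{1.5} with $\int_{R^n}|\nabla u|^\gamma\,dx<\infty$, then $u$ satisfies
\begin{equation}
u(x)=c_{n,\gamma}\,W_{1,\gamma}(u^p)(x)\quad\text{in }R^n,
\end{equation}
for an explicit constant, where $W_{\beta,\gamma}$ is the Wolff potential appearing in \eqref{1.4s}; conversely any positive solution of this integral equation is a classical solution of \eqref{1.5}. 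This is the Havin--Maz'ya / Kilpeläinen--Malý representation for $\mathcal A$-superharmonic functions; I would cite it and then check that the finite-energy hypothesis $\int|\nabla u|^\gamma<\infty$ translates into the correct integrability of $u^p$, using the chain of inequalities relating $\|\nabla u\|_\gamma$, the Wolff potential norm, and $\|u\|_{L^{p+1}}$ via Sobolev/HLS-type bounds. The scalar Wolff equation is the diagonal case $v\equiv u$, $q=p$, $\beta=1$ of \eqref{1.4s}, and the finite-energy classification for the Wolff system (the result the abstract promises for \eqref{1.4s}, analogous to Theorem \ref{th1.2}(2)) specializes to: the scalar Wolff equation has an $L^{p+1}$ solution iff $\frac{2}{p+1}=\frac{n-\gamma}{n}$, i.e. $p=\frac{n+\gamma}{n-\gamma}$. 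A short computation shows $\frac{n+\gamma}{n-\gamma}=\gamma^*-1$ with $\gamma^*=\frac{n\gamma}{n-\gamma}$, which is the asserted critical exponent.

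For the sufficiency direction ($p=\gamma^*-1$ implies existence of a finite-energy solution) I would exhibit the explicit solution: the function
\begin{equation}
u(x)=c\Bigl(\frac{t}{t^{\gamma/(\gamma-1)}+|x|^{\gamma/(\gamma-1)}}\Bigr)^{(n-\gamma)/\gamma}
\end{equation}
(with $c,t>0$ chosen appropriately) is the well-known extremal for the Sobolev inequality $\|\nabla u\|_\gamma\gtrsim\|u\|_{\gamma^*}$, and a direct substitution verifies it solves \eqref{1.5} with $p=\gamma^*-1$; one then checks $\int_{R^n}|\nabla u|^\gamma\,dx<\infty$ by the decay rate of $|\nabla u|\sim|x|^{-(n-\gamma)/(\gamma-1)-1}$ at infinity, which is integrable to the $\gamma$ power precisely because $n<\frac{n\gamma}{\gamma-1}$ in the relevant range. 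For necessity, the Wolff-equation side gives that whenever $p\neq\frac{n+\gamma}{n-\gamma}$ no $L^{p+1}$ solution exists; translating back through the equivalence, no classical solution with $\int|\nabla u|^\gamma<\infty$ can exist, since such a solution would have to lie in $L^{p+1}$.

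The main obstacle is the non-subcritical case $p>\gamma^*-1$: here one must rule out finite-energy solutions even though infinite-energy (decaying) solutions do exist, paralleling Remark 1.2(2). The argument I expect to use is a Pohozaev-type identity in the form announced in the abstract: multiply \eqref{1.5} by $x\cdot\nabla u$ and by $u$, integrate over a ball $B_R$, and pass to the limit. The boundary terms are controlled precisely by $\int_{\partial B_R}(|\nabla u|^\gamma+|u|^{p+1})$, which tends to zero along a sequence $R_j\to\infty$ once we know $u\in L^{p+1}$ and $\nabla u\in L^\gamma$; the resulting identity forces the scaling relation $\frac{n-\gamma}{\gamma}=\frac{n}{p+1}$, i.e. $p=\gamma^*-1$, a contradiction. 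The delicate points are justifying the vanishing of boundary integrals along a subsequence (a standard but careful measure-theoretic argument using finiteness of the full integrals) and the limited $C^{1,\alpha}$ regularity of solutions to the degenerate/singular $\gamma$-Laplacian, which means the Pohozaev computation must be done with care—either via approximation or by working with the integral (Wolff) formulation, where an integral Pohozaev identity of the type developed earlier in the paper can be applied directly without differentiability concerns.
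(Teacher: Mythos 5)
Your proposal mixes two distinct strategies, and only one of them works. The reduction to a Wolff integral equation does \emph{not} go through as you describe, for three reasons. First, there is an arithmetic error: you assert that $\frac{n+\gamma}{n-\gamma}=\gamma^*-1$, but in fact
$$
\gamma^*-1=\frac{n\gamma}{n-\gamma}-1=\frac{n(\gamma-1)+\gamma}{n-\gamma},
$$
which equals $\frac{n+\gamma}{n-\gamma}$ only when $\gamma=2$; the ``short computation'' you promise would actually falsify your identification. Second, the Kilpel\"ainen--Mal\'y estimate gives $u\sim W_{1,\gamma}(u^p)$ only up to a double-bounded factor, so $u$ solves a Wolff equation with a variable coefficient $K(x)$, not the clean equation (\ref{3.3}); the paper makes exactly this point in Remark 4.2. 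Third, the finite-energy dichotomy for the coefficient-free Wolff equation that you plan to invoke is not available: Remark 4.2(1) explicitly states that whether (\ref{3.5}) is necessary and sufficient for the Wolff equation is \emph{open} in this paper. You cannot specialize a lemma that is not proved.

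Your fallback Pohozaev argument, however, is precisely the paper's proof. The paper first establishes Theorem \ref{th4.7}: for a classical solution with $u\in L^{\gamma^*}(R^n)$, one has $\nabla u\in L^\gamma(R^n)$ iff $u\in L^{p+1}(R^n)$, with $\|\nabla u\|_\gamma^\gamma=\|u\|_{p+1}^{p+1}$, obtained by multiplying the equation by $u\zeta_R^\gamma$ with a cut-off and controlling the error by Young and H\"older. Then in Theorem \ref{th4.8} the existence for $p=\gamma^*-1$ is supplied by the explicit Bliss-type solution from the literature, and nonexistence for $p\neq\gamma^*-1$ comes from multiplying by $x\cdot\nabla u$, integrating over $B_R$, and using Proposition \ref{prop2.1} to choose $R_j\to\infty$ along which the boundary integrals of $|\nabla u|^\gamma+u^{p+1}$ vanish; the resulting identity $(1-n/\gamma)\|\nabla u\|_\gamma^\gamma=-\frac{n}{p+1}\|u\|_{p+1}^{p+1}$, combined with the energy identity from Theorem \ref{th4.7}, forces $p=\gamma^*-1$. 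Your sketch correctly anticipates both the boundary-term issue and the limited regularity caveat, but these are the secondary route in your write-up; you should make it the primary one and drop the Wolff reduction entirely.
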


\paragraph{Remark 1.3.}
Serrin and Zou \cite{SZ} proved (\ref{1.5}) has the classical solution $u$
if and only if $p \geq \gamma^*-1$. Furthermore, Theorem \ref{th1.4} shows that
$u$ is also a finite energy solution (i.e. $\nabla u \in L^\gamma(R^n)$, see
Theorem \ref{th4.7}) if $p=\gamma^*-1$, and
$u$ is a infinite energy solution if $p>\gamma^*-1$.

\vskip 3mm

To study the $\gamma$-Laplace equations, we introduce
the Wolff potential of a positive locally integrable function $f$
$$
W_{\beta,\gamma}(f)(x):=\int_0^\infty
[\frac{\int_{B_t(x)}f(y)dy}{t^{n-\beta\gamma}}]^{\frac{1}{\gamma-1}} \frac{dt}{t}.
$$
The integral equation involving the Wolff potential
\begin{equation} \label{1.4c}
u(x)=c(x)W_{\beta,\gamma}(u^p)(x)
\end{equation}
is related with the study of many nonlinear problems.
The Wolff potentials are helpful to understand the nonlinear PDEs
such as the $\gamma$-Laplace equation and the $k$-Hessian equation
(cf. \cite{La}, \cite{LL}, \cite{LLM} and \cite{PV}).
According to \cite{KM}, if $\inf_{R^n}u=0$, there exists $C>0$ such that
the positive solution $u$ of (\ref{1.5}) satisfies
$$
\frac{1}{C}W_{1,\gamma}(u^p)(x) \leq u(x) \leq CW_{1,\gamma}(u^p)(x),
\quad x \in R^n.
$$
Thus, $u$ solves (\ref{1.4c}) for some double bounded $c(x)$.
Here, a function $c(x)$ is called {\it double bounded},
if there exists a positive constant
$C>0$ such that
$$
\frac{1}{C} \leq c(x) \leq C, \quad \forall ~x \in R^n.
$$
For the coupling system
$$
 \left \{
   \begin{array}{l}
      u(x)=W_{\beta,\gamma}(v^q)(x)\\
      v(x)=W_{\beta,\gamma}(u^p)(x),
   \end{array}
   \right.      
$$
Chen and Li \cite{ChenLi} proved the radial symmetry for the integrable solutions.
Afterward, Ma, Chen and Li \cite{ChLM} used the regularity lifting lemmas to
obtain the optimal integrability and the Lipschitz continuity. Based on these results,
\cite{Lei} obtained the decay rates of the integrable solutions when $|x|
\to \infty$.

The critical exponents and the critical conditions play a key role under the scaling
transform. In the following, some interesting observations are listed.

{\it
Under the scaling transform $u_\mu(x)=\mu^\sigma u(\mu x)$,

(1) the HLS equation (\ref{1.3}) and the energy $\|u\|_{p+1}$ are invariant
if and only if $p=\frac{n+\alpha}{n-\alpha}$;

(2) the Wolff equation
\begin{equation} \label{1.4}
u(x)=W_{\beta,\gamma}(u^p)(x)
\end{equation}
and the energy $\|u\|_{p+\gamma-1}$ are invariant if and only if
$p= \frac{n+\beta\gamma}{n-\beta\gamma}(\gamma-1)$; (\ref{1.4})
and the energy $\|u\|_{p+1}$ are invariant if and only if $p=
\gamma^*-1$ with $\gamma^*=\frac{n\gamma}{n-\beta\gamma}$;

(3) the $\gamma$-Laplace equation (\ref{1.5}) and the energy
$\|u\|_{p+\gamma-1}$ are invariant if and only if
$p=\frac{n+\gamma}{n-\gamma}(\gamma-1)$; (\ref{1.5}) and the
energy $\|u\|_{p+1}$ are invariant if and only if $p= \gamma^*-1$
with $\gamma^*=\frac{n\gamma}{n-\gamma}$;

(4) the HLS system (\ref{1.3s}) and $\|u\|_{p+1}$, $\|v\|_{q+1}$ are invariant
if and only if $\frac{1}{p+1}+
\frac{1}{q+1}=\frac{n-\alpha}{n}$;

(5) the Wolff system (\ref{1.4s})
and $\|u\|_{p+\gamma-1}$, $\|v\|_{q+\gamma-1}$ are invariant if
and only if
$\frac{1}{p+\gamma-1}+\frac{1}{q+\gamma-1}=\frac{n-\beta\gamma}
{n(\gamma-1)}$; (\ref{1.4s}) and $\|u\|_{p+1}$, $\|v\|_{q+1}$ are
invariant if and only if $p=q$ or $\gamma=2$;

(6) the $\gamma$-Laplace system (\ref{1.5s})
and $\|u\|_{p+\gamma-1}$, $\|v\|_{q+\gamma-1}$ are invariant if
and only if
$\frac{1}{p+\gamma-1}+\frac{1}{q+\gamma-1}=\frac{n-\gamma}
{n(\gamma-1)}$; (\ref{1.5s}) and $\|u\|_{p+1}$, $\|v\|_{q+1}$ are
invariant if and only if $p=q$ or $\gamma=2$. }

\paragraph{Remark 1.4.} Here an interesting observation is, the critical exponent
$\frac{n+\gamma}{n-\gamma}(\gamma-1)$ is different from the divided exponent
$\gamma^*-1$ in Theorem \ref{th1.4}
except $\gamma=2$. The reason is that those critical numbers
are in the different finite energy functions classes $L^{p+\gamma-1}(R^n)$ and
$L^{p+1}(R^n)$, respectively.

\vskip 3mm

Next, we are concerned with the sufficient and necessary
conditions for the existence of the positive solutions of
equations and systems with some double bounded
coefficients. Here, new divided numbers and conditions
appear.

\begin{theorem} \label{th1.5}
(1) The equation
\begin{equation} \label{1.3c}
u(x)=c(x)\int_{R^n}\frac{u^p(y)dy}{|x-y|^{n-\alpha}}
\end{equation}
has positive solutions for some
double bounded $c(x)$ if and only if $p > \frac{n}{n-\alpha}$.

(2) The HLS system
\begin{equation}
 \left \{
   \begin{array}{l}
      u(x)=c_1(x) \displaystyle\int_{R^n}\frac{v^q(y)dy}{|x-y|^{n-\alpha}}\\
      v(x)=c_2(x)\displaystyle\int_{R^n}\frac{u^p(y)dy}{|x-y|^{n-\alpha}}.
   \end{array}
   \right.\label{1.3cs}          
 \end{equation}
has positive solutions $u,v$ for
some double bounded $c_1(x)$ and $c_2(x)$, if and only if $pq>1$ and
$\max\{\frac{\alpha(p+1)}{pq-1}, \frac{\alpha(q+1)}{pq-1}\} <
n-\alpha$.
\end{theorem}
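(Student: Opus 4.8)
The plan is to prove assertion (2) and to recover (1) as its diagonal case $v=u$, $p=q$, $c_1=c_2=c$: for $p=q$ the requirements ``$pq>1$ and $\max\{\frac{\alpha(p+1)}{pq-1},\frac{\alpha(q+1)}{pq-1}\}<n-\alpha$'' collapse to ``$p>\frac{n}{n-\alpha}$'', and the construction below produces in that case a solution with $u=v$ and $c_1=c_2$. Everything rests on two elementary facts: the Riesz-potential asymptotics
\[
\int_{R^n}\frac{(1+|y|)^{-c}}{|x-y|^{n-\alpha}}\,dy\asymp(1+|x|)^{\alpha-c}\qquad\text{for }\ \alpha<c<n,
\]
obtained by decomposing $R^n=\{|y|<|x|/2\}\cup\{|y-x|<|x|/2\}\cup\{|y|>2|x|\}$, and the exact identity $\int_{|y-x|<|x|/2}|x-y|^{\alpha-n}\,dy=c_{n,\alpha}\,|x|^{\alpha}$. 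Put $a=\frac{\alpha(q+1)}{pq-1}$, $b=\frac{\alpha(p+1)}{pq-1}$; when $pq>1$ one has $a,b>0$ together with $bq=a+\alpha$, $ap=b+\alpha$, so $a<n-\alpha\Leftrightarrow bq<n$, $b<n-\alpha\Leftrightarrow ap<n$, while $a,b>0$ already give $ap>\alpha$, $bq>\alpha$.

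\medskip
\noindent\emph{Sufficiency.} If $pq>1$ and $\max\{a,b\}<n-\alpha$, I would take $u(x)=(1+|x|)^{-a}$, $v(x)=(1+|x|)^{-b}$. Then $\alpha<ap<n$ and $\alpha<bq<n$, so the Riesz estimate gives $\int_{R^n}v^q(y)|x-y|^{\alpha-n}dy\asymp(1+|x|)^{\alpha-bq}=u(x)$ and $\int_{R^n}u^p(y)|x-y|^{\alpha-n}dy\asymp v(x)$; both integrals converge and define strictly positive continuous functions of $x$. Hence $c_1(x):=u(x)\big/\int_{R^n}v^q(y)|x-y|^{\alpha-n}dy$ and $c_2(x):=v(x)\big/\int_{R^n}u^p(y)|x-y|^{\alpha-n}dy$ are double bounded, and $(u,v)$ solves (\ref{1.3cs}) with these coefficients.

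\medskip
\noindent\emph{Necessity --- the generic cases.} Let $(u,v)$ solve (\ref{1.3cs}) with $1/C\le c_i\le C$. Restricting each defining integral to a fixed ball and using $c_i\ge1/C$ gives at once the baseline $u(x),v(x)\gtrsim(1+|x|)^{\alpha-n}$. The near-diagonal identity then yields a clean pointwise bootstrap: if $v(y)\gtrsim|y|^{\mu}$ for $|y|$ large, then $u(x)\ge\frac1C\int_{|y-x|<|x|/2}\frac{v^q(y)}{|x-y|^{n-\alpha}}dy\gtrsim|x|^{q\mu+\alpha}$, and symmetrically. Iterating from $\mu=\alpha-n$ produces sequences with $\mu^u_{k+1}=q\mu^v_k+\alpha$, $\mu^v_{k+1}=p\mu^u_k+\alpha$, hence $\mu^u_{k+2}=pq\,\mu^u_k+\alpha(q+1)$ (likewise for $v$), a linear recursion with multiplier $pq$ and fixed point $-a$, so $\mu^u_{2j}+a=(pq)^j(\alpha-n+a)$. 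If $pq\le1$, or if $pq>1$ with $a>n-\alpha$ (or, by symmetry, $b>n-\alpha$), then one of these exponent sequences eventually becomes positive; but once $\mu^u_k>0$, the far-field estimate $v(x)=c_2\int_{R^n}\frac{u^p(y)}{|x-y|^{n-\alpha}}dy\ge c\int_{|y|>2|x|}|y|^{p\mu^u_k+\alpha-n}dy=+\infty$ contradicts finiteness of $v(x)$. This settles every case except the borderline one $\max\{a,b\}=n-\alpha$ with $pq>1$.

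\medskip
\noindent\emph{The borderline case --- the crux.} Say $a=n-\alpha$ (the case $b=n-\alpha$ is symmetric); then $bq=a+\alpha=n$, precisely the value at which the Riesz potential of $v^q$ acquires a logarithm, and the polynomial bootstrap stalls at the critical rates $u\sim|x|^{-a}$, $v\sim|x|^{-b}$. To recover the lost ground I would pass to the ball-averages $\Phi(R)=\int_{B_R}u^p$, $\Psi(R)=\int_{B_R}v^q$, using $u(x)\gtrsim|x|^{\alpha-n}\Psi(|x|/2)$ and $v(x)\gtrsim|x|^{\alpha-n}\Phi(|x|/2)$: since $bq=n$ the baseline gives $\Psi(R)\gtrsim\log R$, whence $u(x)\gtrsim|x|^{-a}\log|x|$, and iterating produces $u(x)\gtrsim|x|^{-a}(\log|x|)^{t_k}$ with $t_{k+1}=pq\,t_k+1$, so $t_k\to\infty$. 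The hard part --- and, I expect, the main technical obstacle of the whole theorem --- is to turn this divergent logarithmic lower bound into a genuine contradiction: unlike in the polynomial bootstrap, the threshold radii and the multiplicative constants of the iteration no longer stay uniform, so one must track them accurately enough to force $u$ to be infinite at points of sufficiently large modulus, or else combine the lower bound with an a priori decay estimate. Once this borderline analysis is in place, the remaining pieces are just the two potential estimates above and the bookkeeping of the linear exponent recursions.
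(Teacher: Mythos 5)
Your sufficiency argument and your ``generic'' necessity argument (the cases $pq\le 1$, or $pq>1$ with $\max\{a,b\}>n-\alpha$) are correct and match the paper's Theorem~3.1 essentially step for step: same power ansatz for existence, same Riesz-potential bootstrap $a_0=n-\alpha$, $a_{k+1}=q(pa_k-\alpha)-\alpha$ driving an exponent to a sign change and forcing a divergent far-field integral. Your reduction of part~(1) to part~(2) by taking $p=q$, $u=v$, $c_1=c_2$ is also sound.

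The gap is exactly where you flag it: the borderline case $pq>1$ with $\max\{a,b\}=n-\alpha$. Your logarithmic bootstrap is not pushed through, and as you note the constants and threshold radii degrade from step to step, so $u(x)\gtrsim |x|^{-a}(\log|x|)^{t_k}$ with $t_k\to\infty$ does not by itself yield $u(x)=+\infty$ for finite $x$. The paper avoids logarithms entirely by a scale-invariant integral estimate that closes in two lines. From
\[
u(x)\ \ge\ \frac{c}{(R+|x|)^{n-\alpha}}\int_{B_R}v^q\,dy,\qquad
v(x)\ \ge\ \frac{c}{(R+|x|)^{n-\alpha}}\int_{B_R}u^p\,dy,
\]
raise to powers and integrate over $B_R$ to obtain
\[
\int_{B_R}u^p\ \ge\ \frac{c}{R^{p(n-\alpha)-n}}\Bigl(\int_{B_R}v^q\Bigr)^p,\qquad
\int_{B_R}v^q\ \ge\ \frac{c}{R^{q(n-\alpha)-n}}\Bigl(\int_{B_R}u^p\Bigr)^q.
\]
Assuming (WLOG) $p\le q$, so the active equality is $a=\frac{\alpha(q+1)}{pq-1}=n-\alpha$, substituting one bound into the other makes the power of $R$ cancel exactly, leaving $\int_{B_R}v^q\ge c\bigl(\int_{B_R}v^q\bigr)^{pq}$ with $c$ independent of $R$. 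Since $pq>1$, letting $R\to\infty$ shows $v\in L^q(R^n)$. Repeating the same computation but integrating the pointwise bound over the annulus $A_R=B_{2R}\setminus B_R$ gives $\int_{A_R}v^q\ge c\bigl(\int_{B_R}v^q\bigr)^{pq}$; letting $R\to\infty$ and using $v\in L^q$ forces $\int_{R^n}v^q=0$, a contradiction. You should replace your logarithmic attempt with this argument (it is the same device as Step~3 of the paper's Theorem~2.3 for the scalar borderline $p=\frac{n}{n-\alpha}$); without it the proof of necessity is incomplete.
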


\begin{corollary} \label{coro1.6}
Let $k \in [1,n/2)$ be an integer.

(1) Assume $p> 1$. The $2k$-order PDE
\begin{equation} \label{1.2kc}
(-\Delta)^k u(x)=c(x)u^p(x), \quad x \in R^n,
\end{equation}
has positive solutions for some
double bounded $c(x)$ if and only if $p > \frac{n}{n-2k}$.

(2) Assume $pq>1$. The system
\begin{equation}
 \left \{
   \begin{array}{l}
      (-\Delta)^k u(x)=c_1(x)v^q(x)\\
      (-\Delta)^k v(x)=c_2(x)u^p(x).
   \end{array}
   \right.    \label{1.1cs}      
 \end{equation}
has positive solutions $u,v$ for some double bounded $c_1(x)$ and
$c_2(x)$, if and only if $\max\{\frac{2k(p+1)}{pq-1},
\frac{2k(q+1)}{pq-1}\} < n-2k$.
\end{corollary}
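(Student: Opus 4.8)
The plan is to reduce Corollary~\ref{coro1.6} to Theorem~\ref{th1.5} with $\alpha=2k$ by transferring solutions between the differential problems (\ref{1.2kc}), (\ref{1.1cs}) and the integral problems (\ref{1.3c}), (\ref{1.3cs}). Recall that for $1\le k<n/2$ the operator $(-\Delta)^k$ on $R^n$ has fundamental solution $a_{n,k}|x|^{-(n-2k)}$ with a constant $a_{n,k}>0$; hence for nonnegative $g$ the Riesz potential $\int_{R^n}|x-y|^{-(n-2k)}g(y)\,dy$ is, up to the factor $a_{n,k}^{-1}$, a solution of $(-\Delta)^k w=g$. The core claim is an equivalence: (\ref{1.2kc}) [resp.\ (\ref{1.1cs})] has a positive solution for some double bounded $c$ [resp.\ $c_1,c_2$] \emph{if and only if} (\ref{1.3c}) [resp.\ (\ref{1.3cs})] with $\alpha=2k$ does, and both directions preserve double boundedness, because the coefficients that get produced are, up to a dimensional constant, a power of the old one or a ratio of the form $u(x)\big/\bigl(\int_{R^n}|x-y|^{-(n-2k)}u^p(y)\,dy\bigr)$. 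Granting the equivalence, Corollary~\ref{coro1.6}(1) is exactly Theorem~\ref{th1.5}(1) with $\alpha=2k$ (note $\tfrac{n}{n-2k}>1$, so nothing is lost by assuming $p>1$), and Corollary~\ref{coro1.6}(2) is exactly Theorem~\ref{th1.5}(2) with $\alpha=2k$, since $\max\{\tfrac{2k(p+1)}{pq-1},\tfrac{2k(q+1)}{pq-1}\}<n-2k$ is precisely the condition appearing there.

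For the ``if'' (construction) direction, take a positive solution $(u,v)$ of (\ref{1.3cs}) with $\alpha=2k$ and double bounded $c_1,c_2$ supplied by Theorem~\ref{th1.5} (the scalar equation is the diagonal case $v=u$, $q=p$, $c_1=c_2$), and set $\tilde u(x)=\int_{R^n}|x-y|^{-(n-2k)}v^q(y)\,dy$ and $\tilde v(x)=\int_{R^n}|x-y|^{-(n-2k)}u^p(y)\,dy$. These are positive and finite (they equal $c_1^{-1}u$ and $c_2^{-1}v$), and applying $(-\Delta)^k$ gives $(-\Delta)^k\tilde u=a_{n,k}^{-1}v^q=a_{n,k}^{-1}c_2^q\,\tilde v^q$ and $(-\Delta)^k\tilde v=a_{n,k}^{-1}c_1^p\,\tilde u^p$, so $(\tilde u,\tilde v)$ solves (\ref{1.1cs}) with the double bounded coefficients $a_{n,k}^{-1}c_2^q$ and $a_{n,k}^{-1}c_1^p$, the required regularity following by elliptic bootstrapping. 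For the ``only if'' (nonexistence) direction, let $(u,v)$ be a positive solution of (\ref{1.1cs}) with double bounded $c_1,c_2$ and invoke the super-poly-harmonic property: under $pq>1$ (resp.\ $p>1$ in the scalar case) one has $(-\Delta)^ju\ge0$ and $(-\Delta)^jv\ge0$ for $0\le j\le k$. Writing $u_j=(-\Delta)^ju$ (so $u_k=c_1v^q$) and iterating the elementary $k=1$ comparison (on $B_R$, $u_j$ dominates the Green potential of $u_{j+1}$; send $R\to\infty$) yields $u_j=\kappa_j+a_{n,1}\int_{R^n}|x-y|^{-(n-2)}u_{j+1}(y)\,dy$ with constants $\kappa_j\ge0$. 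For $j\ge1$, $\kappa_j>0$ would give $u_{j-1}\ge a_{n,1}\kappa_j\int_{R^n}|x-y|^{-(n-2)}\,dy=+\infty$, impossible; hence $u=\kappa_0+a_{n,k}\int_{R^n}|x-y|^{-(n-2k)}c_1v^q$, and $\kappa_0=0$ as well, since $u\ge\kappa_0>0$ forces (directly when $v=u$, $q=p$, and through the $v$-equation otherwise) a Riesz potential majorizing $\int_{R^n}|x-y|^{-(n-2k)}\,dy=+\infty$, contradicting finiteness. Thus $u=a_{n,k}\int_{R^n}|x-y|^{-(n-2k)}c_1(y)v^q(y)\,dy$ and likewise for $v$; absorbing the factors $a_{n,k}c_1$, $a_{n,k}c_2$ into the outside coefficients $\hat c_1(x)=u(x)/\int_{R^n}|x-y|^{-(n-2k)}v^q$, $\hat c_2(x)=v(x)/\int_{R^n}|x-y|^{-(n-2k)}u^p$ — both double bounded since $c_1,c_2$ are — reduces the matter to the nonexistence part of Theorem~\ref{th1.5}(2).

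The main obstacle is the super-poly-harmonic property for $k\ge2$: this is the only genuinely nonelementary ingredient, and it is exactly where the hypotheses $p>1$ and $pq>1$ are consumed. I would establish it (or quote it, since it is already needed to prove Theorem~\ref{th1.1}) via the by-now-standard contradiction argument: if some $(-\Delta)^ju$ failed to be nonnegative, a rescaling together with the maximum principle for $-\Delta$ on large balls would force growth of $u$ or $v$ incompatible with the equations when $pq>1$ (cf.\ \cite{WX}, \cite{Lin}). A minor, routine point is the solution concept: since $c,c_1,c_2$ are merely double bounded, (\ref{1.2kc}) and (\ref{1.1cs}) are read distributionally, and the comparison and regularity steps above are carried out for such solutions, which causes no difficulty.
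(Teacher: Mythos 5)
Your proof is correct and follows essentially the same route as the paper: the paper's own argument (Corollary~\ref{coro2.4} and Corollary~\ref{coro3.2}) establishes the equivalence between the PDE and the HLS-type integral equation by invoking the super-polyharmonic property from \cite{LGZ} and the representation argument of \cite{ChLO}/\cite{CL-2010}, and then applies Theorem~\ref{th1.5} (= Theorem~\ref{th2.3}, Theorem~\ref{th3.1}) with $\alpha=2k$. You carry out the same reduction, simply supplying in more detail the transfer map $\tilde u=c_1^{-1}u$, $\tilde v=c_2^{-1}v$ in the existence direction and the $\kappa_j=0$ step in the integral-representation/nonexistence direction, which the paper leaves implicit.
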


\begin{theorem} \label{th1.7}

(1) The equation (\ref{1.4c})
$$
u(x)=c(x)W_{\beta,\gamma}(u^p)(x)
$$
has positive solutions for some double bounded $c(x)$, if and only if
$$
p>\frac{n(\gamma-1)}{n-\beta\gamma}.
$$

(2) The system
\begin{equation}
 \left \{
   \begin{array}{l}
      u(x)=c_1(x)W_{\beta,\gamma}(v^q)(x)\\
      v(x)=c_2(x)W_{\beta,\gamma}(u^p)(x).
   \end{array}
   \right.\label{1.4cs}          
 \end{equation}
has positive solutions $u,v$ for some double bounded $c_1(x)$ and $c_2(x)$,
if and only if $pq>(\gamma-1)^2$ and
$$\max\{\frac{\beta\gamma(p+\gamma-1)}{pq-(\gamma-1)^2},
\frac{\beta\gamma(q+\gamma-1)}{pq-(\gamma-1)^2}\} <
\frac{n-\beta\gamma}{\gamma-1}.
$$
\end{theorem}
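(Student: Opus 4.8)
The plan is to prove both parts by pairing a direct construction of explicit solutions in the "only if $\Rightarrow$ if" direction with a scaling/integrability obstruction in the "if $\Rightarrow$ only if" direction, exactly parallel to the argument for the HLS analogue in Theorem \ref{th1.5}. For the sufficiency direction of part (1), I would look for a radial solution of the form $u(x)=A(1+|x|^2)^{-\sigma/2}$ (or, when only a double-bounded $c$ is required, the simpler power ansatz $u(x)=A|x|^{-\sigma}$ on $|x|\ge 1$ patched to a constant near the origin), compute the Wolff potential $W_{\beta,\gamma}(u^p)(x)$ explicitly using the elementary estimate
\begin{equation}\label{wolffest}
W_{\beta,\gamma}(|y|^{-\tau})(x)\simeq |x|^{\frac{\beta\gamma-\tau}{\gamma-1}}
\quad\text{for }\beta\gamma<\tau<n,
\end{equation}
and then choose $\sigma$ so that $\frac{\beta\gamma-p\sigma}{\gamma-1}=-\sigma$, i.e.\ $\sigma=\frac{\beta\gamma}{p-(\gamma-1)}$. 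The constraint $\beta\gamma<p\sigma<n$ translates, after substituting this $\sigma$, precisely into $p>\frac{n(\gamma-1)}{n-\beta\gamma}$; in that regime $u$ and $W_{\beta,\gamma}(u^p)$ have the same homogeneity, so $c(x):=u(x)/W_{\beta,\gamma}(u^p)(x)$ is well defined, positive, and double bounded. That establishes the "if" part of (1). For the necessity direction I would argue that a positive solution forces decay: from $u=cW_{\beta,\gamma}(u^p)$ and the monotonicity of the Wolff potential one gets a lower bound $u(x)\ge c_0 |x|^{-\frac{n-\beta\gamma}{\gamma-1}}$ for large $|x|$ by plugging in the contribution of any fixed ball, and an analysis of the resulting integral $\int_0^\infty\big(t^{\beta\gamma-n}\int_{B_t(x)}u^p\big)^{1/(\gamma-1)}\,dt/t$ shows it diverges at $t=\infty$ (so no solution exists) unless $p$ is large enough to make $u^p$ integrable near infinity at the relevant rate; quantitatively, convergence at infinity of the Wolff integral together with the lower bound forces $p\cdot\frac{n-\beta\gamma}{\gamma-1}>n$, which is again $p>\frac{n(\gamma-1)}{n-\beta\gamma}$.

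For part (2) the construction in the sufficiency direction is the coupled version of the same ansatz: take $u(x)=A(1+|x|^2)^{-a/2}$, $v(x)=B(1+|x|^2)^{-b/2}$ and demand via \eqref{wolffest} that $W_{\beta,\gamma}(v^q)\simeq |x|^{-a}$ and $W_{\beta,\gamma}(u^p)\simeq |x|^{-b}$, which gives the linear system
\begin{equation}\label{linsys}
\frac{\beta\gamma-qb}{\gamma-1}=-a,\qquad \frac{\beta\gamma-pa}{\gamma-1}=-b.
\end{equation}
Solving \eqref{linsys} yields $a=\frac{\beta\gamma(q+\gamma-1)}{pq-(\gamma-1)^2}$ and the symmetric formula for $b$, which requires $pq>(\gamma-1)^2$ for positivity of the denominator; the admissibility constraints $\beta\gamma<qb<n$ and $\beta\gamma<pa<n$ (needed for \eqref{wolffest} to apply) reduce, after substitution, to exactly $\max\{\frac{\beta\gamma(p+\gamma-1)}{pq-(\gamma-1)^2},\frac{\beta\gamma(q+\gamma-1)}{pq-(\gamma-1)^2}\}<\frac{n-\beta\gamma}{\gamma-1}$. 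One then sets $c_1=u/W_{\beta,\gamma}(v^q)$ and $c_2=v/W_{\beta,\gamma}(u^p)$, both double bounded by matched homogeneity. For necessity, given a positive solution pair I would iterate the two Wolff inequalities against each other: the lower bound $v(x)\gtrsim\big(\int_{B_1}u^p\big)^{1/(\gamma-1)}$ bootstraps into a lower bound on $u$ through the second equation and vice versa, producing $u(x)\gtrsim|x|^{-a}$, $v(x)\gtrsim|x|^{-b}$ with $a,b$ as above; then finiteness of the Wolff integrals at infinity forces $pa>n$ and $qb>n$, which are precisely the two displayed inequalities, and combined with $pq>(\gamma-1)^2$ (forced by the requirement that \eqref{linsys} have a consistent positive solution) this gives the claimed condition.

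The main obstacle I expect is the necessity direction, specifically making the self-improving lower bound rigorous. The subtlety is that the elementary estimate \eqref{wolffest} only gives the exponent; to run the bootstrap one needs genuine two-sided control of the Wolff potential of a function known only to be positive and locally integrable, and one must be careful that the "far-field" lower bound obtained by restricting the Wolff integral to a fixed ball actually captures the critical decay rate rather than something slower. I would handle this by a decomposition of the Wolff integral over dyadic annuli in $t$, showing that the integrand near $t\sim |x|$ dominates and is controlled below by $u^p$ or $v^q$ on an annulus of comparable radius — this is where one needs, in effect, a weak Harnack-type lower bound on the averages $\int_{B_t(x)}u^p$, obtainable from the integral equation itself by an auxiliary monotonicity argument (the solution cannot oscillate too wildly because it is itself a Wolff potential). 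A secondary technical point is that near the origin the power ansatz degenerates, so for the sufficiency direction I would either use the smoothed version $(1+|x|^2)^{-\sigma/2}$ throughout (computing the Wolff potential of that requires a slightly more careful but still elementary estimate), or patch and absorb the mismatch near $|x|\le 1$ into the double-bounded coefficient, which is harmless since double-boundedness is only required globally. Everything else is bookkeeping with the algebraic identities \eqref{linsys}.
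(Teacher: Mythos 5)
Your sufficiency construction is essentially the paper's: insert the power ansatz $u(x)=(1+|x|^2)^{-\theta}$ into $W_{\beta,\gamma}(u^p)$, split the $t$-integral at $t\sim|x|$, and choose $2\theta=\frac{\beta\gamma}{p-\gamma+1}$ so that the potential reproduces the same decay, with the admissibility window $\beta\gamma<2p\theta<n$ translating into $p>\frac{n(\gamma-1)}{n-\beta\gamma}$; the coupled ansatz in part (2) yields the linear system \eqref{linsys} and the same bookkeeping. (The paper additionally records a fast-decaying solution, but that is extra.) So far so good.

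The necessity direction has a genuine gap, and it is quantitative, not merely a matter of rigor. You claim that the first-pass lower bound $u(x)\gtrsim|x|^{-a_0}$ with $a_0=\frac{n-\beta\gamma}{\gamma-1}$, combined with convergence of the Wolff integral at $t=\infty$, forces $pa_0>n$. It does not. Plugging $u^p\gtrsim|y|^{-pa_0}$ into $\int_{2|x|}^\infty\bigl(t^{\beta\gamma-n}\int_{B_t(x)}u^p\bigr)^{1/(\gamma-1)}\,dt/t$ gives an integrand $\sim t^{(\beta\gamma-pa_0)/(\gamma-1)}\,dt/t$, which diverges only when $pa_0\leq\beta\gamma$. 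So a single pass rules out only $p\leq\frac{\beta\gamma(\gamma-1)}{n-\beta\gamma}$, which is strictly weaker than the claimed threshold. In the intermediate range $\frac{\beta\gamma(\gamma-1)}{n-\beta\gamma}<p<\frac{n(\gamma-1)}{n-\beta\gamma}$ you get a new, \emph{larger} lower bound $u(x)\gtrsim|x|^{-a_1}$ with $a_1=\frac{pa_0-\beta\gamma}{\gamma-1}<a_0$, and you must iterate: the paper sets $a_j=\frac{p}{\gamma-1}a_{j-1}-\frac{\beta\gamma}{\gamma-1}$, shows the sequence is strictly decreasing under the subcritical assumption, solves the recursion explicitly (separately for $\frac{p}{\gamma-1}$ below, equal to, and above $1$), and shows $a_{j_0}\leq 0$ for some finite $j_0$, which makes $W_{\beta,\gamma}(u^p)(x)$ divergent. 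That iterative descent to a negative exponent is the missing idea; the "weak Harnack'' apparatus you anticipate needing is unnecessary, because the lower bounds come directly from restricting the Wolff integral to $t>2|x|$ and the ball $B_{t-|x|}(0)$.

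You also omit the borderline case $p=\frac{n(\gamma-1)}{n-\beta\gamma}$ (and the corresponding equality cases in part (2)), where the iteration stalls because $a_j$ converges to a nonnegative limit. The paper handles it separately by a H\"older inequality estimate on $\int_0^R\int_{B_t(x)}u^p\,dy\,dt$ giving $u(x)\gtrsim R^{-a_0}\bigl(\int_{B_{R/4}}u^p\bigr)^{1/(\gamma-1)}$, which upon integration over $B_{R/4}$ and the critical scaling forces $u\in L^p(R^n)$, and then an annulus version of the same estimate forces $\int u^p=0$, a contradiction. In part (2) the analogous argument (after cross-substituting the two lower bounds to close the loop in $v^q$) handles the equality case of the displayed condition. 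Finally, a small bookkeeping slip: after solving \eqref{linsys}, the admissibility conditions $\beta\gamma<pa<n$ and $\beta\gamma<qb<n$ are equivalent to $b<\frac{n-\beta\gamma}{\gamma-1}$ and $a<\frac{n-\beta\gamma}{\gamma-1}$ respectively, so you want $pa<n$ and $qb<n$, not $pa>n$ and $qb>n$ as written.
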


\begin{corollary} \label{coro1.8}
(1) If $p>\frac{n(\gamma-1)}{n-\gamma}$, then
\begin{equation} \label{1.5c}
-\Delta_\gamma u(x)=c(x)u^p(x), \quad x \in R^n
\end{equation}
has positive solutions for some double bounded $c(x)$.
If $0<p \leq \frac{n(\gamma-1)}{n-\gamma}$, then for any double bounded
$c(x)$, (\ref{1.5c}) has no
positive solution satisfying $\inf_{R^n}u=0$.

(2) If $pq>(\gamma-1)^2$ and
$\max\{\frac{\gamma(q+\gamma-1)}{pq-(\gamma-1)^2},
\frac{\gamma(p+\gamma-1)}{pq-(\gamma-1)^2}\}<\frac{n-\gamma}{\gamma-1}$,
then there exist positive solutions $u,v$ of the $\gamma$-Laplace
system
\begin{equation}
 \left \{
   \begin{array}{l}
      -\Delta_\gamma u(x)=c_1(x)v^q(x), \quad x \in R^n,\\
      -\Delta_\gamma v(x)=c_1(x)u^p(x), \quad x \in R^n
   \end{array}
   \right.   \label{1.5cs}     
 \end{equation}
for some double bounded $c_1(x)$ and $c_2(x)$. On the contrary,
for any double bounded functions $c_1(x)$ and $c_2(x)$, if one of the
following conditions holds

(i) $0<pq \leq (\gamma-1)^2$;

(ii) $pq>(\gamma-1)^2$ and
$$
\max\{\frac{\gamma(q+\gamma-1)}{pq-(\gamma-1)^2},
\frac{\gamma(p+\gamma-1)}{pq-(\gamma-1)^2}\} \geq \frac{n-\gamma}{\gamma-1}.
$$
Then (\ref{1.5cs}) has no positive solutions $u,v$ satisfying
$\inf_{R^n}u=\inf_{R^n}v=0$.
\end{corollary}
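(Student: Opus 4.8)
The plan is to derive Corollary \ref{coro1.8} from Theorem \ref{th1.7} by exploiting the Kilpeläinen--Malý--Kuusi-type equivalence between the $\gamma$-Laplace equation and the Wolff potential integral equation with a double bounded coefficient, exactly as described in the introduction via \cite{KM}. So the first step is to record this equivalence carefully in the system setting: if $(u,v)$ is a positive solution of the $\gamma$-Laplace system (\ref{1.5cs}) with $\inf_{R^n}u=\inf_{R^n}v=0$, then there are double bounded functions $\tilde c_1, \tilde c_2$ such that $u=\tilde c_1 W_{1,\gamma}(v^q)$ and $v=\tilde c_2 W_{1,\gamma}(u^p)$, i.e. $(u,v)$ solves the Wolff system (\ref{1.4cs}) with $\beta=1$ and these coefficients; conversely, a solution of the Wolff system with $\beta=1$ and double bounded coefficients produces a solution of the $\gamma$-Laplace system (here one may need to absorb the coefficient into $c_1,c_2$ on the right-hand side, using that $W_{1,\gamma}(cf)$ is comparable to $W_{1,\gamma}(f)$ when $c$ is double bounded, so the class of solvable coefficients is unchanged). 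The condition $\inf u = \inf v = 0$ is what makes the lower Wolff bound in \cite{KM} available, so it must be carried through the statement — which it is.

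With that equivalence in hand, the two parts follow by specializing Theorem \ref{th1.7} to $\beta=1$. For part (1): by Theorem \ref{th1.7}(1), the Wolff equation $u=cW_{1,\gamma}(u^p)$ has a positive solution for some double bounded $c$ if and only if $p>\frac{n(\gamma-1)}{n-\gamma}$. Hence if $p>\frac{n(\gamma-1)}{n-\gamma}$ one takes the Wolff solution and converts it to a solution of (\ref{1.5c}); and if $0<p\le\frac{n(\gamma-1)}{n-\gamma}$, any positive solution of (\ref{1.5c}) with $\inf_{R^n}u=0$ would, by the equivalence, give a double bounded $c$ for which the Wolff equation is solvable, contradicting Theorem \ref{th1.7}(1). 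For part (2): with $\beta=1$ the conditions in Theorem \ref{th1.7}(2) become precisely $pq>(\gamma-1)^2$ together with $\max\{\frac{\gamma(p+\gamma-1)}{pq-(\gamma-1)^2},\frac{\gamma(q+\gamma-1)}{pq-(\gamma-1)^2}\}<\frac{n-\gamma}{\gamma-1}$, so the existence half is immediate from the forward direction of the equivalence, and the nonexistence half — cases (i) $0<pq\le(\gamma-1)^2$ and (ii) $pq>(\gamma-1)^2$ with the reversed inequality — is exactly the negation of the Wolff solvability condition, so a $\gamma$-Laplace solution with $\inf u=\inf v=0$ would again contradict Theorem \ref{th1.7}(2).

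The one genuine subtlety, and the step I expect to be the main obstacle, is the rigorous justification of the equivalence in the system case and in particular keeping track of what "for some double bounded $c_1,c_2$" means on each side. In Theorem \ref{th1.7} the coefficients sit in front of the Wolff potentials, whereas in the $\gamma$-Laplace system they sit in front of $v^q$ and $u^p$; passing between the two requires the observation that for double bounded $c$ one has $W_{1,\gamma}(c\, f) \asymp W_{1,\gamma}(f)$ pointwise (since $\gamma-1>0$, the map $t\mapsto t^{1/(\gamma-1)}$ respects two-sided bounds and the $t$-integration is monotone), so that solvability of the $\gamma$-Laplace system for \emph{some} double bounded pair is equivalent to solvability of the $\beta=1$ Wolff system for \emph{some} double bounded pair. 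One must also check that the solutions produced are genuine positive classical (or at least $C^1$/weak) solutions of the $\gamma$-Laplace system and conversely that a $\gamma$-Laplace solution is locally integrable enough for the Wolff potential to be finite — these are standard consequences of the regularity theory for the $\gamma$-Laplacian and of the structure of (\ref{1.5cs}), but they should be cited. Once these bookkeeping points are settled, the corollary is a direct translation of Theorem \ref{th1.7}.
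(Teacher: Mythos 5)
Your nonexistence argument is exactly the paper's: a positive $\gamma$-Laplace solution with $\inf_{R^n} u = 0$ (resp.\ $\inf u = \inf v = 0$) satisfies the two-sided Kilpel\"ainen--Mal\'y estimate from \cite{KM}, so $K(x) = u(x)/W_{1,\gamma}(u^p)(x)$ (and similarly $K_1$, $K_2$ in the system case) is double bounded, and the nonexistence reduces to the $\beta=1$ case of Theorem \ref{th1.7} (i.e.\ Theorems \ref{th2.5} and \ref{th3.3}). That half is fine.

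The existence half has a genuine gap. You propose to take the Wolff solution from Theorem \ref{th1.7} and ``convert it to a solution of (\ref{1.5c})'', but the Kilpel\"ainen--Mal\'y result is a one-way implication: it estimates solutions of $-\Delta_\gamma u = f$ in terms of $W_{1,\gamma}(f)$, and says nothing in the reverse direction. A function satisfying $u = c\, W_{1,\gamma}(u^p)$ is not thereby a solution of any $\gamma$-Laplace PDE --- $W_{1,\gamma}$ is a nonlinear potential, not an exact inverse of $-\Delta_\gamma$, and your remark that $W_{1,\gamma}(cf) \asymp W_{1,\gamma}(f)$ only handles absorbing coefficients, not this directionality. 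The paper does not attempt any such conversion. In Theorem \ref{th2.6}(1) and Theorem \ref{th3.4}(1) the existence is obtained by a direct explicit construction: take $u(x) = (1+|x|^m)^{-\theta}$ with $m = \gamma/(\gamma-1)$ (note this is a different ansatz than the $(1+|x|^2)^{-\theta}$ used in the Wolff setting), compute $-\Delta_\gamma u$ in closed form as in (\ref{2.3}), and observe that with $m\theta = \gamma/(p-\gamma+1)$ the quantity $(-\Delta_\gamma u)/u^p$ is double bounded precisely when $n > (\theta+1)\gamma$, i.e.\ when $p > \frac{n(\gamma-1)}{n-\gamma}$; the system case is the same with two exponents $\theta_1, \theta_2$. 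You should replace your existence argument by this direct verification.
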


\paragraph{Remark 1.5.}
Comparing with Theorem \ref{th1.2}-Theorem \ref{th1.4},
we obtain, from Theorem \ref{th1.5}-Corollary \ref{coro1.8},
other divided conditions on the existence of the positive solutions
of the equations and systems with ratio coefficients $c(x)$, $c_1(x)$ and $c_2(x)$.
These divided conditions are called {\it the secondary critical conditions}.
The secondary critical conditions are more relaxed than those in Theorem \ref{th1.2}
-Theorem \ref{th1.4} because the solutions classes of
the equations and systems with ratio coefficients
are larger than that in the case of $c(x)\equiv Constant$.

\vskip 3mm

In the proofs of Theorem \ref{th1.5}-Corollary \ref{coro1.8}, we apply a special
iteration scheme and some critical asymptotic analysis to establish the existence
and the nonexistence, and hence obtain the sharp criteria.

The contents of this paper are as follows. In Section 2, we prove Theorem \ref{th1.5} (1),
Corollary \ref{coro1.6} (1), Theorem \ref{th1.7} (1) and Corollary \ref{coro1.8} (1).
Theorem \ref{th1.5} (2), Corollary \ref{coro1.6} (2), Theorem \ref{th1.7} (2)
and Corollary \ref{coro1.8} (2) are proved
in Section 3. In Section 4.2, we prove (1) of Theorem \ref{th1.2}, which
covers (1) of Corollary \ref{coro1.3}. The proof of of Theorem \ref{th1.4}
is given in Section 4.3. In Section 5, we give the
proofs of (2) of Theorem \ref{th1.2} and (2) of Corollary \ref{coro1.3}.
The argument on Theorem \ref{th1.1} is given in Sections 6 (see Remark 6.1).

\section{Equations with variable coefficients}

The following proposition is often used in this paper.

\begin{proposition} \label{prop2.1}
If $w \in L^1(R^n)$, then we can find $R_j \to \infty$ such that
$$
R_j\int_{\partial B_{R_j}(0)}|w|ds \to 0 \quad and \quad
R_j^n\int_{S^{n-1}}|w|ds \to 0.
$$
Here $S^{n-1}=\partial B_1(0)$.
\end{proposition}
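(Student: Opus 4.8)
\emph{Proof proposal.} The plan is to reduce the two-dimensional-looking statement to a single one-dimensional fact about the radial distribution of $|w|$, and then exploit that $w\in L^1(R^n)$ while $1/r$ fails to be integrable at infinity.

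First I would pass to polar coordinates. Writing $y=r\theta$ with $r=|y|>0$ and $\theta\in S^{n-1}$, Tonelli's theorem gives
$$
\int_{R^n}|w(y)|\,dy=\int_0^\infty\Big(\int_{S^{n-1}}|w(r\theta)|\,d\theta\Big)r^{n-1}\,dr=\int_0^\infty g(r)\,dr,
$$
where
$$
g(r):=\int_{\partial B_r(0)}|w|\,ds=r^{n-1}\int_{S^{n-1}}|w(r\theta)|\,d\theta
$$
is defined and finite for a.e.\ $r>0$ and belongs to $L^1(0,\infty)$. The change of variables $y=R\theta$ on $\partial B_R(0)$ also shows
$$
R\int_{\partial B_R(0)}|w|\,ds=R^n\int_{S^{n-1}}|w(R\theta)|\,d\theta,
$$
so the two quantities appearing in the statement are in fact equal (the second integral being read with $|w|$ evaluated at $R_j\theta$); hence it suffices to produce $R_j\to\infty$ with $R_j\,g(R_j)\to0$.

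Next I would argue by contradiction. If no such sequence exists, then $0$ is not a subsequential limit of $r\,g(r)$ as $r\to\infty$, so $\liminf_{r\to\infty}r\,g(r)>0$; thus there are $\delta>0$ and $R_0>0$ with $g(r)\ge\delta/r$ for a.e.\ $r\ge R_0$. Integrating,
$$
\int_{R^n}|w|\,dy\ge\int_{R_0}^\infty g(r)\,dr\ge\delta\int_{R_0}^\infty\frac{dr}{r}=+\infty,
$$
contradicting $w\in L^1(R^n)$. Therefore $\liminf_{r\to\infty}r\,g(r)=0$, and one may pick $R_j\to\infty$ among the radii where $g$ is finite with $R_j\,g(R_j)\to0$; by the identity above this is precisely the assertion.

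This argument is essentially routine; there is no genuinely hard step. The only points requiring a little care are the measure-theoretic bookkeeping — that $g(r)<\infty$ off a null set, so the sequence $R_j$ can always be extracted — and the convention by which the second displayed limit is interpreted, namely $|w|$ restricted to $\partial B_{R_j}(0)$ and rescaled to $S^{n-1}$.
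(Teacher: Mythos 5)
Your proof is correct and rests on the same essential observation as the paper's: after passing to polar coordinates, the function $g(r)=\int_{\partial B_r(0)}|w|\,ds$ lies in $L^1(0,\infty)$, while $1/r$ fails to be integrable at infinity, so $rg(r)$ cannot stay bounded away from zero. You also correctly note that the two displayed quantities in the proposition are equal (via $ds=R^{n-1}d\theta$ on $\partial B_R(0)$), so only one limit actually needs proving.

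The only cosmetic difference is bookkeeping: the paper localizes the argument to dyadic annuli, observing that $\int_R^{2R}g(r)\,dr\ge\bigl(\inf_{[R,2R]}rg(r)\bigr)\log 2$, so that a good radius $R_j$ can be found in every annulus $[R,2R]$ once $R$ is large. You instead run a global $\liminf$/contradiction argument over $(R_0,\infty)$. Both versions hinge on $\int^{\infty}dr/r=\infty$ and give the same conclusion; the paper's dyadic version yields the slight refinement that the good radii $R_j$ may be chosen in each dyadic interval, which is sometimes convenient downstream but not needed for the statement as written.
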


\begin{proof}
In view of $\|w\|_{L^1(R^n)}<\infty$,
it follows from the definition of the improper integral that
$$
\lim_{R \to \infty}\int_{B_{2R}(0) \setminus B_R(0)}|w|dx =0.
$$
Hence, as $R \to \infty$,
$$
\inf_{[R,2R]}(r\int_{\partial B_r(0)}|w|ds) \to 0 \quad
and \quad \inf_{[R,2R]}(r^n\int_{S^{n-1}}|w|ds) \to 0.
$$
There exist $R_j \in [R,2R]$, such that as $R_j \to \infty$,
$$
R_j\int_{\partial B_{R_j}(0)}|w|ds \to 0 \quad and \quad
R_j^n\int_{S^{n-1}}|w|ds \to 0.
$$
\end{proof}

\subsection{HLS type integral equation}

In this subsection, we give a relation between the exponents and the
existence of positive solutions for integral equations involving the
Riesz potentials. First we consider the semilinear Lane-Emden type equations
\begin{equation}
-\Delta u(x)=c(x)u^p(x), \quad x \in R^n.
\label{LE}
\end{equation}

\begin{theorem} \label{th2.2}
Let $p \geq 1$. Then
(\ref{LE}) has a positive solution for some double bounded $c(x)$,
if and only if
$p > \frac{n}{n-2}$.
\end{theorem}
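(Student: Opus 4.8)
The plan is to split the statement into a nonexistence part and an existence part. For the nonexistence direction, suppose $1 \le p \le \frac{n}{n-2}$ and that (\ref{LE}) has a positive solution $u$ for some double bounded $c(x)$. The key observation is that double boundedness means $c(x)u^p(x) \ge \frac1C u^p(x) > 0$, so $-\Delta u \ge \frac1C u^p$ with $u>0$; in particular $u$ is superharmonic. I would first record the standard consequence that a positive superharmonic function on $R^n$ has a spherical-average lower bound: setting $\bar u(r) = \fint_{\partial B_r(0)} u\, ds$, the function $\bar u$ is nonincreasing, hence $u$ is bounded below near infinity only in the weak averaged sense, but more usefully $\bar u(r) \ge c\, r^{-(n-2)} \int_{B_1} (-\Delta u)$ by the representation via the Green/Newtonian potential. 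Combining this with $-\Delta u \ge \frac1C u^p$ gives a lower bound $u(x) \gtrsim (1+|x|)^{-(n-2)}$ after integrating, which then forces $\int_{R^n} u^p \gtrsim \int (1+|x|)^{-p(n-2)}\,dx$. This integral diverges exactly when $p(n-2) \le n$, i.e. $p \le \frac{n}{n-2}$. On the other hand, testing the equation (integrating $-\Delta u = c u^p$ over $B_R$ and using Proposition \ref{prop2.1} to kill the boundary term along a good sequence $R_j\to\infty$) shows $\int_{R^n} c u^p < \infty$, hence $\int_{R^n} u^p < \infty$ by double boundedness — a contradiction. The borderline case $p = \frac{n}{n-2}$ needs slightly more care: there the logarithmic divergence of $\int (1+|x|)^{-n}\,dx$ still gives the contradiction, so no separate argument is needed beyond being careful with the exponent.

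For the existence direction, assume $p > \frac{n}{n-2}$. The idea is to produce an explicit-type solution of the form $u(x) = (1+|x|^2)^{-\tau/2}$ (equivalently $u \sim |x|^{-\tau}$ at infinity) for a suitable exponent $\tau \in (0, n-2)$, and then show that $-\Delta u / u^p$ is double bounded. Choosing $\tau = \frac{2}{p-1}$, a direct computation gives $-\Delta u \asymp (1+|x|^2)^{-\tau/2 - 1}$ and $u^p \asymp (1+|x|^2)^{-\tau p/2}$; the ratio $c(x) := -\Delta u(x)/u^p(x)$ is then comparable to a positive constant precisely when $\tau p = \tau + 2$, which is exactly our choice $\tau = \frac{2}{p-1}$. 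One must check $\tau \in (0, n-2)$: positivity is clear since $p>1$, and $\tau < n-2$ is equivalent to $\frac{2}{p-1} < n-2$, i.e. $p > \frac{n}{n-2}$, matching the hypothesis exactly (this is the reason the threshold appears). Finally one verifies that this $c(x)$ is indeed bounded above and below away from $0$ — that requires checking that $-\Delta u > 0$ everywhere, which holds because $u$ is a radial, strictly decreasing profile that is strictly superharmonic (the sign of $-\Delta u$ for $u = (1+|x|^2)^{-\tau/2}$ with $0<\tau<n-2$ is positive, a one-line computation).

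I would present the existence half via the integral-equation formulation to match the rest of Section 2: rather than differentiating a guessed profile, represent candidate solutions as $u = I_2(c u^p)$ (Newtonian potential) and observe that for $u(x) = (1+|x|^2)^{-\tau/2}$ one has $u = \lambda\, I_2(u^p)$-type identities up to a double bounded factor, using known Riesz-potential computations of $\int_{R^n} |x-y|^{-(n-2)} (1+|y|^2)^{-\tau p/2}\,dy$. Either route is routine once the exponent bookkeeping is fixed. The main obstacle I anticipate is the nonexistence borderline case $p=\frac{n}{n-2}$ and, more generally, making the lower bound $u(x) \gtrsim (1+|x|)^{-(n-2)}$ rigorous without extra regularity hypotheses on $c$ — the cleanest fix is to use the superharmonicity of $u$ together with the Poisson/Green representation on large balls and the fact, from Proposition \ref{prop2.1}, that one can choose radii along which boundary integrals vanish; this simultaneously yields the finiteness $\int_{R^n} c u^p < \infty$ and the pointwise lower bound, and the two are contradictory in the whole range $1\le p\le \frac{n}{n-2}$.
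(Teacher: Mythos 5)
Your existence half is essentially the paper's Step~1: you take $u(x)=(1+|x|^2)^{-\tau/2}$ with $\tau=2/(p-1)$, compute $-\Delta u$, and observe the ratio is double bounded; the condition $\tau<n-2$ is exactly $p>\frac{n}{n-2}$. That is correct and matches the paper.

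The nonexistence half has a genuine gap at the key step. You claim that integrating $-\Delta u=cu^p$ over $B_R$ and ``killing the boundary term via Proposition~\ref{prop2.1}'' yields $\int_{R^n}cu^p\,dx<\infty$. But Proposition~\ref{prop2.1} can only be applied to a function already known to be in $L^1(R^n)$; you have no a priori control on $\int_{\partial B_R}|\nabla u|\,ds$, and in fact for a positive superharmonic $u$ the total mass $\int_{R^n}(-\Delta u)\,dx$ can be infinite (e.g.\ $u=(1+|x|^2)^{-\theta}$ with small $\theta>0$). What the Green-function argument actually gives --- and this is what the paper proves in (\ref{ghj}) --- is only the \emph{weighted} finiteness $\int_{R^n}\frac{c(y)u^p(y)}{|x_0-y|^{n-2}}\,dy<\infty$, because the favorable sign of $\partial_\nu\phi$ controls the boundary term without any $L^1$ hypothesis. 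Once you replace your unweighted bound by the correct weighted one, your contradiction with the one-step lower bound $u\gtrsim|x|^{-(n-2)}$ only covers $p\le\frac{2}{n-2}$: the weighted integral $\int_{|y|>R}|y|^{-p(n-2)}\cdot|y|^{-(n-2)}\,dy$ diverges only when $(p+1)(n-2)\le n$. The remaining range $\frac{2}{n-2}<p\le\frac{n}{n-2}$ is exactly what the paper's iteration $a_j=pa_{j-1}-2$ (inside the proof of Theorem~\ref{th2.3}) is designed to reach: each iterate improves the pointwise lower bound until $a_{j_0}<0$, forcing a contradiction in the integral representation, and a separate self-improving integral inequality handles the borderline $p=\frac{n}{n-2}$. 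Your proposal needs this iteration (or an equivalent bootstrap); the one-step lower bound plus testing does not close the argument.
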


\begin{proof}
{\it Step 1.}
If $p > \frac{n}{n-2}$,
we claim that (\ref{LE}) has the special solution as follows
\begin{equation} \label{EXP}
u(x)=\frac{1}{(1+|x|^2)^\theta},
\end{equation}
where $\theta>0$ will be determined later.

Denote $|x|$ by $r$, and set $U(r)=U(|x|)=u(x)$.
By a simply calculation, we obtain
\begin{equation} \label{2.0}
-\Delta u=-U_{rr}-\frac{n-1}{r}U_r
=\frac{2\theta}{(1+r^2)^{\theta+1}}
\frac{(n-2-2\theta)r^2+n}{1+r^2}.
\end{equation}

Take $\theta=\frac{1}{p-1}$. Then $n-2-2\theta>0$ and
$$
-\Delta u=\frac{c(r)}{(1+r^2)^{\theta+1}}=c(r)u^{1+1/\theta}=c(r)u^p.
$$
Namely, (\ref{EXP}) with the slow rate $2\theta=\frac{2}{p-1}$
is a solution for some double bounded $c(r)$.

Moreover, if $p=\frac{n+2}{n-2}$, there also exists a fast
decaying solution with rate $2\theta=n-2$. Now,
$$
-\Delta u=\frac{c(r)}{(1+r^2)^{\theta+2}}=c(r)u^{1+2/\theta}=c(r)u^p.
$$
Namely, (\ref{EXP}) with the fast rate $2\theta=n-2$
is a solution for some double bounded $c(r)$.

{\it Step 2.} We prove (\ref{LE}) has no positive solution when
$1 \leq p \leq \frac{n}{n-2}$.

Otherwise, let $u$ be a positive solution. Take $x_0 \in R^n$ and
denote $B_R(x_0)$ by $B$. Let
$$
\phi(x)=\phi_R(x)=c_R(\frac{1}{|x-x_0|^{n-2}} -\frac{1}{R^{n-2}}),
$$
where $0<c_R \to c_* \in (0,\infty)$ as $R \to \infty$. Then,
$\phi$ solves
$$
\left \{
   \begin{array}{l}
      -\Delta \phi(x)=\delta(x), \quad x \in B,\\
      \phi=0, \quad on ~\partial B.
      \end{array}
   \right.
$$
Here $\delta$ is a Dirac function at $x_0$. Then,
\begin{equation} \label{ghj}
\begin{array}{ll}
&\displaystyle\int_B c(x)u^p(x)\phi(x) dx=-\int_B\phi \Delta udx
=\int_B\nabla u \nabla \phi dx\\[3mm]
&=\displaystyle\int_{\partial B}u\partial_{\nu}\phi ds-\int_Bu\Delta\phi dx
=\displaystyle\int_{\partial B}u\partial_{\nu}\phi ds+u(x_0).
\end{array}
\end{equation}
Here $\nu$ is the unit outward normal vector on $\partial B$.
Noting $\partial_{\nu} \phi<0$, we have
$$
\int_B u^p\phi dx \leq cu(x_0)<\infty.
$$
Let $R \to \infty$, there holds
$$
\int_{R^n}\frac{u^p(y)dy}{|x_0-y|^{n-2}}<\infty.
$$
According to Proposition \ref{prop2.1},
there exists $R_j \to \infty$ (we still denote it by $R$)
such that
$$
R\int_{\partial B}\frac{u^p(y)ds}{R^{n-2}} \to 0.
$$
Thus, noting $p \geq 1$, we can use the H\"older inequality to deduce
that
$$
|\int_{\partial B}u \partial_{\nu} \phi ds|
\leq \frac{c}{R^{n-1}}\int_{\partial B}uds \leq
\frac{c}{R^{2/p}}(R\int_{\partial B}\frac{u^p(y)ds}{R^{n-2}})^{1/p}
\to 0,
$$
when $R \to \infty$. Let $R \to \infty$ in (\ref{ghj}), then
$$
u(x_0)=c\int_{R^n}\frac{c(y)u^p(y)dy}{|x_0-y|^{n-2}}.
$$

Write $w(x)=c^{1/p}(x)u(x)$, then $w$ solves
the integral equation involving the Newton potential
$$
w(x)=c^{1/p}(x)\int_{R^n}\frac{w^p(y)dy}{|x-y|^{n-2}}.
$$
However, this integral equation has no positive solution for
any double bounded $c$ when $0<p \leq \frac{n}{n-2}$.
The proof is a special case of the corresponding proof of
Theorem \ref{th2.3}, which handles a more general integral
equation involving the Riesz potential.
\end{proof}

\begin{theorem} \label{th2.3}
The HLS type integral equation
\begin{equation}
u(x)=c(x)\int_{R^n}\frac{u^p(y)dy}{|x-y|^{n-\alpha}}
\label{HLS}
\end{equation}
has a positive solution for some double bounded $c(x)$,
if and only if
\begin{equation} \label{cc1}
p > \frac{n}{n-\alpha}.
\end{equation}
\end{theorem}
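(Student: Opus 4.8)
The plan is to mirror the structure of the proof of Theorem \ref{th2.2}, treating the sufficiency and the necessity separately. For the sufficiency direction, suppose $p>\frac{n}{n-\alpha}$. I would try the explicit ansatz $u(x)=(1+|x|^2)^{-\theta}$ with $\theta=\frac{\alpha}{p-1}>0$, which is the natural slow-decay candidate making $u^p$ behave like $(1+|x|^2)^{-(\theta+\alpha/2)\cdot\frac{2p}{2}}$... more precisely $u^p(x)\sim|x|^{-2\theta p}=|x|^{-2\theta-2\alpha\theta/\alpha}$. The key computation is to evaluate the Riesz potential $\int_{R^n}\frac{u^p(y)\,dy}{|x-y|^{n-\alpha}}$ and show it is comparable to $u(x)$, i.e. bounded above and below by constant multiples of $(1+|x|^2)^{-\theta}$; then $c(x):=u(x)/\int_{R^n}\frac{u^p(y)\,dy}{|x-y|^{n-\alpha}}$ is the desired double bounded coefficient. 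The condition $p>\frac{n}{n-\alpha}$ is exactly what guarantees $2\theta p=\frac{2\alpha p}{p-1}>n$, so that $u^p\in L^1(R^n)$ near infinity and the convolution converges and decays at the correct rate; this asymptotic matching is the main point of the sufficiency half. One should note (as in Theorem \ref{th2.2}) that in the critical-type case one may also have a fast-decaying solution, but for the ``if'' direction the slow solution already suffices.

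For the necessity direction, suppose $u$ is a positive solution of (\ref{HLS}) for some double bounded $c(x)$; I must show $p>\frac{n}{n-\alpha}$. Setting $w(x)=c^{1/p}(x)u(x)$, which is again double bounded relative to $u$, reduces the problem to showing that the ``pure'' integral equation $w(x)=\int_{R^n}\frac{w^p(y)\,dy}{|x-y|^{n-\alpha}}$ has no positive solution when $0<p\le\frac{n}{n-\alpha}$; this is the statement the proof of Theorem \ref{th2.2} deferred to here. The standard approach is an integral estimate: from the equation, for $|x|\le R$ one has $w(x)\ge\int_{B_R(0)}\frac{w^p(y)\,dy}{|x-y|^{n-\alpha}}\gtrsim R^{\alpha-n}\int_{B_R(0)}w^p\,dy$. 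Iterating, or combining with a lower bound $w(x)\ge c|x|^{\alpha-n}$ for large $|x|$ (obtained by restricting the integral to a fixed ball where $\int w^p>0$), one gets $w(y)\gtrsim|y|^{\alpha-n}$, hence $\int_{B_R}w^p\gtrsim\int_1^R r^{(\alpha-n)p+n-1}\,dr$. When $p\le\frac{n}{n-\alpha}$ we have $(\alpha-n)p+n\ge 0$, so this integral grows at least like a positive power of $R$ (or like $\log R$ in the borderline case), forcing $\int_{R^n}w^p=\infty$; but then plugging back gives $w(x)=\infty$, a contradiction. The crux is setting up this integral-iteration bootstrap cleanly and handling the borderline exponent $p=\frac{n}{n-\alpha}$ where the divergence is only logarithmic.

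The main obstacle I anticipate is the necessity part, specifically making the lower-bound bootstrap rigorous: one must first secure the initial decay estimate $w(x)\gtrsim|x|^{\alpha-n}$ at infinity (which needs $\int_{B_1}w^p>0$, true since $w>0$ and continuous), then feed it through the potential to improve it, and carefully track that in the subcritical/critical range the resulting integral $\int_{R^n}w^p$ genuinely diverges rather than just being large. A Pohozaev-type or scaling-type identity could be an alternative route, but the integral-estimate method is more robust for the ``double bounded $c$'' generality and matches the paper's stated techniques (``a special iteration scheme and some critical asymptotic analysis''). For the sufficiency part, the only technical care needed is the two-sided asymptotic estimate of the Riesz potential of $(1+|y|^2)^{-\theta p}$, which is routine once one splits $R^n$ into the regions $|y|\le|x|/2$, $|y-x|\le|x|/2$, and the complement.
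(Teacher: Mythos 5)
Your high-level plan matches the paper — explicit power-law ansatz for sufficiency, lower-bound iteration for necessity, and the reduction $w=c^{1/p}u$ is exactly what the paper does to link Theorem \ref{th2.2} to Theorem \ref{th2.3}. But there are concrete errors in both halves.

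For sufficiency, your exponent is off by a factor of two: the ansatz $u=(1+|x|^2)^{-\theta}$ decays like $|x|^{-2\theta}$, and matching the Riesz potential of $u^p$ (which behaves like $|x|^{\alpha-2p\theta}$ in the relevant regime) against $u$ forces $\alpha-2p\theta=-2\theta$, i.e.\ $2\theta=\frac{\alpha}{p-1}$, not $\theta=\frac{\alpha}{p-1}$. More importantly, the slow-decay solution does \emph{not} satisfy $2p\theta>n$; the paper's condition is $\alpha<2p\theta<n$, so $u^p$ is \emph{not} in $L^1$ at infinity. The convolution still converges because the kernel contributes decay: $\int_{|y|>|x|}|y|^{-2p\theta}|x-y|^{\alpha-n}\,dy$ converges once $2p\theta>\alpha$. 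Requiring $2p\theta>n$ would instead lead you to the fast-decay solution $2\theta=n-\alpha$, which the paper also records but which is a separate branch. The inequality $p>\frac{n}{n-\alpha}$ enters precisely to ensure $2p\theta<n$, so the $|y|\le|x|/2$ contribution stays of order $|x|^{\alpha-2p\theta}$ rather than $|x|^{\alpha-n}$.

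For necessity, your ``plug $\int w^p=\infty$ back to get $w=\infty$'' step is a genuine gap. From $w\gtrsim|y|^{\alpha-n}$ alone, $\int_{R^n}w^p=\infty$ whenever $p\le\frac{n}{n-\alpha}$, but the Riesz potential $\int\frac{w^p(y)\,dy}{|x-y|^{n-\alpha}}$ only diverges when $(n-\alpha)p\le\alpha$, i.e.\ $p\le\frac{\alpha}{n-\alpha}$. Similarly $w(x)\ge cR^{\alpha-n}\int_{B_R}w^p\gtrsim R^{\alpha-(n-\alpha)p}$ blows up only for $p<\frac{\alpha}{n-\alpha}$. So the one-shot argument covers only the low range; for $\frac{\alpha}{n-\alpha}<p<\frac{n}{n-\alpha}$ you must actually carry out the iteration $a_0=n-\alpha$, $a_j=pa_{j-1}-\alpha$, track that $\{a_j\}$ is strictly decreasing, and show $a_{j_0}<0$ for some $j_0$; only a pointwise lower bound with negative exponent produces $w(x)\ge c\int_{|y|>R}|y|^{-pa_{j_0}}|x-y|^{\alpha-n}\,dy=\infty$. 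Finally, the critical case $p=\frac{n}{n-\alpha}$ cannot be handled by this iteration at all: $a_0=n-\alpha$ is a fixed point of $a\mapsto pa-\alpha$, so the lower bound never improves and the logarithmic divergence you mention does not feed back into the potential. The paper devotes a separate Step 3 to it, using $u(x)\ge(R+|x|)^{\alpha-n}\int_{B_R}u^p$ raised to the $p$-th power to get $\int_{B_R}u^p\ge c(\int_{B_R}u^p)^p$ with $c$ independent of $R$, hence $u\in L^p(R^n)$, and then integrating the same bound over the annulus $B_{2R}\setminus B_R$ to force $\int u^p=0$. This superlinear-reabsorption plus annulus argument is a genuinely different mechanism and is the missing piece in your outline.
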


\begin{proof}
When $|x| \leq 2R$ for some $R>0$, $u(x)$ is proportional to $\int_{R^n}
|x-y|^{\alpha-n}u^p(y)dy$. Thus, we only consider the case of $|x|>2R$.

{\it Step 1.}
Inserting (\ref{EXP}) into the right hand side of (\ref{HLS}), we can find
some double bounded function $c(x)$ such that as $|x|>2R$ for some $R>0$,
$$\begin{array}{ll}
\displaystyle\int_{R^n}\frac{u^p(y)dy}{|x-y|^{n-\alpha}}
&=\displaystyle\frac{c(x)}{(1+|x|^2)^{(n-\alpha)/2}}\int_{B_R(0)}\frac{dy}{(1+|y|^2)^{p\theta}}\\[3mm]
&\quad +\displaystyle\frac{c(x)}{(1+|x|^2)^{p\theta}}\int_{B_{|x|/2}(x)}\frac{dy}{|x-y|^{n-\alpha}}\\[3mm]
&\quad +c(x)\displaystyle\int_{B_R^c(0)\setminus B_{|x|/2}(x)}\frac{dy}{|x-y|^{n-\alpha}|y|^{2p\theta}}.
\end{array}
$$
If $p> \frac{n}{n-\alpha}$, we take $2\theta=\frac{\alpha}{p-1}$
and hence $\alpha<2p\theta < n$. Then,
$$\begin{array}{ll}
&\quad \displaystyle\int_{R^n}\frac{u^p(y)dy}{|x-y|^{n-\alpha}}\\[3mm]
&=\displaystyle\frac{c(x)}{(1+|x|^2)^{(n-\alpha)/2}}
+\frac{c(x)}{(1+|x|^2)^{p\theta-\alpha/2}}+c(x)\int_{|x|/2}^\infty r^{n-(n-\alpha+2p\theta)}
\frac{dr}{r}\\[3mm]
&=\displaystyle\frac{c(x)}{(1+|x|^2)^{p\theta-\alpha/2}}=c(x)u(x)
\end{array}
$$
for some double bounded function $c(x)$. This result shows that
(\ref{HLS}) has the slowly decaying radial solution as (\ref{EXP}).

Moreover, we can also find a fast decaying solution.
Now, take $2\theta=n-\alpha$, then $2p\theta > n$ as long as
$p > \frac{n}{n-\alpha}$. Thus,
$$\begin{array}{ll}
&\quad \displaystyle\int_{R^n}\frac{u^p(y)dy}{|x-y|^{n-\alpha}}\\[3mm]
&=\displaystyle\frac{c(x)}{(1+|x|^2)^{(n-\alpha)/2}}
+\frac{c(x)}{(1+|x|^2)^{p\theta-\alpha/2}}+c(x)\int_{|x|/2}^\infty r^{n-(n-\alpha+2p\theta)}
\frac{dr}{r}\\[3mm]
&=\displaystyle\frac{c(x)}{(1+|x|^2)^{(n-\alpha)/2}}=c(x)u(x)
\end{array}
$$
for some double bounded function $c(x)$.

{\it Step 2.} We prove (\ref{HLS}) has no positive solution when $0<p< \frac{n}{n-\alpha}$.

Suppose $u$ is a positive solution, then it follows a
contradiction. In fact, when $|x|>R$ with $R>0$, $|x-y| \leq 2|x|$
for $y \in B_R(0)$. In addition, $\int_{B_R(0)}u^p(y)dy \geq c$.
Hence,
$$
u(x) \geq c|x|^{\alpha-n}\int_{B_R(0)}u^p(y)dy \geq
\frac{c}{|x|^{a_0}}, \quad for ~ |x|>R.
$$
Here $a_0=n-\alpha$. Using this estimate, for $|x|>R$ we also get
$$
u(x) \geq c\int_{B_{|x|/2}(x)}
\frac{|y|^{-pa_0}dy}{|x-y|^{n-\alpha}}
\geq \frac{c}{|x|^{pa_0-\alpha}}:=\frac{c}{|x|^{a_1}}.
$$
By induction, we can obtain
$$
u(x) \geq \frac{c}{|x|^{a_j}}, \quad for ~ |x|>R,
$$
where $j=0,1,\cdots$, and
$$
a_j=pa_{j-1}-\alpha.
$$
In view of $0<p<\frac{n}{n-\alpha}$, we claim that $\{a_j\}$ is decreasing. In fact,
$a_1-a_0=(p-1)a_0-\alpha=(p-1)(n-\alpha)-\alpha<0$. Suppose $a_k<a_{k-1}$ for $k=1,2,\cdots,j$, then
$$
a_{j+1}-a_j=(p-1)a_j-\alpha<(p-1)a_0-\alpha=(p-1)(n-\alpha)-\alpha<0.
$$
This induction shows our claim.

Next we claim that there exists $j_0$ such that
\begin{equation}
a_{j_0}<0.
\label{2.8}
\end{equation}
Once it is verified, then
$$
u(x) \geq c\int_{R^n \setminus
B_R(0)}\frac{|y|^{-pa_{j_0}}dy}{|x-y|^{n-\alpha}} =\infty.
$$
It is impossible.

{\it Proof of (\ref{2.8}).}
In fact,
$$\begin{array}{ll}
a_j&=pa_{j-1}-\alpha=p(pa_{j-2}-\alpha)-\alpha=\cdots\\[3mm]
&=p^ja_0-\alpha(p^{j-1}+p^{j-2}+\cdots+p+1).
\end{array}
$$

When $p \in (1,\frac{n}{n-\alpha})$,
$$
a_j=p^j(n-\alpha)-\alpha\frac{p^j-1}{p-1}=(n-\alpha-\frac{\alpha}{p-1})p^j+\frac{\alpha}{p-1}.
$$
By virtue of $p < \frac{n}{n-\alpha}$, $n-\alpha-\frac{\alpha}{p-1}<0$, we can find a suitably large
$j_0$ such that $a_{j_0}<0$.

When $p=1$, $a_j=a_0-\alpha j$. Thus, $a_{j_0}<0$ for some suitably large $j_0$.

When $p \in (0,1)$, let $j \to \infty$. Then
$$
a_j=p^ja_0-\alpha\frac{1-p^j}{1-p} \to -\frac{\alpha}{1-p}<0.
$$
This implies $a_{j_0}<0$ for some $j_0$.

Thus, (\ref{2.8}) is verified.

{\it Step 3.} We prove (\ref{HLS}) has no positive solution when $p=\frac{n}{n-\alpha}$.

Otherwise, $u$ is a positive solution.
For $R>0$, denote $B_R(0)$ by $B$. From (\ref{HLS}) it follows that
\begin{equation} \label{low1}
u(x) \geq \frac{1}{(R+|x|)^{n-\alpha}} \int_B u^p(y)dy.
\end{equation}
Thus, taking $p$ powers of (\ref{low1}) and integrating on $B$, we have
\begin{equation} \label{low2}
\begin{array}{ll}
\displaystyle\int_B  u^p(x)dx &\geq \displaystyle\int_B \frac{
dx}{(R+|x|)^{n}} (\int_B u^p(y)dy)^p\\[3mm] &\geq
c(\displaystyle\int_B u^p(y)dy)^p.
\end{array}
\end{equation}
Here $c$ is independent of $R$. Letting $R \to \infty$, we see
$u \in L^p(R^n)$.

Taking $p$ powers of (\ref{low1}) and integrating on
$A_R:=B_{2R}(0) \setminus B_R(0)$, we get
$$
\int_{A_R} u^p(x)dx \geq \int_{A_R}\frac{
dx}{(R+|x|)^{n}} (\int_B u^p(y)dy)^p \geq
c(\displaystyle\int_B u^p(y)dy)^p.
$$
Letting $R \to \infty$, and noting
$u \in L^p(R^n)$, we obtain
$$
\int_{R^n} u^p(y)dy=0,
$$
which contradicts with $u>0$.
\end{proof}

\begin{corollary} \label{coro2.4}
Assume $k \in [1,n/2)$ and $p>1$.
The higher order semilinear PDE
\begin{equation} \label{highorder}
(-\Delta)^k u(x)=c(x)u^p(x), \quad x \in R^n,
\end{equation}
has a positive solution for some double bounded $c(x)$,
if and only if $p > \frac{n}{n-2k}$.
\end{corollary}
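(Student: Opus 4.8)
The plan is to reduce the $2k$-order PDE \eqref{highorder} to the integral equation \eqref{HLS} with $\alpha = 2k$ and then invoke Theorem \ref{th2.3}. For the ``if'' direction, when $p > \frac{n}{n-2k}$ one can build a positive solution directly: by Step 1 of the proof of Theorem \ref{th2.3} (with $\alpha = 2k$), the function $u(x) = (1+|x|^2)^{-\theta}$ with $2\theta = \frac{2k}{p-1}$ satisfies the integral equation \eqref{HLS} for some double bounded $c(x)$; applying $(-\Delta)^k$ to both sides (recall that $(-\Delta)^k$ is the inverse, up to a positive dimensional constant, of convolution with $|x|^{2k-n}$ when $1 \le k < n/2$) shows this same $u$ solves \eqref{highorder} for a double bounded coefficient. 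Alternatively, one computes $(-\Delta)^k$ of $(1+|x|^2)^{-\theta}$ directly and checks that the ratio with $u^p$ is bounded above and below, exactly as in Step 1 of Theorem \ref{th2.2} for the case $k=1$; the radial computation produces a rational function of $r^2$ whose leading behavior is governed by the choice $2\theta = \frac{2k}{p-1}$. When $p = \frac{n+2k}{n-2k}$ the fast-decay choice $2\theta = n-2k$ also works, giving the extremal-type solution.

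For the ``only if'' direction, suppose \eqref{highorder} has a positive solution $u$ for some double bounded $c(x)$; I must show $p > \frac{n}{n-2k}$. The key step is to convert the differential equation into the integral equation
$$
u(x) = c_n \int_{R^n} \frac{(-\Delta)^k u(y)\, dy}{|x-y|^{n-2k}} = c_n \int_{R^n} \frac{c(y) u^p(y)\, dy}{|x-y|^{n-2k}},
$$
after which Theorem \ref{th2.3} with $\alpha = 2k$ (and $p > 1$, which forces the condition into its stated form) immediately yields the desired inequality. To justify this representation one iterates the second-order case: writing $(-\Delta)^k u = (-\Delta)(-\Delta)^{k-1} u$, one applies the Green's-function/Pohozaev-type argument of Step 2 in the proof of Theorem \ref{th2.2} — testing against the truncated Newtonian potential $\phi_R$ on a ball $B_R(x_0)$, using Proposition \ref{prop2.1} to kill the boundary terms along a sequence $R_j \to \infty$ — to peel off one Laplacian at a time. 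This expresses $(-\Delta)^{k-1}u$ as a Riesz potential of $(-\Delta)^k u = c u^p$, then $(-\Delta)^{k-2} u$ as a Riesz potential of that, and so on, with the composition of Riesz potentials $I_2 * I_2 * \cdots * I_2$ ($k$ times) reducing to $I_{2k} = c_n |x|^{2k-n}*$ by the semigroup property of Riesz potentials.

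The main obstacle is the justification of the integral representation, i.e. controlling the boundary/growth terms at each of the $k$ integrations by parts and ensuring the intermediate quantities $(-\Delta)^j u$ have the right sign and integrability so that the iteration closes. In the second-order case ($k=1$) this is precisely the content of Step 2 of Theorem \ref{th2.2}: one needs $p \ge 1$ to run the Hölder estimate on $\int_{\partial B} u\, \partial_\nu \phi\, ds$, and one needs a lower bound on $u$ (from positivity of the potential) plus Proposition \ref{prop2.1} to extract a good radius sequence. For general $k$ one must iterate this, which requires knowing (or arranging, via a super-solution comparison) that each $(-\Delta)^j u$ for $0 \le j \le k-1$ is positive and decays; this is standard for the polyharmonic operator on $R^n$ with $1 \le k < n/2$ but the bookkeeping of the finitely many boundary terms and of the constant $c_n$ coming from composing the fundamental solutions is the technical heart of the argument. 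Once the representation formula is in hand, the conclusion is immediate from Theorem \ref{th2.3}, since the exponent threshold $\frac{n}{n-\alpha}$ with $\alpha = 2k$ is exactly $\frac{n}{n-2k}$, and the hypothesis $p > 1$ guarantees we are in the regime covered there.
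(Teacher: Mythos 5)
Your plan follows the paper's own route exactly: reduce the $2k$-order PDE to the HLS integral equation (\ref{HLS}) with $\alpha=2k$, then invoke Theorem~\ref{th2.3}. The paper delegates the equivalence to citations — \cite{LGZ} for the super-polyharmonic property $(-\Delta)^i u>0$ ($i=1,\dots,k-1$) and \cite{ChLO} for the integral representation — whereas you sketch the iterative Green's-function argument from scratch; that sketch is correct in spirit, but be cautious about calling the positivity of each $(-\Delta)^j u$ ``standard'': it is precisely the nontrivial content of the Liu--Guo--Zhang theorem (and requires $p>1$), not a routine super-solution comparison, which is why the paper defers to that reference rather than re-deriving it.
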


\begin{proof}
If $u>0$ solves the integral equation (\ref{HLS}) with $\alpha=2k$,
it is easy to see
that $u$ also solves the higher order semilinear PDE
(\ref{highorder}).
On the contrary, if $p>1$ and $u$ solves
(\ref{highorder}), \cite{LGZ} proved $(-\Delta)^i u>0$ for $i=1,2,\cdots,
k-1$. Similar to the argument in \cite{ChLO}, (\ref{highorder})
is equivalent to (\ref{HLS}) with $\alpha=2k$. Therefore, if $p>1$,
Theorem \ref{th2.3} shows that (\ref{highorder})
has positive solutions for some double bounded function $c(x)$, if and only if
$p > \frac{n}{n-2k}$.
\end{proof}

\subsection{Integral equation involving the Wolff potential}

\begin{theorem} \label{th2.5}
The Wolff type integral equation
\begin{equation} \label{wolff}
u(x)=c(x)W_{\beta,\gamma}(u^p)(x)
\end{equation}
has a positive solution for some double bounded $c(x)$,
if and only if
$$
p > \frac{n(\gamma-1)}{n-\beta\gamma}.
$$
\end{theorem}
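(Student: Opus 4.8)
The plan is to mirror the structure of the proof of Theorem \ref{th2.3} (the HLS case), replacing Riesz potentials by Wolff potentials, since the Wolff potential $W_{\beta,\gamma}$ plays the role of the Riesz potential $I_{\beta\gamma}$ up to the exponent $\frac{1}{\gamma-1}$. The argument splits into a construction (sufficiency) and a nonexistence part (necessity).

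\textbf{Step 1: Sufficiency when $p > \frac{n(\gamma-1)}{n-\beta\gamma}$.} I would again try the explicit ansatz $u(x) = (1+|x|^2)^{-\theta}$ from \eqref{EXP}. Split the Wolff integral $\int_0^\infty [t^{\beta\gamma-n}\int_{B_t(x)} u^p(y)\,dy]^{1/(\gamma-1)} \frac{dt}{t}$ according to whether $t$ is small, comparable to $|x|$, or large relative to $|x|$, for $|x| > 2R$; since $u$ is bounded, for small $t$ one has $\int_{B_t(x)} u^p \sim u^p(x) t^n$, giving an integrand $\sim u^p(x)^{1/(\gamma-1)} t^{\beta\gamma/(\gamma-1)-1}$, which is integrable near $0$; for large $t$ one needs $p\theta$ to satisfy $2p\theta > n$ after integrating, i.e. one picks either the slow rate $2\theta = \frac{\beta\gamma}{p-(\gamma-1)}$ (the analogue of $\frac{\alpha}{p-1}$, requiring $p > \gamma-1$ and $\beta\gamma < 2p\theta < n$, which is exactly $p > \frac{n(\gamma-1)}{n-\beta\gamma}$) or the fast rate $2\theta = \frac{n-\beta\gamma}{\gamma-1}$. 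In each regime one checks that the resulting quantity is $\asymp (1+|x|^2)^{-\theta}$, so $u = c(x) W_{\beta,\gamma}(u^p)$ for a double bounded $c$. The bookkeeping is slightly heavier than the HLS case because of the $\frac{1}{\gamma-1}$ power inside the $t$-integral, but the mechanism is identical.

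\textbf{Step 2: Necessity — nonexistence when $0 < p < \frac{n(\gamma-1)}{n-\beta\gamma}$.} I would run the same iteration/bootstrap argument as in Step 2 of Theorem \ref{th2.3}. Since $\int_{B_R(0)} u^p \geq c > 0$, restricting the Wolff integral to $t \geq 2|x|$ (so that $B_R(0) \subset B_t(x)$) and integrating in $t$ gives a lower bound $u(x) \geq c|x|^{-a_0}$ with $a_0 = \frac{n-\beta\gamma}{\gamma-1}$. Feeding $u(x) \geq c|x|^{-a_j}$ back into the Wolff potential (using the $t \sim |x|$ regime, where $\int_{B_t(x)} |y|^{-pa_j}\,dy \sim t^{n-pa_j}$ provided $pa_j < n$) produces $u(x) \geq c|x|^{-a_{j+1}}$ with the recursion $a_{j+1} = \frac{pa_j - \beta\gamma}{\gamma-1}$. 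One checks, exactly as in the paper, that under $p < \frac{n(\gamma-1)}{n-\beta\gamma}$ the sequence $\{a_j\}$ is decreasing and eventually negative: the fixed point is $a_* = \frac{\beta\gamma}{p-(\gamma-1)}$ when $p > \gamma-1$, and $a_j - a_* = (p/(\gamma-1))^j(a_0 - a_*)$ with $a_0 - a_* < 0 \iff p < \frac{n(\gamma-1)}{n-\beta\gamma}$, while the cases $p = \gamma-1$ and $p < \gamma-1$ are handled by the same elementary dichotomy as $p=1$ and $p<1$ in Theorem \ref{th2.3}. Once some $a_{j_0} < 0$, the lower bound $u \geq c|x|^{-a_{j_0}}$ with a growing power forces $W_{\beta,\gamma}(u^p)(x) = \infty$, a contradiction.

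\textbf{Step 3: Necessity — the borderline $p = \frac{n(\gamma-1)}{n-\beta\gamma}$.} Here I expect the main obstacle, just as Step 3 was the delicate part of Theorem \ref{th2.3}. The idea is again a self-improving integral estimate: from the Wolff equation one gets $u(x) \geq c\,(R+|x|)^{-(n-\beta\gamma)/(\gamma-1)}\big(\int_{B_R(0)} u^p\big)^{1/(\gamma-1)}$, and raising to the power $p$ and integrating over $B_R(0)$ yields, since $p \cdot \frac{n-\beta\gamma}{\gamma-1} = n$ in the critical case, $\int_{B_R} u^p \geq c\big(\int_{B_R} u^p\big)^{p/(\gamma-1)}$ with $c$ independent of $R$; because $p/(\gamma-1) > 1$ here this bounds $\int_{B_R} u^p$, so $u \in L^p(R^n)$. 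Then integrating the same $p$-th power inequality over the annulus $A_R = B_{2R}\setminus B_R$ and letting $R \to \infty$ forces $\int_{R^n} u^p = 0$, contradicting $u > 0$. The subtlety will be making the "$c$ independent of $R$" claim precise in the Wolff setting, where the lower bound involves a nonlinear power of the mass; I would handle this exactly as in \eqref{low1}–\eqref{low2}, checking that $\int_{B_R} (R+|x|)^{-n}\,dx \geq c > 0$ uniformly in $R$. Combining Steps 1–3 gives the claimed sharp criterion.
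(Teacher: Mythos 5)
Your proposal follows the paper's proof almost verbatim in Step 1 (the ansatz $(1+|x|^2)^{-\theta}$ with slow rate $2\theta=\frac{\beta\gamma}{p-\gamma+1}$ and fast rate $2\theta=\frac{n-\beta\gamma}{\gamma-1}$) and in Step 2 (the lower-bound bootstrap with $a_0=\frac{n-\beta\gamma}{\gamma-1}$ and the recursion $a_{j+1}=\frac{pa_j-\beta\gamma}{\gamma-1}$, analyzed by the three-way split $p<\gamma-1$, $p=\gamma-1$, $p>\gamma-1$, exactly as the paper does). The one place you genuinely diverge is the borderline case $p=\frac{n(\gamma-1)}{n-\beta\gamma}$ (your Step 3 / the paper's Substep 2.2). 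The paper obtains the pointwise lower bound $u(x)\gtrsim R^{-(n-\beta\gamma)/(\gamma-1)}\bigl(\int_{B_{R/4}}u^p\bigr)^{1/(\gamma-1)}$ for $x\in B_{R/4}$ by applying a H\"older inequality in the $t$-variable with conjugate weights $\gamma-1$ and $2-\gamma$ and then exchanging the order of integration; that H\"older step silently requires $1<\gamma<2$ (otherwise $2-\gamma$ is not a legitimate H\"older weight). Your route is simpler and removes this restriction: you restrict the Wolff integral to its tail $t\geq R+|x|$, where $B_R(0)\subset B_t(x)$, so that $u(x)\geq c\,(R+|x|)^{-(n-\beta\gamma)/(\gamma-1)}\bigl(\int_{B_R}u^p\bigr)^{1/(\gamma-1)}$ for all $x$ with $c$ an absolute constant. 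Raising to the $p$-th power, integrating over $B_R$, and using $p\cdot\frac{n-\beta\gamma}{\gamma-1}=n$ together with $\int_{B_R}(R+|x|)^{-n}dx\geq c>0$ (uniformly in $R$, just as in \eqref{low1}--\eqref{low2}) gives $\int_{B_R}u^p\geq c_0\bigl(\int_{B_R}u^p\bigr)^{p/(\gamma-1)}$ with $c_0$ independent of $R$; since $p/(\gamma-1)=\frac{n}{n-\beta\gamma}>1$ this bounds $\int_{R^n}u^p$, and the annulus argument then forces $\int_{R^n}u^p=0$, a contradiction. Your version works for every $\gamma>1$ and is closer in spirit to the proof of Step 3 of Theorem~\ref{th2.3}, so it is preferable. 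One small imprecision in Step 2: you say ``using the $t\sim|x|$ regime''; what is actually used (as in the paper's \eqref{2.11}) is the full tail $t\geq 2|x|$, where $B_t(x)\supset B_{t-|x|}(0)\supset B_{t/2}(0)$, which is needed both to justify $\int_{B_t(x)}|y|^{-pa_j}\,dy\gtrsim t^{n-pa_j}$ and to let the $t$-integral produce the power $|x|^{-a_{j+1}}$ or diverge when $pa_j\leq\beta\gamma$.
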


\begin{proof}
{\it Step 1.} Existence.

Inserting (\ref{EXP}) into $W_{\beta,\gamma}(u^p)(x)$,
we obtain
$$
W_{\beta,\gamma}(u^p)(x)=(\int_0^{|x|/2}+\int_{|x|/2}^\infty)
[\int_{B_t(x)}\frac{dy}{(1+|y|^2)^{p\theta}}t^{\beta\gamma-n}]
^{\frac{1}{\gamma-1}} \frac{dt}{t}:=I_1+I_2.
$$

When $|x| \leq R$ for some $R>0$, then $u$ is proportional to
$W_{\beta,\gamma}(u^p)$. So we also only consider suitably large $|x|$.

Clearly,
$$\begin{array}{ll}
I_1&=\displaystyle\int_0^{|x|/2}[\frac{\int_{B_t(x)}(1+|y|^2)^{-p\theta}dy}
{t^{n-\beta\gamma}}]^{\frac{1}{\gamma-1}}
\frac{dt}{t}\\[3mm]
&=c(1+|x|^2)^{-\frac{p\theta}{\gamma-1}}\displaystyle\int_0^{|x|/2} t^{\frac{\beta\gamma}{\gamma-1}}
\frac{dt}{t}\\[3mm]
&=c(1+|x|^2)^{-\frac{p\theta}{\gamma-1}}|x|^{\frac{\beta\gamma}{\gamma-1}}
=c(1+|x|^2)^{\frac{\beta\gamma-2p\theta}{2(\gamma-1)}}.
\end{array}
$$

Take the slow rate $2\theta=\frac{\beta\gamma}{p-\gamma+1}$.
Now, $\beta\gamma<2p\theta < n$ in view of $p > \frac{n(\gamma-1)}{n-\beta\gamma}$,
and hence
$$\begin{array}{ll}
I_2&=c\displaystyle\int_{|x|/2}^\infty (\frac{\int_{B_t(x)}(1+|y|^2)^{-p\theta}dy}{t^{n-\beta\gamma}}
)^{\frac{1}{\gamma-1}} \frac{dt}{t}\\[3mm]
&=c(x)\displaystyle\int_{|x|/2}^\infty (\frac{t^{n-2p\theta}}{t^{n-\beta\gamma}}
)^{\frac{1}{\gamma-1}} \frac{dt}{t}
=c(x)(1+|x|^2)^{\frac{\beta\gamma-2p\theta}{2(\gamma-1)}}.
\end{array}
$$
Thus, $I_1+I_2=c(x)u(x)$ for some double bounded $c(x)$.

Similarly, we also find a fast decaying solution. In fact, taking
$2\theta=\frac{n-\beta\gamma}{\gamma-1}$, we also have $2p\theta > n$
from $p > \frac{n(\gamma-1)}{n-\beta\gamma}$, and hence
$$\begin{array}{ll}
I_2&=\displaystyle\int_{|x|/2}^\infty (\frac{\int_{B_t(x)\cap B_1(0)}(1+|y|^2)^{-p\theta}dy
+\int_{B_t(x)\setminus B_1(0)}(1+|y|^2)^{-p\theta}dy}{t^{n-\beta\gamma}}
)^{\frac{1}{\gamma-1}} \frac{dt}{t}\\[3mm]
&=c(x)\displaystyle\int_{|x|/2}^\infty t^{-\frac{n-\beta\gamma}{\gamma-1}}
\frac{dt}{t}=c(x)(1+|x|^2)^{-\frac{n-\beta\gamma}{2(\gamma-1)}}.
\end{array}
$$
There also holds
$I_1+I_2=c(x)(1+|x|^2)^{-\frac{n-\beta\gamma}{2(\gamma-1)}}
=c(x)u(x)$.

{\it Step 2.} Nonexistence.

{\it Substep 2.1.}
Let
\begin{equation} \label{2.10}
0<p<\frac{n(\gamma-1)}{n-\beta\gamma}.
\end{equation}
Suppose that $u$ solves (\ref{wolff}), then
\begin{equation} \label{low-ini}
u(x) \geq c\int_{2|x|}^\infty t^{\frac{\beta\gamma-n}{\gamma-1}} \frac{dt}{t}
=\frac{c}{|x|^{a_0}},
\end{equation}
since $\int_{B_1(0)}u^p(y)dy \geq c$,
where $a_0=\frac{n-\beta\gamma}{\gamma-1}$.
By this estimate, we have
\begin{equation} \label{2.11}
u(x) \geq c\int_{2|x|}^\infty (\frac{\int_{B_{t-|x|}(0)}|y|^{-pa_0}dy}{
t^{n-\beta\gamma}})^{\frac{1}{\gamma-1}} \frac{dt}{t}
\geq c\int_{2|x|}^\infty (t^{\beta\gamma-pa_0})^{\frac{1}{\gamma-1}}
\frac{dt}{t}.
\end{equation}

When $\frac{p}{\gamma-1} \in (0,\frac{\beta\gamma}{n-\beta\gamma}]$,
we have $\beta\gamma-pa_0 \geq 0$. Eq. (\ref{2.11}) implies $u(x)=\infty$. It is impossible.

Next, we consider the case $\frac{p}{\gamma-1} \in (\frac{\beta\gamma}{n-\beta\gamma},
\frac{n}{n-\beta\gamma})$.
Now (\ref{2.11}) leads to
$$
u(x) \geq \frac{c}{|x|^{a_1}},
$$
where $a_1=\frac{p}{\gamma-1}a_0-\frac{\beta\gamma}{\gamma-1}$.

Write
\begin{equation} \label{2.14}
a_j=\frac{p}{\gamma-1}a_{j-1}-\frac{\beta\gamma}{\gamma-1},~j=1,2,\cdots.
\end{equation}
Suppose that $a_k<a_{k-1}$ for $k=1,2,\cdots,j-1$.
By virtue of (\ref{2.10}), it follows
$$\begin{array}{ll}
a_j-a_{j-1}&=(\frac{p}{\gamma-1}-1)a_{j-1}-\frac{\beta\gamma}{\gamma-1}
< (\frac{p}{\gamma-1}-1)a_0-\frac{\beta\gamma}{\gamma-1}\\[3mm]
&=(\frac{p}{\gamma-1}-1)\frac{n-\beta\gamma}{\gamma-1}-\frac{\beta\gamma}{\gamma-1}
=(\frac{n-\beta\gamma}{(\gamma-1)^2})p-\frac{n}{\gamma-1}\\[3mm]
&<(\frac{n-\beta\gamma}{(\gamma-1)^2})\frac{n(\gamma-1)}{n-\beta\gamma}-\frac{n}{\gamma-1}=0.
\end{array}
$$
Thus, $\{a_j\}_{j=0}^\infty$ is decreasing as long as (\ref{2.10}) is true.

Furthermore, we claim that there must be $j_0>0$ such that $a_{j_0} \leq 0$.
This leads to $u(x)=\infty$,
which contradicts with the fact that $u$ is a positive solution.

In fact, by (\ref{2.14}) we get
$$
a_j=(\frac{p}{\gamma-1})^ja_0-[1+\frac{p}{\gamma-1}+\cdots+(\frac{p}{\gamma-1})^{j-1}]
\frac{\beta\gamma}{\gamma-1}.
$$

If $\frac{p}{\gamma-1}=1$, then we can find a large $j_0$ such that
$$
a_{j_0}=a_0-j_0\frac{\beta\gamma}{\gamma-1}
\leq 0.
$$

If $\frac{p}{\gamma-1}\in (1,\frac{n}{n-\beta\gamma})$, then using
$a_0-\frac{\beta\gamma}{p-\gamma+1}<0$
which is implied by (\ref{2.10}), we can find a large $j_0$ such that
$$\begin{array}{ll}
a_{j_0}&=(\displaystyle\frac{p}{\gamma-1})^{j_0}a_0-\frac{(\frac{p}{\gamma-1})^{j_0}-1}
{\frac{p}{\gamma-1}-1}\frac{\beta\gamma}{\gamma-1}\\[3mm]
&=(\displaystyle\frac{p}{\gamma-1})^{j_0}(a_0-\frac{\beta\gamma}{p-\gamma+1})
+\frac{\beta\gamma}{p-\gamma+1} \leq 0.
\end{array}
$$

If $\frac{p}{\gamma-1} \in (0,1)$, letting $j \to \infty$, we get
$$
a_j=(\frac{p}{\gamma-1})^ja_0-\frac{1-(\frac{p}{\gamma-1})^j}
{1-\frac{p}{\gamma-1}}\frac{\beta\gamma}{\gamma-1} \to \frac{\beta\gamma}{p-\gamma+1}<0.
$$
Thus, there must be $j_0$ such that
$a_{j_0} \leq 0$.

{\it Substep 2.2.} Let $p=\frac{n(\gamma-1)}{n-\beta\gamma}$. By the argument of
Supstep 2.1, we can assume $p=\frac{n(\gamma-1)}{n-\beta\gamma}>1$ here.
We deduce the contradiction if $u$ is a positive solution of (\ref{wolff}).

For $R>0$, denote $B_R(0)$ by $B_R$. By using the H\"older
inequality, from (\ref{wolff}) we deduce that for any $x \in B_R$,
$$\begin{array}{ll}
&\quad \displaystyle\int_0^R \int_{B_t(x)}u^p(y)dy dt\\[3mm]
&\leq (\displaystyle\int_0^R
(\int_{B_t(x)}u^p(y)dy)^{\frac{1}{\gamma-1}}
t^{\frac{\beta\gamma-n}{\gamma-1}-1}dt)^{\gamma-1}
(\int_0^R t^{\frac{n-\beta\gamma+\gamma-1}{2-\gamma}}dt)^{2-\gamma}\\[3mm]
&=CR^{n-\beta\gamma+1} (\displaystyle\int_0^R
(\frac{\int_{B_t(x)}u^p(y)dy}{t^{n-\beta\gamma}})^{\frac{1}{\gamma-1}}
\frac{dt}{t})^{\gamma-1}.
\end{array}
$$
Hence, exchanging the order of the integral variables, we have
$$\begin{array}{ll}
u(x) &\geq c\displaystyle\int_0^R
(\frac{\int_{B_t(x)}u^p(y)dy}{t^{n-\beta\gamma}})^{\frac{1}{\gamma-1}}
\frac{dt}{t}\\[3mm] &\geq cR^{-\frac{n-\beta\gamma+1}{\gamma-1}}
(\displaystyle\int_0^R(\int_{B_t(x)}u^p(y)dy)dt)^{\frac{1}{\gamma-1}}\\[3mm]
&\geq cR^{-\frac{n-\beta\gamma+1}{\gamma-1}}
(\displaystyle\int_{B_R}u^p(y)(\int_{|x-y|}^R dt)dy)^{\frac{1}{\gamma-1}}\\[3mm]
&\geq cR^{-\frac{n-\beta\gamma}{\gamma-1}}
(\displaystyle\int_{B_{R/4}}u^p(y)dy)^{\frac{1}{\gamma-1}}.
\end{array}
$$
Therefore, we get
\begin{equation} \label{low1m}
u^p(x) \geq cR^{p\frac{\beta\gamma-n}{\gamma-1}}(\int_{B_{R/4}}
u^p(y)dy)^{\frac{p}{\gamma-1}}.
\end{equation}
Integrating on $B_{R/4}$ and using
$p=\frac{n(\gamma-1)}{n-\beta\gamma}$ again, we get
$$\begin{array}{ll}
&\quad \displaystyle\int_{B_{R/4}}u^p(x)dx \\[3mm]
&\geq cR^{p\frac{\beta\gamma-n}{\gamma-1}}
\displaystyle\int_{B_{R/4}}dx(\int_{B_{R/4}}u^p(y)dy)^{\frac{p}{\gamma-1}}\\[3mm]
&\geq c(\displaystyle\int_{B_{R/4}}u^p(y)dy)^{\frac{p}{\gamma-1}}.
\end{array}
$$
Here $c$ is independent of $R$. Letting $R \to \infty$ and noting
$p>\gamma-1$, we have
\begin{equation} \label{tianim}
\int_{R^n} u^p(x) dx<\infty.
\end{equation}

Integrating (\ref{low1m}) on $A_R=B_{R/4} \setminus B_{R/8}$ yields
$$
\int_{A_R} u^p(x)dx \geq c
R^{p\frac{\beta\gamma-n}{\gamma-1}}\int_{A_R}dx (\int_{B_{R/4}}
u^p(y)dy)^{\frac{p}{\gamma-1}}.
$$
By $p=\frac{n(\gamma-1)}{n-\beta\gamma}$, it follows
$$
\int_{A_R} u^p(x)dx \geq c(\int_{B_{R/4}}
u^p(y)dy)^{{\frac{p}{\gamma-1}}},
$$
where $c$ is independent of $R$. Letting $R \to \infty$, and noting
(\ref{tianim}), we obtain
$$
\int_{R^n} u^p(y)dy=0,
$$
which implies $u \equiv 0$. It is impossible.

The proof is complete.
\end{proof}

\paragraph{Remark 2.1.}
When $\beta=\alpha/2$ and $\gamma=2$, (\ref{wolff}) is reduced to
(\ref{HLS}). Theorem \ref{th2.5} is the generalization of Theorem
\ref{th2.3}.

\subsection{$\gamma$-Laplace equation}

\begin{theorem} \label{th2.6}
(1) If $p>\frac{n(\gamma-1)}{n-\gamma}$,
then the $\gamma$-Laplace equation
\begin{equation}
-\Delta_\gamma u=c(x)u^p
\label{pLaplace}
\end{equation}
has positive solutions for some double bounded $c(x)$.

(2) If $0<p \leq \frac{n(\gamma-1)}{n-\gamma}$, then for any double
bounded function $c(x)$, (\ref{pLaplace})
has no any positive solution satisfying $\inf_{R^n}u=0$.
\end{theorem}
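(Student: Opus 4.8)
The plan is to reduce the $\gamma$-Laplace equation \eqref{pLaplace} to the Wolff type integral equation \eqref{wolff} already handled in Theorem \ref{th2.5}, and then transfer both the existence and the nonexistence conclusions. For part (1), the quickest route is constructive: I would directly verify that the slowly decaying radial ansatz $u(x)=(1+|x|^2)^{-\theta}$ from \eqref{EXP}, with the slow rate $2\theta=\frac{\gamma}{p-\gamma+1}$ (the analogue of the exponents chosen in the proof of Theorem \ref{th2.5} with $\beta=1$), solves \eqref{pLaplace} for some double bounded $c(x)$. Writing $U(r)=u(x)$ and computing $-\Delta_\gamma u = -\frac{1}{r^{n-1}}\bigl(r^{n-1}|U_r|^{\gamma-2}U_r\bigr)_r$ explicitly, one gets an expression of the form $(1+r^2)^{-\text{(exponent)}}$ times a rational function of $r^2$ whose numerator and denominator are comparable to positive constants precisely when the algebraic condition $p>\frac{n(\gamma-1)}{n-\gamma}$ guarantees the sign $n-\gamma-\gamma\theta(\gamma-1)>0$; matching exponents forces $p=1+\tfrac{\gamma}{\theta(\gamma-1)}$, i.e. $2\theta=\frac{\gamma}{p-\gamma+1}$, and the ratio of the resulting power of $(1+|x|^2)$ to $u^p$ is the double bounded $c(x)$. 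Alternatively, one can invoke the Kilpeläinen--Malý / Wolff potential estimates: a radial solution of $-\Delta_\gamma u = u^p$ obtained in \cite{SZ} (which exists for $p\ge\gamma^*-1$, but we need it for the larger range, so the explicit ansatz is safer) satisfies $u\asymp W_{1,\gamma}(u^p)$, and then Step 1 of Theorem \ref{th2.5} with $\beta=1$ applies. I expect the explicit computation to be routine but slightly messy; the cleanest exposition keeps it at the level of "a direct calculation shows."

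For part (2), suppose $c(x)$ is double bounded and $u>0$ solves \eqref{pLaplace} with $\inf_{R^n}u=0$. The key input is the Kilpeläinen--Malý pointwise estimate quoted in the introduction: under $\inf_{R^n}u=0$ there is $C>0$ with
$$
\frac{1}{C}\,W_{1,\gamma}(c(\cdot)u^p)(x)\le u(x)\le C\,W_{1,\gamma}(c(\cdot)u^p)(x),\qquad x\in R^n .
$$
Because $c$ is double bounded, $W_{1,\gamma}(cu^p)\asymp W_{1,\gamma}(u^p)$, so $u$ solves an integral equation of the form $u(x)=\tilde c(x)W_{1,\gamma}(u^p)(x)$ for some double bounded $\tilde c(x)$ — that is, \eqref{wolff} with $\beta=1$. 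Since $0<p\le\frac{n(\gamma-1)}{n-\gamma}=\frac{n(\gamma-1)}{n-\beta\gamma}$, Theorem \ref{th2.5} (the nonexistence half, Step 2) applies verbatim and yields a contradiction. This is where the argument is essentially finished; the only thing to check is that the hypotheses of the Kilpeläinen--Malý estimate are met, namely that $c(\cdot)u^p\in L^1_{loc}$ and the equation is understood in the weak (distributional or potential-theoretic) sense, which is automatic for a positive classical solution.

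The main obstacle, and the point that deserves the most care, is the passage from the PDE to the integral equation in part (2): one must be sure that the lower Wolff potential bound holds globally on $R^n$ (not merely locally), which is exactly what the condition $\inf_{R^n}u=0$ buys — without it the potential-theoretic representation can fail (indeed for $p>\frac{n(\gamma-1)}{n-\gamma}$ the slow-decay solution of part (1) has $\inf u=0$ too, so the dichotomy really is governed by the exponent, not by the normalization). A secondary, purely bookkeeping, obstacle is making sure the exponent identifications with $\beta=1$ line up: $\frac{n(\gamma-1)}{n-\gamma}$ here is $\frac{n(\gamma-1)}{n-\beta\gamma}$ there, and the slow rate $2\theta=\frac{\gamma}{p-\gamma+1}$ here is $2\theta=\frac{\beta\gamma}{p-\gamma+1}$ there, so Theorem \ref{th2.5} transfers with no new work once the reduction is in place.
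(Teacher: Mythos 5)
Your treatment of part (2) agrees with the paper's: apply the Kilpel\"ainen--Mal\'y comparison under the hypothesis $\inf_{R^n}u=0$ to convert \eqref{pLaplace} into a Wolff-type integral equation $u=\tilde c(x)W_{1,\gamma}(u^p)$ with $\tilde c$ double bounded, and then invoke the nonexistence half of Theorem~\ref{th2.5} with $\beta=1$. That step is sound.

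Part (1), however, has a genuine gap in the constructive ansatz. You propose verifying that $u(x)=(1+|x|^2)^{-\theta}$ (the form of \eqref{EXP}) solves \eqref{pLaplace} for some double bounded $c$. For the $\gamma$-Laplacian with $\gamma\neq 2$ this does not work: with $U(r)=(1+r^m)^{-\theta}$ one computes
$$
-\Delta_\gamma u=-\frac{1}{r^{n-1}}\bigl(r^{n-1}|U_r|^{\gamma-2}U_r\bigr)_r
=(m\theta)^{\gamma-1}\,r^{\,m(\gamma-1)-\gamma}\,\frac{n+[\,n-(\theta+1)\gamma\,]r^m}{(1+r^m)^{(\theta+1)(\gamma-1)+1}},
$$
and the prefactor $r^{\,m(\gamma-1)-\gamma}$ is double bounded on $R^n$ only if $m(\gamma-1)=\gamma$, i.e.\ $m=\gamma/(\gamma-1)$. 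Choosing $m=2$, as you do, leaves a dangling $r^{\gamma-2}$ which is unbounded (for $\gamma>2$) or singular at $0$ (for $\gamma<2$), so the resulting $c(x)=(-\Delta_\gamma u)/u^p$ is \emph{not} double bounded and the ansatz fails. The paper's proof uses precisely $u(x)=(1+|x|^m)^{-\theta}$ with $m=\gamma/(\gamma-1)$ and slow rate $m\theta=\frac{\gamma}{p-\gamma+1}$; under $p>\frac{n(\gamma-1)}{n-\gamma}$ this makes $n-(\theta+1)\gamma>0$, so the bracket is comparable to $1$ and $c(x)$ is double bounded. (Relatedly, your displayed identifications of $\theta$ are internally inconsistent: from $p=1+\gamma/(\theta(\gamma-1))$ one gets $\theta=\frac{\gamma}{(\gamma-1)(p-1)}$, which does not equal your stated $\theta=\frac{\gamma}{2(p-\gamma+1)}$.) Your fallback suggestion of importing the Serrin--Zou radial solution via Wolff bounds is also not available, since that solution is only known for $p\ge\gamma^*-1$, a strictly smaller range than $p>\frac{n(\gamma-1)}{n-\gamma}$. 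So part (1) needs the $m=\gamma/(\gamma-1)$ ansatz; once that substitution is made, the argument goes through as in the paper.
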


\begin{proof}
(1) For $u(x)=\frac{1}{(1+|x|^m)^\theta}$ with $m=\frac{\gamma}{\gamma-1}$,
similar to the derivation of (\ref{2.0}), we have
\begin{equation} \label{2.3}
\begin{array}{ll}
&-\Delta_\gamma u=(1-\gamma)|U_r|^{\gamma-2}U_{rr}
-\frac{n-1}{r}|U_r|^{\gamma-2}U_r\\[3mm]
&=\frac{(m\theta)^{\gamma-1}r^{(m-1)(\gamma-2)+m-2}}
{(1+r^m)^{(\theta+1)(\gamma-1)}}
[\frac{-m(\theta+1)(\gamma-1)r^m}{1+r^m}+n-1+(\gamma-1)(m-1)]\\[3mm]
&=\frac{(m\theta)^{\gamma-1}}
{(1+r^m)^{(\theta+1)(\gamma-1)}}
[\frac{n+[n-(\theta+1)\gamma]r^m}{1+r^m}].
\end{array}
\end{equation}

Let $p>\frac{n(\gamma-1)}{n-\gamma}$. Take $m\theta=\frac{\gamma}{p-\gamma+1}$,
then $n>(\theta+1)\gamma$. Therefore, (\ref{2.3}) implies
$$
-\Delta_\gamma u=c(r)u^{(\theta+1)(\gamma-1)/\theta}=c(r)u^p
$$
for some double bounded $c(r)$. This result shows that (\ref{pLaplace}) has a
slowly decaying radial solution.

Moreover, if $p=\frac{n(\gamma-1)+\gamma}{n-\gamma}$,
we can find another fast decaying solution with rate
$m\theta=\frac{n-\gamma}{\gamma-1}$. Now, $n=(\theta+1)\gamma$ and hence
(\ref{pLaplace}) implies
$$
-\Delta_\gamma u=c(r)u^{[(\theta+1)(\gamma-1)+1]/\theta}
=c(r)u^p
$$
for some double bounded $c(r)$.

(2) Suppose $u$ solves (\ref{pLaplace}) and satisfies $\inf_{R^n}u=0$.
According to Corollary 4.13 in \cite{KM}, there exists $C>0$ such that
$$
\frac{1}{C}W_{1,\gamma}(cu^p)(x) \leq u(x) \leq CW_{1,\gamma}(cu^p)(x).
$$
Since $c(x)$ is double bounded, we can see that
$$
K(x):=\frac{u(x)}{W_{1,\gamma}(u^p)(x)}
$$
is also double bounded. This shows that $u$ solves
$$
u(x)=K(x)W_{1,\gamma}(u^p)(x).
$$
When $0<p \leq \frac{n(\gamma-1)}{n-\gamma}$, Theorem \ref{th2.5}
shows that this Wolff type equation has no positive solution for any
double bounded function $K(x)$.
Therefore, we prove the nonexistence of positive solutions to (\ref{pLaplace})
when $0<p\leq \frac{n(\gamma-1)}{n-\gamma}$.
\end{proof}

\section{Systems with variable coefficients}

\subsection{HLS type system}

\begin{theorem} \label{th3.1}
There exist positive solutions $u,v$ of
the integral system involving the Riesz potentials
\begin{equation}
 \left \{
   \begin{array}{l}
      u(x)=c_1(x) \displaystyle\int_{R^n}\frac{v^q(y)dy}{|x-y|^{n-\alpha}}\\
      v(x)=c_2(x)\displaystyle\int_{R^n}\frac{u^p(y)dy}{|x-y|^{n-\alpha}}
   \end{array}
   \right. \label{HLSs}         
 \end{equation}
for some double bounded functions $c_1(x)$ and $c_2(x)$,
if and only if $pq>1$ and
\begin{equation} \label{5.1}
\max\{\frac{\alpha(p+1)}{pq-1}, \frac{\alpha(q+1)}{pq-1}\} < n-\alpha.
\end{equation}
\end{theorem}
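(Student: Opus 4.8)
The plan is to run the proof of Theorem~\ref{th2.3} on the two equations of (\ref{HLSs}) simultaneously. For the ``if'' direction, assume $pq>1$ and (\ref{5.1}) and look for an explicit pair of power-type solutions of the shape (\ref{EXP}),
$$u(x)=\frac{1}{(1+|x|^2)^{\theta_1}},\qquad v(x)=\frac{1}{(1+|x|^2)^{\theta_2}},\qquad \theta_1,\theta_2>0.$$
Inserting $v^q$ into the first Riesz potential and splitting the integral over $B_R(0)$, $B_{|x|/2}(x)$ and the complement exactly as in Step~1 of Theorem~\ref{th2.3} yields $\int_{R^n}|x-y|^{\alpha-n}v^q(y)\,dy=c(x)(1+|x|^2)^{-(q\theta_2-\alpha/2)}$ for a double bounded $c$, provided $\alpha<2q\theta_2<n$; symmetrically for the second equation when $\alpha<2p\theta_1<n$. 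Hence $(u,v)$ solves (\ref{HLSs}) for suitable double bounded $c_1,c_2$ iff $\theta_1=q\theta_2-\alpha/2$ and $\theta_2=p\theta_1-\alpha/2$, i.e.
$$\theta_1=\frac{\alpha(q+1)}{2(pq-1)},\qquad \theta_2=\frac{\alpha(p+1)}{2(pq-1)},$$
which are positive precisely when $pq>1$; the side conditions $2q\theta_2=2\theta_1+\alpha<n$ and $2p\theta_1=2\theta_2+\alpha<n$ are exactly (\ref{5.1}). (A fast-decaying solution with $\theta_1=\theta_2=(n-\alpha)/2$ also exists when $p,q>n/(n-\alpha)$, as in Theorem~\ref{th2.3}, but it is not needed here.)

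\textbf{Necessity away from the borderline.} Suppose $(u,v)$ is a positive solution for some double bounded $c_1,c_2$. From $u,v>0$ one has $\int_{B_1}u^p,\int_{B_1}v^q\ge c>0$, hence $u(x),v(x)\ge c|x|^{\alpha-n}$ for $|x|>R$. Feeding these bounds alternately into (\ref{HLSs}) and using $\int_{B_{|x|/2}(x)}|y|^{-m}|x-y|^{\alpha-n}\,dy\ge c|x|^{\alpha-m}$, one gets inductively $u(x)\ge c|x|^{-a_j}$, $v(x)\ge c|x|^{-b_j}$ for $|x|$ large, with $a_0=b_0=n-\alpha$ and
$$a_{j+1}=qb_j-\alpha,\qquad b_{j+1}=pa_j-\alpha;$$
as soon as some exponent becomes $\le 0$, integrating over $R^n\setminus B_R$ forces the corresponding potential to diverge, a contradiction. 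The even subsequences satisfy $a_{j+2}=pq\,a_j-\alpha(q+1)$, so $a_{2m}=(pq)^m\big(n-\alpha-\frac{\alpha(q+1)}{pq-1}\big)+\frac{\alpha(q+1)}{pq-1}$ (and similarly for $b$ with $\frac{\alpha(p+1)}{pq-1}$). Thus if $pq<1$ the subsequences converge to the negative numbers $\frac{\alpha(q+1)}{pq-1}$, $\frac{\alpha(p+1)}{pq-1}$; if $pq=1$ they decrease arithmetically to $-\infty$; and if $pq>1$ with $\max\{\frac{\alpha(p+1)}{pq-1},\frac{\alpha(q+1)}{pq-1}\}>n-\alpha$ the relevant subsequence tends to $-\infty$. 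In every such case some $a_{j_0}$ or $b_{j_0}$ drops to $\le0$ and we are done.

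\textbf{The borderline case.} There remains $pq>1$ with one of the two numbers equal to $n-\alpha$, say $\frac{\alpha(q+1)}{pq-1}=n-\alpha$ (the other case is symmetric under $(u,p,c_1)\leftrightarrow(v,q,c_2)$); then $p\le q$ and, equivalently, $v^q$ has the borderline decay $|x|^{-n}$, while $(n-\alpha)p=\alpha+n/q\le n$. Here the iteration stabilizes and, as in Step~3 of Theorem~\ref{th2.3}, I would switch to an averaging argument. From (\ref{HLSs}), $u(x)\ge\frac{c}{(R+|x|)^{n-\alpha}}\int_{B_R}v^q$ and $v(x)\ge\frac{c}{(R+|x|)^{n-\alpha}}\int_{B_R}u^p$ for all $x$. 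Raising the first to the power $p$ and integrating over $B_R$ gives $\int_{B_R}u^p\ge cR^{\,n-(n-\alpha)p}(\int_{B_R}v^q)^p$; plugging this into the second, raising to the power $q$ and integrating over $B_R$, the powers of $R$ cancel by the borderline identity, leaving $\int_{B_R}v^q\ge c(\int_{B_R}v^q)^{pq}$, so $\int_{R^n}v^q<\infty$. Carrying the same chain over the annulus $B_R\setminus B_{R/2}$ yields $\int_{B_R\setminus B_{R/2}}v^q\ge c(\int_{B_R}v^q)^{pq}$; letting $R\to\infty$ the left side tends to $0$ while the right side tends to $c(\int_{R^n}v^q)^{pq}$, forcing $v\equiv0$, a contradiction. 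This completes the ``only if'' direction. The main obstacle I anticipate is precisely the borderline bookkeeping: one must check that $\frac{\alpha(q+1)}{pq-1}=n-\alpha$ really forces $p\le q$ and the exact cancellation $q\big(n-(n-\alpha)p\big)+n-(n-\alpha)q=0$ of the $R$-exponents (both reduce to the identity $n(pq-1)=\alpha q(p+1)$), and one must arrange the two averaging inequalities so that every constant is genuinely independent of $R$; the generic recursion, though elementary, must also be treated uniformly over $pq<1$, $pq=1$ and $pq>1$.
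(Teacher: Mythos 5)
Your proposal is correct and follows essentially the same route as the paper: explicit slow-decay powers $(1+|x|^2)^{-\theta_i}$ with $2\theta_1=\alpha(q+1)/(pq-1)$, $2\theta_2=\alpha(p+1)/(pq-1)$ for existence, the coupled bootstrap of lower-bound exponents $a_{j+1}=qb_j-\alpha$, $b_{j+1}=pa_j-\alpha$ (the paper indexes the same even subsequence directly) for nonexistence off the borderline, and the annulus/averaging argument giving $\int_{B_R}v^q\ge c(\int_{B_R}v^q)^{pq}$ with the $R$-powers cancelling by the borderline identity. The only substantive difference is that you prove existence with a single slow-decay pair while the paper also records three further fast/mixed/logarithmic families, which are not needed for the iff statement.
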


\begin{proof}

{\it Step 1.} Sufficiency.

Set
\begin{equation} \label{5.2}
u(x)=\frac{1}{(1+|x|^2)^{\theta_1}}, \quad
v(x)=\frac{1}{(1+|x|^2)^{\theta_2}}.
\end{equation}
Similar to the argument in the proof of Theorem \ref{th2.3}, we can
find four pairs solutions.

(i) Take the slow rates
$$
2\theta_1=\frac{\alpha(q+1)}{pq-1}, \quad
2\theta_2=\frac{\alpha(p+1)}{pq-1}.
$$
Then $pq>1$ as well as (\ref{5.1}) lead to
$\alpha<2p\theta_1< n$ and $\alpha<2q\theta_2< n$.
Therefore,
$$
\int_{R^n}\frac{u^p(y)dy}{|x-y|^{n-\alpha}}=\frac{c_1(x)}{(1+|x|^2)^{p\theta_1-\alpha/2}}=c_1(x)v(x),
$$
$$
\int_{R^n}\frac{v^q(y)dy}{|x-y|^{n-\alpha}}=\frac{c_2(x)}{(1+|x|^2)^{q\theta_2-\alpha/2}}=c_2(x)u(x),
$$
for some double bounded functions $c_1(x)$ and $c_2(x)$.
This consequence shows that (\ref{HLSs}) has a pair of radial solutions $(u,v)$ as (\ref{5.2}).

(ii) Moreover, if the stronger condition $p,q> \frac{n}{n-\alpha}$
holds, then we can find solutions $u,v$ with
the fast decay rate $2\theta_1=2\theta_2=n-\alpha$.
Now, $2p\theta_1> n$ and $2q\theta_2> n$,
then
$$
\int_{R^n}\frac{u^p(y)dy}{|x-y|^{n-\alpha}}=\frac{c_1(x)}{(1+|x|^2)^{(n-\alpha)/2}}=c_1(x)v(x),
$$
$$
\int_{R^n}\frac{v^q(y)dy}{|x-y|^{n-\alpha}}=\frac{c_2(x)}{(1+|x|^2)^{(n-\alpha)/2}}=c_2(x)u(x),
$$
for some double bounded functions $c_1(x),c_2(x)$. Therefore,
(\ref{HLSs}) has a pair of radial solutions $(u,v)$ as (\ref{5.2}).

(iii) If another stronger condition $pq>1$ as well as
\begin{equation} \label{cn1}
\frac{\alpha}{n-\alpha}<p < \frac{n}{n-\alpha},
\end{equation}
\begin{equation} \label{cn2}
\frac{(q+1)\alpha}{pq-1} < n-\alpha,
\end{equation}
holds, we can find a pair of solutions $u,v$. Now, $u,v$ have two
different fast decay rates.

We claim that if $pq>1$,
the condition (\ref{cn1}) together with (\ref{cn2}) are stronger than (\ref{5.1}).
In fact, we first see $p \leq q$. Otherwise,
(\ref{cn1}) implies $q<p< \frac{n}{n-\alpha}$, which means $q[p(n-\alpha)-\alpha]<n$.
This contradicts (\ref{cn2}).
From (\ref{cn2}) and $p \leq q$, it follows that $pq(n-\alpha)-n > q\alpha \geq p\alpha$. This leads to
$\frac{(p+1)\alpha}{pq-1} < n-\alpha$. Combining this with (\ref{cn2}) yields (\ref{5.1}).

Take $2\theta_1=n-\alpha$,
$2\theta_2=2p\theta_1-\alpha=p(n-\alpha)-\alpha$.
Then (\ref{cn1}) and (\ref{cn2}) lead to
$\alpha<2p\theta_1< n$ and $2q\theta_2> n$.
Therefore,
$$
\int_{R^n}\frac{u^p(y)dy}{|x-y|^{n-\alpha}}=\frac{c_1(x)}{(1+|x|^2)^{p\theta_1-\alpha/2}}=c_1(x)v(x),
$$
$$
\int_{R^n}\frac{v^q(y)dy}{|x-y|^{n-\alpha}}=\frac{c_2(x)}{(1+|x|^2)^{(n-\alpha)/2}}=c_2(x)u(x),
$$
for another double bounded functions $c_1(x),c_2(x)$. Therefore,
(\ref{HLSs}) has a pair of radial solutions $(u,v)$ as (\ref{5.2}).

By the same argument above, we know that once $pq>1$ as well as
$\frac{\alpha}{n-\alpha}<q < \frac{n}{n-\alpha}$ and
$\frac{(p+1)\alpha}{pq-1} < n-\alpha$, (\ref{HLSs}) has a pair of
radial solutions $(u,v)$ as (\ref{5.2}). Now, $u,v$ decay fast by
two different rates.

(iv) We can find another pair of radial solutions to (\ref{HLSs}).
They decay with fast rates which are different from (\ref{5.2}).
Now, we assume
$$
u(x)=\frac{1}{(1+|x|^2)^{(n-\alpha)/2}}; \quad
v(x)=\frac{\log|x|}{(1+|x|^2)^{(n-\alpha)/2}}.
$$
It is easy to verify that $u,v$ also solve (\ref{HLSs}) with some
double bounded functions $c_1,c_2$.

{\it Note.} According to Corollary 1.3 (2) in \cite{SL}, if $(u,v)
\in L^{p+1}(R^n) \times L^{q+1}(R^n)$ where $p,q$ satisfy the
critical condition
$\frac{1}{p+1}+\frac{1}{q+1}=1-\frac{\alpha}{n}$, then $u,v$ decay
with only three rates as in (ii)-(iv).

{\it Step 2.} Necessity.

(i) If either $0<pq \leq 1$ or $pq>1$ and
$\max\{\frac{(p+1)\alpha}{pq-1}, \frac{(q+1)\alpha}{pq-1}\} >
n-\alpha$, we prove the nonexistence.

Assume $u,v$ are positive solutions of (\ref{HLSs}). First, for $|x|>R$,
$$
u(x) \geq c\int_{B_R(0)} \frac{dy}{|x-y|^{n-\alpha}} \geq \frac{c}{|x|^{a_0}}.
$$
Here $a_0=n-\alpha$. By this estimate, for $|x|>R$ there holds
$$
v(x) \geq c\int_{B_{|x|/2}(x)}\frac{|y|^{-pa_0}dy}{|x-y|^{n-\alpha}} \geq \frac{c}{|x|^{b_0}},
$$
where $b_0=pa_0-\alpha$. This implies
$$
u(x) \geq
c\int_{B_{|x|/2}(x)}\frac{|y|^{-qb_0}dy}{|x-y|^{n-\alpha}} \geq
\frac{c}{|x|^{a_1}},
$$
for $|x|>R$, where $a_1=qb_0-\alpha$. By induction, we obtain
that for $|x|>R$,
$$
v(x) \geq \frac{c}{|x|^{b_k}}, \quad u(x) \geq \frac{c}{|x|^{a_k}}.
$$
Here $a_0=n-\alpha$, $b_k=pa_k-\alpha$ and
$a_k=qb_{k-1}-\alpha$. Therefore, we have
$$\begin{array}{ll}
a_j&=pqa_{j-1}-\alpha(q+1)=(pq)^2a_{j-2}-\alpha(q+1)(1+pq)
=\cdots\\[3mm]
&=(pq)^ja_0-\alpha(q+1)[1+pq+\cdots+(pq)^{j-1}].
\end{array}
$$

Case I:
When $pq=1$, for some large $j_0$, it follows
$$
a_{j_0}=a_0-\alpha(q+1)j_0<0.
$$

Case II:
When $0<pq<1$, letting $j \to \infty$, we get
$$
a_j=(pq)^{j}a_0-\alpha(q+1)\frac{1-(pq)^{j}}{1-pq} \to -\frac{\alpha(q+1)}{1-pq}<0.
$$
Therefore, we can find $j_0$ such that $a_{j_0}<0$.

Case III: When $pq>1$ and $\frac{\alpha(q+1)}{pq-1} > n-\alpha$.

Now, $a_0<\frac{\alpha (q+1)}{pq-1}$.
Thus, we deduce that for some large $j_0$,
$$
a_{j_0}=(pq)^{j_0}a_0-\alpha(q+1)\frac{(pq)^{j_0}-1}{pq-1}
=(pq)^{j_0}[a_0-\frac{\alpha (q+1)}{pq-1}]+\frac{\alpha (q+1)}{pq-1}<0.
$$

Case IV: When $pq>1$ and $\frac{\alpha(p+1)}{pq-1} > n-\alpha$.

By an analogous argument of Case III, we can also find some $k_0$ such that $b_{k_0}<0$.

These results imply $u(x)=\infty$ or $v(x)=\infty$. It is
impossible. The contradiction shows the nonexistence of the
positive solutions to (\ref{HLSs}).

(ii) If $pq>1$ and $\max\{\frac{(p+1)\alpha}{pq-1}, \frac{(q+1)\alpha}{pq-1}\}
=n-\alpha$, we prove the nonexistence.

The idea is the same as Step 3 in the proof of Theorem \ref{2.3}.
Denote $B_R(0)$ by $B$. First,
$$
u(x) \geq \frac{c}{(R+|x|)^{n-\alpha}}\int_B v^q(y)dy, \quad
v(x) \geq \frac{c}{(R+|x|)^{n-\alpha}}\int_B u^p(y)dy.
$$
Thus,
$$
\int_B u^p(x)dx \geq \frac{c}{R^{p(n-\alpha)-n}}(\int_B v^q(y)dy)^p,
$$
$$
\int_B v^q(x)dx \geq \frac{c}{R^{q(n-\alpha)-n}}(\int_B u^p(y)dy)^q.
$$
Without loss of generality, assume $p \leq q$.
Combining two results above with $\frac{(q+1)\alpha}{pq-1}
=n-\alpha$ yields
$$
\int_B v^q(x)dx \geq c(\int_B v^q(y)dy)^{pq},
$$
where $c$ is independent of $R$. Letting $R \to \infty$, we get
$v \in L^q(R^n)$. On the other hand, we also obtain
$$
\int_{A_R} v^q(x)dx \geq c(\int_B v^q(y)dy)^{pq}.
$$
Letting $R \to \infty$ and noting $v \in L^q(R^n)$,
we see $v \equiv 0$. It is impossible.

Theorem \ref{th3.1} is proved.
\end{proof}

\begin{corollary} \label{coro3.2}
Let $k \in [1,n/2)$ be an integer and $pq>1$. There exist positive solutions $u,v$ of
the semilinear Lane-Emden type system
\begin{equation}
 \left \{
   \begin{array}{l}
      (-\Delta)^k u(x)=c_1(x)v^q(x)\\
      (-\Delta)^k v(x)=c_2(x)u^p(x)
   \end{array}
   \right. \label{LEs}         
 \end{equation}
for some double bounded functions $c_1(x)$ and $c_2(x)$,
if and only if
$$
\max\{\frac{2k(p+1)}{pq-1},
\frac{2k(q+1)}{pq-1}\} < n-2k.
$$
\end{corollary}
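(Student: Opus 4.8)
The plan is to obtain Corollary \ref{coro3.2} from Theorem \ref{th3.1} specialized to $\alpha = 2k$, in exactly the way Corollary \ref{coro2.4} is obtained from Theorem \ref{th2.3}; the link between the two settings is the equivalence of the polyharmonic system (\ref{LEs}) with the Riesz‑potential integral system (\ref{HLSs}) of order $2k$.

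For the sufficiency, assume $pq>1$ and $\max\{\frac{2k(p+1)}{pq-1},\frac{2k(q+1)}{pq-1}\}<n-2k$. Then Theorem \ref{th3.1} (case (i) of its Step 1, with $\alpha=2k$) supplies the explicit smooth positive pair $u(x)=(1+|x|^2)^{-\theta_1}$, $v(x)=(1+|x|^2)^{-\theta_2}$ with $\theta_1=\frac{k(q+1)}{pq-1}$ and $\theta_2=\frac{k(p+1)}{pq-1}$, solving (\ref{HLSs}). I would then apply $(-\Delta)^k$ to this pair directly: a direct iteration of the radial‑Laplacian computation behind (\ref{2.0}) shows that $(-\Delta)^k[(1+|x|^2)^{-\theta}]$ is a positive, double bounded multiple of $(1+|x|^2)^{-\theta-k}$ precisely when $2\theta<n-2k$ (its leading coefficient factors as $\prod_{i=0}^{k-1}2(\theta+i)(n-2-2\theta-2i)$, which is positive exactly under that bound). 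Since both $2\theta_1$ and $2\theta_2$ are $<n-2k$ by hypothesis, and since the elementary identities $\theta_1+k=q\theta_2$ and $\theta_2+k=p\theta_1$ hold, this gives $(-\Delta)^k u=c_1(x)v^q$ and $(-\Delta)^k v=c_2(x)u^p$ with positive, double bounded (indeed smooth) $c_1,c_2$; hence $(u,v)$ solves (\ref{LEs}).

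For the necessity, let $(u,v)$ be a positive classical solution of (\ref{LEs}) with double bounded $c_1,c_2$ and $pq>1$. As in the proof of Corollary \ref{coro2.4}, I would first invoke \cite{LGZ} (in its system form) to get the super‑polyharmonic property $(-\Delta)^i u>0$, $(-\Delta)^i v>0$ for $i=1,\dots,k-1$, and then run the argument of \cite{ChLO} to represent $u$ and $v$ by Riesz potentials of order $2k$ of $c_1 v^q$ and $c_2 u^p$ respectively. Double‑boundedness of $c_1,c_2$ then yields, for $|x|$ large, the lower bounds $u(x)\ge c|x|^{2k-n}\int_{B_R(0)}v^q$ and $v(x)\ge c|x|^{2k-n}\int_{B_R(0)}u^p$, which is all that the necessity part of Theorem \ref{th3.1} uses: the Cases I--IV iteration argument (producing an index $j_0$ with $a_{j_0}<0$, hence $u\equiv\infty$ or $v\equiv\infty$, a contradiction) and the borderline $L^p$‑integration argument both go through verbatim with $\alpha=2k$. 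This forces $\max\{\frac{2k(p+1)}{pq-1},\frac{2k(q+1)}{pq-1}\}<n-2k$.

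The main obstacle is the equivalence step inside the necessity direction — passing from a positive solution of the $2k$‑order differential system to its integral (Riesz‑potential) representation. This requires the \emph{system} version of the super‑polyharmonic lemma (only $pq>1$ is available, not $p>1$ and $q>1$ individually) together with a limiting argument over expanding balls $B_R(0)$ using the Green's function of $(-\Delta)^k$, in which the boundary contributions on $\partial B_R(0)$ must be shown to vanish along a suitable sequence $R\to\infty$ (Proposition \ref{prop2.1} and its use in Section 2 are the right tools there). Everything else — the explicit construction, the exponent bookkeeping, and quoting Theorem \ref{th3.1} — is routine.
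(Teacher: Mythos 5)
Your overall strategy --- reduce to the HLS system of order $\alpha=2k$ via the super-polyharmonic property and Riesz-potential representation, then quote Theorem \ref{th3.1} --- matches the paper, which cites \cite{LGZ} for $(-\Delta)^i u,(-\Delta)^i v>0$ when $pq>1$ and \cite{CL-2010} for the equivalence, and concludes in one line. Your necessity direction is therefore essentially the paper's argument, and you correctly isolate the delicate step (the Green's-function limit over expanding balls). One small imprecision: the iteration in Step 2 of Theorem \ref{th3.1} does not run on $u(x)\gtrsim |x|^{2k-n}\int_{B_R(0)}v^q$ alone; it needs the integral lower bound over $B_{|x|/2}(x)$ at each stage, i.e. the full Riesz representation $u(x)\gtrsim\int_{\mathbb R^n}|x-y|^{2k-n}v^q(y)\,dy$ --- which, to your credit, the super-polyharmonic reduction provides.

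The sufficiency direction is where you genuinely diverge from the paper. You propose to verify the PDE system directly on the explicit ansatz $u=(1+|x|^2)^{-\theta_1}$, $v=(1+|x|^2)^{-\theta_2}$, whereas the paper's construction passes through the integral system: given the Theorem \ref{th3.1} solution of the HLS system with double bounded $c_1,c_2$, one may take $\tilde u:=\int|x-y|^{2k-n}v^q\,dy$, $\tilde v:=\int|x-y|^{2k-n}u^p\,dy$, for which $(-\Delta)^k\tilde u=c_n v^q=c_n c_2^q\,\tilde v^q$ and $(-\Delta)^k\tilde v=c_n c_1^p\,\tilde u^p$ with double bounded $c_n c_2^q,c_n c_1^p$, and no computation of $(-\Delta)^k$ of the power ansatz is ever needed. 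Your more concrete route rests on the claim that $(-\Delta)^k(1+|x|^2)^{-\theta}$ is a \emph{positive} double-bounded multiple of $(1+|x|^2)^{-\theta-k}$ precisely when $0<2\theta<n-2k$, but the justification you offer --- positivity of the leading coefficient $\prod_{i=0}^{k-1}2(\theta+i)(n-2-2\theta-2i)$ --- only controls the sign as $|x|\to\infty$. At bounded $|x|$ the lower-order terms are of comparable size, and positivity of the full expression is not automatic from the leading term. The claim is in fact true, but because of a stronger structural fact that you should state and prove: writing $m=n-2\theta$, an induction on $k$ via the one-step identity $(-\Delta)(1+r^2)^{-\sigma}=2\sigma(m_\sigma-2)(1+r^2)^{-\sigma-1}+4\sigma(\sigma+1)(1+r^2)^{-\sigma-2}$ (with $m_\sigma=n-2\sigma$) gives
\begin{equation*}
(-\Delta)^k(1+r^2)^{-\theta}=\sum_{j=0}^{k}c_j\,(1+r^2)^{-\theta-k-j},\qquad
c_j=2^{k+j}\binom{k}{j}\,\theta(\theta+1)\cdots(\theta+k+j-1)\prod_{i=1}^{k-j}(m-2i),
\end{equation*}
and \emph{every} $c_j$ is positive once $m>2k$, i.e. once $2\theta<n-2k$ (the most restrictive factor occurring at $j=0$). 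With that in hand the expression is manifestly positive and squeezed between $c_0(1+r^2)^{-\theta-k}$ and $(\sum_j c_j)(1+r^2)^{-\theta-k}$, and your bookkeeping with $\theta_1+k=q\theta_2$, $\theta_2+k=p\theta_1$ closes the argument. So the route works, but the positivity step is a real computation, not a one-line consequence of the leading coefficient; the paper's Riesz-potential construction buys exactly the avoidance of this verification.
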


\begin{proof}
When $pq>1$, Liu, Guo and Zhang \cite{LGZ} proved $(-\Delta)^i u>0$ and $(-\Delta)^i v>0$.
Similar to the argument in \cite{CL-2010} we can also establish the
equivalence between (\ref{LEs}) and (\ref{HLSs}). So Corollary \ref{coro3.2}
is a direct corollary of Theorem \ref{th3.1} with $\alpha=2k$.
\end{proof}

\subsection{Wolff type system}

\begin{theorem} \label{th3.3}
There exist positive solutions $u,v$ of
the integral system involving the Wolff potentials
\begin{equation}
 \left \{
   \begin{array}{l}
      u(x)=c_1(x)W_{\beta,\gamma}(v^q)(x)\\
      v(x)=c_2(x)W_{\beta,\gamma}(u^p)(x)
   \end{array}
   \right. \label{Wolffs}          
 \end{equation}
for some double bounded functions $c_1(x)$ and $c_2(x)$,
if and only if $pq>(\gamma-1)^2$ and
\begin{equation} \label{5.5}
\max\{\frac{\beta\gamma(p+\gamma-1)}{pq-(\gamma-1)^2},
\frac{\beta\gamma(q+\gamma-1)}{pq-(\gamma-1)^2}\} < \frac{n-\beta\gamma}{\gamma-1}.
\end{equation}.
\end{theorem}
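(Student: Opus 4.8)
The plan is to parallel the proof of Theorem \ref{th3.1}, using the Wolff potential computations already carried out in Theorem \ref{th2.5} as the scalar model. For the sufficiency direction, I would take the explicit ansatz
\[
u(x)=\frac{1}{(1+|x|^2)^{\theta_1}},\qquad v(x)=\frac{1}{(1+|x|^2)^{\theta_2}},
\]
and, just as in Step 1 of Theorem \ref{th2.5}, split $W_{\beta,\gamma}(u^p)$ and $W_{\beta,\gamma}(v^q)$ into the integrals over $0<t<|x|/2$ and $t>|x|/2$. For suitably large $|x|$, the inner integral $\int_{B_t(x)}(1+|y|^2)^{-p\theta_1}dy$ behaves like $t^n(1+|x|^2)^{-p\theta_1}$ on the first piece and like $t^{n-2p\theta_1}$ (when $2p\theta_1<n$) on the second piece, so $W_{\beta,\gamma}(u^p)(x)$ is comparable to $(1+|x|^2)^{(\beta\gamma-2p\theta_1)/(2(\gamma-1))}$. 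Requiring this to equal a double-bounded multiple of $v(x)=(1+|x|^2)^{-\theta_2}$ gives $\theta_2=\frac{2p\theta_1-\beta\gamma}{2(\gamma-1)}$, and the symmetric relation $\theta_1=\frac{2q\theta_2-\beta\gamma}{2(\gamma-1)}$. Solving this linear system in $\theta_1,\theta_2$ yields precisely
\[
2\theta_1=\frac{\beta\gamma(q+\gamma-1)}{pq-(\gamma-1)^2},\qquad 2\theta_2=\frac{\beta\gamma(p+\gamma-1)}{pq-(\gamma-1)^2},
\]
which are positive and satisfy $\beta\gamma<2p\theta_1<n$, $\beta\gamma<2q\theta_2<n$ exactly when $pq>(\gamma-1)^2$ and (\ref{5.5}) hold; these two-sided bounds are what make all the integral estimates valid, so the slowly decaying radial pair $(u,v)$ solves (\ref{Wolffs}) for appropriate double-bounded $c_1,c_2$. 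As in Theorem \ref{th3.1}, one can additionally record fast-decaying pairs (taking $2\theta_i=\frac{n-\beta\gamma}{\gamma-1}$, or mixed rates, or a logarithmic correction) under stronger hypotheses, but the slow pair already establishes existence under the stated condition.

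For the necessity direction, I would reproduce the two-case argument of Step 2 of Theorem \ref{th3.1}. Suppose $(u,v)$ is a positive solution. Since $\int_{B_1(0)}u^p\,dy\geq c$ and $\int_{B_1(0)}v^q\,dy\geq c$, the lower bound (\ref{low-ini}) of Theorem \ref{th2.5} gives $u(x),v(x)\geq c|x|^{-a_0}$ with $a_0=\frac{n-\beta\gamma}{\gamma-1}$ for large $|x|$. Feeding this into the Wolff potential as in (\ref{2.11}) and iterating back and forth between the two equations produces $u(x)\geq c|x|^{-a_k}$, $v(x)\geq c|x|^{-b_k}$ with
\[
b_k=\frac{p}{\gamma-1}a_k-\frac{\beta\gamma}{\gamma-1},\qquad a_k=\frac{q}{\gamma-1}b_{k-1}-\frac{\beta\gamma}{\gamma-1},
\]
so $a_j=\frac{pq}{(\gamma-1)^2}a_{j-1}-\frac{\beta\gamma}{\gamma-1}\bigl(\frac{q}{\gamma-1}+1\bigr)$ (one must watch the exact constant: $a_j=\frac{pq}{(\gamma-1)^2}a_{j-1}-\frac{\beta\gamma(q+\gamma-1)}{(\gamma-1)^2}$). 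When $pq<(\gamma-1)^2$, $pq=(\gamma-1)^2$, or $pq>(\gamma-1)^2$ with $\frac{\beta\gamma(q+\gamma-1)}{pq-(\gamma-1)^2}>\frac{n-\beta\gamma}{\gamma-1}$, a geometric-series computation exactly as in Cases I--III of Theorem \ref{th3.1} shows $a_{j_0}\leq 0$ for some $j_0$, whence $u(x)=\infty$, a contradiction; the symmetric failure of the $q$-inequality handles the $b$-sequence. For the borderline case $pq>(\gamma-1)^2$ with the max in (\ref{5.5}) equal to $\frac{n-\beta\gamma}{\gamma-1}$, I would imitate Substep 2.2 of Theorem \ref{th2.5}: use the Hölder trick there (splitting the $t$-integral) to get, after assuming WLOG the $q$-equation is the tight one, $v^q(x)\geq cR^{q(\beta\gamma-n)/(\gamma-1)}\bigl(\int_{B_{R/4}}u^p\bigr)^{q/(\gamma-1)}$ and a symmetric bound; composing the two and invoking the borderline equality collapses the exponents so that $\int_B v^q\geq c(\int_B v^q)^{pq/(\gamma-1)^2}$ with $c$ independent of $R$, giving $v\in L^q(\mathbb{R}^n)$ first, then $\int_{A_R}v^q\to 0$ forces $v\equiv 0$.

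The main obstacle is the borderline case in the necessity part: unlike the scalar situation, one has two coupled exponent inequalities, and the Hölder/integral-decay argument must be set up carefully so that the composed lower bound produces a power of $\int_B v^q$ strictly larger than $1$ (this needs $pq>(\gamma-1)^2$, which is available) while the $R$-dependent prefactors cancel exactly on the critical hyperplane. Bookkeeping of the constants $\beta\gamma(q+\gamma-1)$ versus $\beta\gamma(p+\gamma-1)$ through the iteration — and making sure the ``WLOG $p\le q$'' reduction is legitimate, i.e. that the larger of the two quantities in (\ref{5.5}) is the one governing nonexistence — is the delicate part; the existence direction and the non-borderline nonexistence cases are routine adaptations of Theorems \ref{th2.5} and \ref{th3.1}.
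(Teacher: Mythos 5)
Your proposal follows the paper's own proof essentially step by step: the existence direction uses the same power-law ansatz with the same slow decay rates $2\theta_1,2\theta_2$ (and you correctly note that the fast and mixed-rate pairs are extra, not needed for the sufficiency direction), and the nonexistence direction uses the same shooting/iteration scheme with $a_k,b_k$ for the non-borderline cases plus the same Hölder-plus-annulus argument for the borderline case. The recursion constant you compute, $\frac{\beta\gamma}{\gamma-1}\bigl(\frac{q}{\gamma-1}+1\bigr)=\frac{\beta\gamma(q+\gamma-1)}{(\gamma-1)^2}$, already matches the paper's — the self-correction in your write-up was unnecessary since the two expressions are identical — and the "WLOG $p\le q$" reduction you worry about is precisely how the paper normalizes the borderline case (it assumes $\frac{\beta\gamma(q+\gamma-1)}{pq-(\gamma-1)^2}=\frac{n-\beta\gamma}{\gamma-1}$ and then verifies by direct computation that the $R$-exponent vanishes after composing the two bounds).
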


\begin{proof}
{\it Step 1.} Existence.

Insert (\ref{5.2}) into $W_{\beta,\gamma}(u^p)$ and $W_{\beta,\gamma}(v^q)$.
Similar to the argument in the proof of Theorem \ref{th2.5}, we also discuss
in four cases.

(i) Take the slow rates
$$
2\theta_1=\frac{\beta\gamma(q+\gamma-1)}{pq-(\gamma-1)^2}, \quad
2\theta_2=\frac{\beta\gamma(p+\gamma-1)}{pq-(\gamma-1)^2}.
$$
Then, $pq>(\gamma-1)^2$ and (\ref{5.5}) lead to
$\beta\gamma<2p\theta_1< n$ and $\beta\gamma<2q\theta_2< n$.
Therefore,
$$
W_{\beta,\gamma}(u^p)(x)=\frac{c_1(x)}{(1+|x|^2)^{\frac{2p\theta_1-\beta\gamma}
{2(\gamma-1)}}}=c_1(x)v(x),
$$
$$
W_{\beta,\gamma}(v^q)(x)=\frac{c_2(x)}{(1+|x|^2)^{\frac{2q\theta_2-\beta\gamma}
{2(\gamma-1)}}}=c_2(x)u(x),
$$
for some double bounded functions $c_1(x)$, $c_2(x)$.
This implies that
(\ref{Wolffs}) has a pair of radial solutions $(u,v)$ as (\ref{5.2}).

(ii) If the stronger condition $p,q> \frac{n(\gamma-1)}{n-\beta\gamma}$
holds, we take the fast rate $2\theta_1=2\theta_2
=\frac{n-\beta\gamma}{\gamma-1}$.
Then $2p\theta_1> n$ and $2q\theta_2> n$,
and hence
$$
W_{\beta,\gamma}(u^p)(x)=\frac{c_1(x)}{(1+|x|^2)^{\frac{n-\beta\gamma}{2(\gamma-1)}}}
=c_1(x)v(x),
$$
$$
W_{\beta,\gamma}(v^q)(x)=\frac{c_2(x)}{(1+|x|^2)^{\frac{n-\beta\gamma}{2(\gamma-1)}}}
=c_2(x)u(x),
$$
for another double bounded functions $c_1(x), c_2(x)$. This implies that
(\ref{Wolffs}) has a pair of radial solutions $(u,v)$ as (\ref{5.2})
with fast decay rates.

(iii) Similar to the argument in Theorem \ref{th3.1},
if $pq>(\gamma-1)^2$, the condition
$$
\frac{\beta\gamma}{n-\beta\gamma}<p< \frac{n(\gamma-1)}{n-\beta\gamma}, \quad
\frac{\beta\gamma(q+\gamma-1)}{pq-(\gamma-1)^2} < \frac{n-\beta\gamma}{\gamma-1}
$$
is also stronger than (\ref{5.5}). When this stronger condition
holds, then we take $2\theta_1=\frac{n-\beta\gamma}{\gamma-1}$, $2\theta_2=
\frac{2p\theta_1-\beta\gamma}{\gamma-1}=p\frac{n-\beta\gamma}{(\gamma-1)^2}
-\frac{\beta\gamma}{\gamma-1}$. Therefore,
$\beta\gamma<2p\theta_1< n$ and $2q\theta_2> n$,
and hence
$$
W_{\beta,\gamma}(u^p)(x)=\frac{c_1(x)}{(1+|x|^2)^{\frac{2p\theta_1-\beta\gamma}{2(\gamma-1)}}}
=c_1(x)v(x),
$$
$$
W_{\beta,\gamma}(v^q)(x)=\frac{c_2(x)}{(1+|x|^2)^{\frac{n-\beta\gamma}{2(\gamma-1)}}}
=c_2(x)u(x),
$$
for another double bounded functions $c_1(x)$, $c_2(x)$. This shows
(\ref{Wolffs}) has radial solutions as (\ref{5.2}).

Similar to the argument above, if another stronger condition
$pq>(\gamma-1)^2$ as well as
$$
\frac{\beta\gamma}{n-\beta\gamma}<q< \frac{n(\gamma-1)}{n-\beta\gamma},\quad
\frac{\beta\gamma(p+\gamma-1)}{pq-(\gamma-1)^2} < \frac{n-\beta\gamma}{\gamma-1}
$$
holds, (\ref{Wolffs}) also has radial solutions as (\ref{5.2})
with two different fast rates
$2\theta_2=\frac{n-\beta\gamma}{\gamma-1}$, $2\theta_1=
q\frac{n-\beta\gamma}{(\gamma-1)^2}-\frac{\beta\gamma}{\gamma-1}$.

(iv) Eq. (\ref{Wolffs}) also has another pair of radial solutions
which also decay fast by two different rates. One decays with
$\frac{n-\beta\gamma}{\gamma-1}$, and another decays with
logarithmic order. Now, we assume
$$
u(x)=\frac{1}{(1+|x|^2)^{\frac{n-\beta\gamma}{2(\gamma-1)}}};
\quad v(x)=\frac{(\log|x|)^{\frac{1}{\gamma-1}}}
{(1+|x|^2)^{\frac{n-\beta\gamma}{2(\gamma-1)}}}.
$$
It is easy to verify that $u,v$ solve (\ref{Wolffs}) with some
double bounded functions $c_1,c_2$.

{\it Step 2.} Nonexistence.

{\it Substep 2.1.} Suppose either $0<pq \leq (\gamma-1)^2$ or
$$
\max\{\frac{\beta\gamma(p+\gamma-1)}{pq-(\gamma-1)^2},
\frac{\beta\gamma(q+\gamma-1)}{pq-(\gamma-1)^2}\} > \frac{n-\beta\gamma}{\gamma-1}.
$$

Assume $u,v$ are positive solutions of (\ref{Wolffs}).
Noting $\int_{B_R(0)}v^q(y)dy \geq c$, we obtain that for $|x|>R$,
$$
u(x) \geq \int_{|x|+R}^\infty(\frac{\int_{B_R(0)} v^q(y)dy}
{t^{n-\beta\gamma}})^{\frac{1}{\gamma-1}}  \frac{dt}{t}
\geq c\int_{|x|+R}^\infty t^{-\frac{n-\beta\gamma}{\gamma-1}}\frac{dt}{t}
\geq \frac{c}{|x|^{a_0}}.
$$
Here $a_0=\frac{n-\beta\gamma}{\gamma-1}$. By this estimate, for $|x|>R$, there holds
$$
v(x) \geq c\int_{2|x|}^\infty[\int_{B_{t-|x|}(0) \setminus B_{(t-|x|)/2}(0)}
\frac{dy}{|y|^{pa_0}} t^{\beta\gamma-n}]^{\frac{1}{\gamma-1}} \frac{dt}{t}
\geq c\int_{2|x|}^\infty t^{\frac{\beta\gamma-pa_0}{\gamma-1}} \frac{dt}{t}.
$$

When $\beta\gamma-pa_0 \geq 0$, we see $v(x)=\infty$ for $|x|>R$.
This implies the nonexistence
of positive solutions of (\ref{Wolffs}) since $R$ is an arbitrary positive number.
When $\beta\gamma-pa_0 < 0$, then
$$
v(x) \geq \frac{c}{|x|^{b_0}}, \quad for ~|x|>R,
$$
where $b_0=\frac{pa_0-\beta\gamma}{\gamma-1}$. Similarly, using this estimate, we also obtain
that if $\beta\gamma-qb_0 \geq 0$, then $u(x)=\infty$; if $\beta\gamma-qb_0<0$, then
$$
u(x) \geq \frac{c}{|x|^{a_1}}, \quad for ~|x|>R,
$$
where $a_1=qb_0-\beta\gamma$.

For $k=1,2,\cdots$, write
$$
a_0=\frac{n-\beta\gamma}{\gamma-1}, \quad
b_k=\frac{pa_k-\beta\gamma}{\gamma-1}, \quad
a_k=\frac{qb_{k-1}-\beta\gamma}{\gamma-1}.
$$

By induction, we can obtain the following conclusions:

(i) If $a_k < 0$, then $u(x)=\infty$. This leads to the nonexistence.
If $a_k \geq 0$, then $u(x) \geq \frac{c}{|x|^{a_k}}$ implies
$v(x) \geq \frac{c}{|x|^{b_k}}$.

(ii) If $b_k < 0$, then $v(x)=\infty$. This also leads to the nonexistence.
If $b_k \geq 0$, then $v(x) \geq \frac{c}{|x|^{b_k}}$ implies
$u(x) \geq \frac{c}{|x|^{a_{k+1}}}$.

In view of
$$
a_k=\frac{q}{\gamma-1}b_{k-1}-\frac{\beta\gamma}{\gamma-1}
=\frac{pq}{(\gamma-1)^2}a_{k-1}
-\frac{\beta\gamma}{\gamma-1}\frac{q+\gamma-1}{\gamma-1},
$$
we deduce that
$$\begin{array}{ll}
a_j&=\displaystyle\frac{pq}{(\gamma-1)^2}a_{j-1}-\frac{\beta\gamma}{\gamma-1}
\frac{q+\gamma-1}{\gamma-1}\\[3mm]
&=(\displaystyle\frac{pq}{(\gamma-1)^2})^2a_{j-2}-\frac{\beta\gamma}{\gamma-1}
\frac{q+\gamma-1}{\gamma-1}(1+\frac{pq}{(\gamma-1)^2})
=\cdots\\[3mm]
&=(\displaystyle\frac{pq}{(\gamma-1)^2})^ja_0-\frac{\beta\gamma}{\gamma-1}\frac{q+\gamma-1}{\gamma-1}
[1+\frac{pq}{(\gamma-1)^2}+\cdots+(\frac{pq}{(\gamma-1)^2})^{j-1}].
\end{array}
$$

When $\frac{pq}{(\gamma-1)^2}=1$, then for some large $j_0$,
$$
a_{j_0}=a_0-\frac{\beta\gamma}{\gamma-1}\frac{q+\gamma-1}{\gamma-1}j_0
< 0.
$$
This implies $u(x)=\infty$.

When $0<\frac{pq}{(\gamma-1)^2}<1$, letting $j \to \infty$, we get
$$
a_j=(\frac{pq}{(\gamma-1)^2})^{j}a_0-\frac{\beta\gamma}{\gamma-1}\frac{q+\gamma-1}{\gamma-1}
\frac{1-(\frac{pq}{(\gamma-1)^2})^{j}}{1-\frac{pq}{(\gamma-1)^2}}
\to -\frac{\beta\gamma(q+\gamma-1)}{(\gamma-1)^2-pq}<0.
$$
Therefore, we can find $j_0$ such that $a_{j_0}< 0$.
This implies $u(x)=\infty$.

When $\frac{pq}{(\gamma-1)^2}>1$ and
$\frac{\beta\gamma(q+\gamma-1)}{pq-(\gamma-1)^2} > \frac{n-\beta\gamma}{\gamma-1}$,
there holds $a_0<\frac{\beta\gamma(q+\gamma-1)}{pq-(\gamma-1)^2}$.
We deduce that
$$\begin{array}{ll}
a_{j_0}&=(\displaystyle\frac{pq}{(\gamma-1)^2})^{j_0}a_0
-\frac{\beta\gamma}{\gamma-1}\frac{q+\gamma-1}{\gamma-1}
\frac{(\frac{pq}{(\gamma-1)^2})^{j_0}-1}{\frac{pq}{(\gamma-1)^2}-1}\\[3mm]
&=(\displaystyle\frac{pq}{(\gamma-1)^2})^{j_0}
[a_0-\frac{\beta\gamma(q+\gamma-1)}{pq-(\gamma-1)^2}]
+\frac{\beta\gamma(q+\gamma-1)}{pq-(\gamma-1)^2}<0
\end{array}
$$
for some large $j_0$. We also see $u(x)=\infty$.

When $\frac{pq}{(\gamma-1)^2}>1$ and
$\frac{\beta\gamma(p+\gamma-1)}{pq-(\gamma-1)^2} >\frac{n-\beta\gamma}{\gamma-1}$,
there also holds
$a_0<\frac{\beta\gamma(p+\gamma-1)}{pq-(\gamma-1)^2}$.
By the same argument above, we
handle $b_k$ instead of $a_k$, we can also find some $k_0$
such that $b_{k_0}<0$. This implies
$v(x)=\infty$.

{\it Substep 2.2.} Suppose $pq>(\gamma-1)^2$ and
$$
\max\{\frac{\beta\gamma(p+\gamma-1)}{pq-(\gamma-1)^2},
\frac{\beta\gamma(q+\gamma-1)}{pq-(\gamma-1)^2}\} = \frac{n-\beta\gamma}{\gamma-1}.
$$

First, write $H:=\int_{B_t(x)}v^q(y)dy$. By the H\"older inequality,
$$\begin{array}{ll}
\displaystyle\int_0^R H dt &\leq (\displaystyle\int_0^R H^{\frac{1}{\gamma-1}}
t^{\frac{\beta\gamma-n}{\gamma-1}-1}dt)^{\gamma-1}
(\int_0^R t^{\frac{n-\beta\gamma+\gamma-1}{2-\gamma}}dt)^{2-\gamma}\\[3mm]
&=CR^{n-\beta\gamma+1} (\displaystyle\int_0^R (\frac{H}{t^{n-\beta\gamma}})^{\frac{1}{\gamma-1}}
\frac{dt}{t})^{\gamma-1}.
\end{array}
$$
Therefore, exchanging the order of variables yields
$$\begin{array}{ll}
u(x) \geq c\displaystyle\int_0^R (\frac{H}{t^{n-\beta\gamma}})^{\frac{1}{\gamma-1}}
\frac{dt}{t} &\geq cR^{-\frac{n-\beta\gamma+1}{\gamma-1}}(\displaystyle\int_0^RHdt)^{\frac{1}{\gamma-1}}\\[3mm]
&\geq cR^{-\frac{n-\beta\gamma}{\gamma-1}}(\displaystyle\int_{B_{R/4}}v^q(y)dy)^{\frac{1}{\gamma-1}}.
\end{array}
$$
Thus,
\begin{equation} \label{santian}
u^p(x) \geq cR^{-p\frac{n-\beta\gamma}{\gamma-1}}(\int_{B_{R/4}}v^q(y)dy)^{\frac{p}{\gamma-1}}.
\end{equation}
Similarly,
\begin{equation} \label{ertian}
v^q(x) \geq cR^{-q\frac{n-\beta\gamma}{\gamma-1}}(\int_{B_{R/4}}u^p(y)dy)^{\frac{q}{\gamma-1}}.
\end{equation}

Without loss of generality, we suppose
\begin{equation} \label{yitian}
\frac{\beta\gamma(q+\gamma-1)}{pq-(\gamma-1)^2}=\frac{n-\beta\gamma}{\gamma-1}.
\end{equation}
Inserting (\ref{santian}) into (\ref{ertian}) yields
\begin{equation} \label{wutian}
v^q(x) \geq cR^{-q\frac{n-\beta\gamma}{\gamma-1}-pq\frac{n-\beta\gamma}{(\gamma-1)^2}+\frac{nq}{\gamma-1}}
(\int_{B_{R/4}}v^q(y)dy)^{\frac{pq}{(\gamma-1)^2}}.
\end{equation}
Integrating on $B_{R/4}$, we get
\begin{equation} \label{sitian}
\int_{B_{R/4}}v^q(x)dx \geq cR^{-q\frac{n-\beta\gamma}{\gamma-1}(1+\frac{p}{\gamma-1})+n(\frac{q}{\gamma-1}+1)}
(\int_{B_{R/4}}v^q(y)dy)^{\frac{pq}{(\gamma-1)^2}}.
\end{equation}

We claim that the exponent of $R$ is zero. In fact, $q\beta\gamma+n(\gamma-1)
=\beta\gamma(q+\gamma-1)+(n-\beta\gamma)(\gamma-1)$. By (\ref{yitian}), we obtain
$$
q\beta\gamma+n(\gamma-1)=[pq-(\gamma-1)^2]\frac{n-\beta\gamma}{\gamma-1}+
(\gamma-1)^2\frac{n-\beta\gamma}{\gamma-1}=pq\frac{n-\beta\gamma}{\gamma-1}.
$$
Multiplying by $(\gamma-1)^{-1}$, we have
$$
n(\frac{q}{\gamma-1}+1)=q\frac{n-\beta\gamma}{\gamma-1}+\frac{pq}{\gamma-1}
\frac{n-\beta\gamma}{\gamma-1}=q\frac{n-\beta\gamma}{\gamma-1}(1+\frac{p}{\gamma-1}).
$$
The claim is proved.

Letting $R \to \infty$ in (\ref{sitian}), we see that $v \in L^q(R^n)$ in view of $pq>(\gamma-1)^2$.

Integrating (\ref{wutian}) on $A_R:=B_{R/4} \setminus B_{R/8}$ and letting $R \to \infty$, we also have
$\int_{R^n} v^q(y) dy =0$. It is impossible.

Thus, we complete our proof.
\end{proof}

\subsection{$\gamma$-Laplace system}

\begin{theorem} \label{th3.4}
(1) If $pq>(\gamma-1)^2$ and
\begin{equation} \label{tiao}
\max\{\frac{\gamma(q+\gamma-1)}{pq-(\gamma-1)^2},
\frac{\gamma(p+\gamma-1)}{pq-(\gamma-1)^2}\}<\frac{n-\gamma}{\gamma-1},
\end{equation}
then there exist positive solutions $u,v$ of the
$\gamma$-Laplace system
\begin{equation}
 \left \{
   \begin{array}{l}
      -\Delta_\gamma u(x)=c_1(x)v^q(x), \quad x \in R^n,\\
      -\Delta_\gamma v(x)=c_2(x)u^p(x), \quad x \in R^n
   \end{array}
   \right.   \label{pLs}       
 \end{equation}
for some double bounded $c_1(x)$ and $c_2(x)$.

(2) For any double bounded functions $c_1(x)$ and $c_2(x)$, if one of the
following conditions holds:

(i) $0<pq\leq (\gamma-1)^2$;

(ii) $pq>(\gamma-1)^2$ and
\begin{equation} \label{you1}
\max\{\frac{\gamma(q+\gamma-1)}{pq-(\gamma-1)^2},
\frac{\gamma(p+\gamma-1)}{pq-(\gamma-1)^2}\} \geq \frac{n-\gamma}{\gamma-1},
\end{equation}
then (\ref{pLs}) has no positive solutions $u,v$ satisfying
$\inf_{R^n}u=\inf_{R^n}v=0$.
\end{theorem}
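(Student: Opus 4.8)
The plan is to reduce part (2) to the nonexistence half of Theorem~\ref{th3.3} via the Kilpel\"ainen--Mal\'y Wolff-potential estimate — exactly the device used to pass from Theorem~\ref{th2.6}(2) to Theorem~\ref{th2.5} — and to handle part (1) by an explicit construction in the spirit of Theorem~\ref{th2.6}(1), using the radial identity (\ref{2.3}).

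\emph{Part (1), existence.} I look for a pair of slowly decaying radial solutions
$$
u(x)=\frac{1}{(1+|x|^m)^{\theta_1}},\qquad v(x)=\frac{1}{(1+|x|^m)^{\theta_2}},\qquad m=\frac{\gamma}{\gamma-1},
$$
with $\theta_1,\theta_2>0$ to be fixed. Inserting $u$ into (\ref{2.3}) gives
$$
-\Delta_\gamma u=\frac{(m\theta_1)^{\gamma-1}}{(1+|x|^m)^{(\theta_1+1)(\gamma-1)}}\Big[\frac{n+[n-(\theta_1+1)\gamma]|x|^m}{1+|x|^m}\Big],
$$
and similarly for $v$. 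Matching the power of $(1+|x|^m)$ with $v^q=(1+|x|^m)^{-q\theta_2}$ and with $u^p=(1+|x|^m)^{-p\theta_1}$ forces $(\theta_1+1)(\gamma-1)=q\theta_2$ and $(\theta_2+1)(\gamma-1)=p\theta_1$, a linear system whose unique solution is
$$
\theta_1=\frac{(\gamma-1)(q+\gamma-1)}{pq-(\gamma-1)^2},\qquad \theta_2=\frac{(\gamma-1)(p+\gamma-1)}{pq-(\gamma-1)^2},
$$
both positive precisely because $pq>(\gamma-1)^2$. The bracketed factor in (\ref{2.3}) takes values between $n$ and $n-(\theta_i+1)\gamma$, hence is double bounded exactly when $n-(\theta_i+1)\gamma>0$, i.e. $\theta_i<(n-\gamma)/\gamma$; multiplying by $\gamma/(\gamma-1)$ one checks this pair of inequalities is the same as (\ref{tiao}). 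Therefore $-\Delta_\gamma u=c_1(x)v^q$ and $-\Delta_\gamma v=c_2(x)u^p$ with $c_i(x)=(m\theta_i)^{\gamma-1}[n+(n-(\theta_i+1)\gamma)|x|^m]/(1+|x|^m)$ double bounded, proving existence. (As in cases (ii)--(iv) of Theorem~\ref{th3.3} one could also produce fast-decaying pairs, but they are not needed here.)

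\emph{Part (2), nonexistence.} Suppose $u,v>0$ solve (\ref{pLs}) with $\inf_{R^n}u=\inf_{R^n}v=0$. As in the proof of Theorem~\ref{th2.6}(2), Corollary 4.13 in \cite{KM} yields $C>0$ with $\frac1C W_{1,\gamma}(c_1v^q)(x)\le u(x)\le C\,W_{1,\gamma}(c_1v^q)(x)$, and symmetrically for $v$. Since $c_1$ is double bounded, $\int_{B_t(x)}c_1v^q\,dy$ is comparable to $\int_{B_t(x)}v^q\,dy$ uniformly in $t,x$, and pulling the constant through the $\frac1{\gamma-1}$-power in the definition of the Wolff potential shows $W_{1,\gamma}(c_1v^q)$ is a double bounded multiple of $W_{1,\gamma}(v^q)$; hence $u(x)=K_1(x)W_{1,\gamma}(v^q)(x)$ and $v(x)=K_2(x)W_{1,\gamma}(u^p)(x)$ for double bounded $K_1,K_2$. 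Thus $(u,v)$ solves the Wolff system (\ref{Wolffs}) with $\beta=1$. But with $\beta=1$ the exclusion region of Theorem~\ref{th3.3} — namely $0<pq\le(\gamma-1)^2$, or $pq>(\gamma-1)^2$ with $\max\{\frac{\gamma(p+\gamma-1)}{pq-(\gamma-1)^2},\frac{\gamma(q+\gamma-1)}{pq-(\gamma-1)^2}\}\ge\frac{n-\gamma}{\gamma-1}$ — is exactly conditions (i) and (ii) of part (2), so Theorem~\ref{th3.3} forbids a positive solution, a contradiction.

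\emph{Main obstacle.} The reduction in part (2) is essentially routine once one verifies the comparison $W_{1,\gamma}(c\,f)\sim W_{1,\gamma}(f)$ and that $\beta=1$ (with $1<\gamma<n$) is an admissible case of Theorem~\ref{th3.3}. The slightly delicate point is the bookkeeping in part (1): one must check that $n>(\theta_i+1)\gamma$ for $i=1,2$ is exactly (\ref{tiao}), and that the borderline equality case — where the bracket in (\ref{2.3}) degenerates to $n/(1+|x|^m)$ and is no longer double bounded — is genuinely excluded by the strict inequality in (\ref{tiao}), so that the slow-decay family alone suffices for the ``if'' direction.
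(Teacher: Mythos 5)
Your proof is correct and follows essentially the same route as the paper's: the existence part is the paper's slow-decay case (i) using the radial identity (\ref{2.3}) with the same exponents $m\theta_1=\frac{\gamma(q+\gamma-1)}{pq-(\gamma-1)^2}$, $m\theta_2=\frac{\gamma(p+\gamma-1)}{pq-(\gamma-1)^2}$, and the nonexistence part is the paper's reduction via Corollary 4.13 of \cite{KM} to the Wolff system (\ref{Wolffs}) with $\beta=1$ and Theorem~\ref{th3.3}. The only difference is that the paper's Step 1 additionally exhibits fast-decay and logarithmic pairs under stronger hypotheses, which you correctly observe are unnecessary for the "if" direction.
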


\begin{proof}

(1) {\it Existence}.

Let $m=\frac{\gamma}{\gamma-1}$. Take
$$
u(x)=\frac{1}{(1+|x|^m)^{\theta_1}},\quad
v(x)=\frac{1}{(1+|x|^m)^{\theta_2}}.
$$
Similar to the calculation in (\ref{2.3}),
we also obtain
$$\begin{array}{ll}
-\Delta_\gamma u(x)&=\frac{(m\theta_1)^{\gamma-1}}
{(1+r^m)^{(\theta_1+1)(\gamma-1)}}
[\frac{n+[n-(\theta_1+1)\gamma]r^m}{1+r^m}],
\end{array}
$$
$$\begin{array}{ll}
-\Delta_\gamma v(x)&=\frac{(m\theta_2)^{\gamma-1}}
{(1+r^m)^{(\theta_2+1)(\gamma-1)}}
[\frac{n+[n-(\theta_2+1)\gamma]r^m}{1+r^m}].
\end{array}
$$
Therefore, the signs of both sides of the results above show four
cases.

(i) Take the slow decay rates
$$
m\theta_1=\frac{\gamma(q+\gamma-1)}{pq-(\gamma-1)^2}, \quad
m\theta_2=\frac{\gamma(p+\gamma-1)}{pq-(\gamma-1)^2}.
$$
Then $pq>(\gamma-1)^2$ and (\ref{tiao}) lead to
\begin{equation} \label{jian}
(\theta_1+1)\gamma<n, \quad \hbox{and}
\quad (\theta_2+1)\gamma<n,
\end{equation}
and hence
$$
-\Delta_\gamma u(x)=\frac{c_1(r)}{(1+r^m)^{(\theta_1+1)(\gamma-1)}}
=c_1(x)v^q(x),
$$
$$
-\Delta_\gamma v(x)=\frac{c_2(r)}{(1+r^m)^{(\theta_2+1)(\gamma-1)}}
=c_2(x)u^p(x).
$$
This shows that
(\ref{pLs}) has the radial solutions as (\ref{5.2})
with slow decay rates.

(ii) Moreover, if $p=q=\frac{n(\gamma-1)+\gamma}{n-\gamma}$,
then we take the fast decay rates $m\theta_1=m\theta_2=\frac{n-\gamma}{\gamma-1}$.
This leads to $n=(\theta_1+1)\gamma=(\theta_2+1)\gamma$. Therefore,
$$
-\Delta_\gamma u(x)=\frac{c_1(r)}{(1+r^m)^{(\theta_1+1)(\gamma-1)+1}}
=c_1(x)v^q(x),
$$
$$
-\Delta_\gamma v(x)=\frac{c_2(r)}{(1+r^m)^{(\theta_2+1)(\gamma-1)+1}}
=c_2(x)u^p(x).
$$
This shows that
(\ref{pLs}) has the radial solutions as (\ref{5.2})
with fast decay rates.

(iii) If $\frac{\gamma(q+\gamma)}{pq-(\gamma-1)^2}=\frac{n-\gamma}{\gamma-1}$,
then we take other fast decay rates $m\theta_1=\frac{n-\gamma}{\gamma-1}$,
$m\theta_2=p\frac{n-\gamma}{(\gamma-1)^2}-\frac{\gamma}{\gamma-1}$. Thus,
$n=(\theta_1+1)\gamma$, $n>(\theta_2+1)\gamma$. Therefore,
$$
-\Delta_\gamma u(x)=\frac{c_1(r)}{(1+r^m)^{(\theta_1+1)(\gamma-1)+1}}
=c_1(x)v^q(x),
$$
$$
-\Delta_\gamma v(x)=\frac{c_2(r)}{(1+r^m)^{(\theta_2+1)(\gamma-1)}}
=c_2(x)u^p(x).
$$
This shows that
(\ref{pLs}) has the radial solutions as (\ref{5.2})
with the second fast decay rates.

Similar to the argument above, if $\frac{\gamma(p+\gamma)}{pq-(\gamma-1)^2}
=\frac{n-\gamma}{\gamma-1}$
holds, (\ref{Wolffs}) also has radial solutions as (\ref{5.2})
with the third fast rates
$m\theta_2=\frac{n-\gamma}{\gamma-1}$, $m\theta_1=
q\frac{n-\gamma}{(\gamma-1)^2}-\frac{\gamma}{\gamma-1}$.

(iv) Eq. (\ref{Wolffs}) also has another pair of radial solutions
which also decay fast with the different rates. One decays with
$\frac{n-\gamma}{\gamma-1}$, and another decays with
logarithmic order. Now, we assume
$$
u(x)=\frac{1}{(1+|x|^m)^{\frac{n-\gamma}{\gamma}}};
\quad v(x)=\frac{(\log|x|)^{\frac{1}{\gamma-1}}}
{(1+|x|^m)^{\frac{n-\gamma}{\gamma}}}.
$$
It is easy to verify that $u,v$ solve (\ref{Wolffs}) with some
double bounded functions $c_1,c_2$.

(2) {\it Nonexistence}.

Suppose $u,v$ are positive solutions of (\ref{pLs}) satisfying
$\inf_{R^n}u=\inf_{R^n}v=0$. According to Corollary 4.13 in \cite{KM},
there exists $C>0$ such that
$$
\frac{1}{C}W_{1,\gamma}(c_1v^q)(x) \leq u(x) \leq CW_{1,\gamma}(c_1v^q)(x),
$$
$$
\frac{1}{C}W_{1,\gamma}(c_2u^p)(x) \leq v(x) \leq CW_{1,\gamma}(c_2u^p)(x).
$$
Since $c_1$ and $c_2$ are double bounded, we can find two other double bounded functions
$K_1(x)$ and $K_2(x)$ such that
$$
u(x)=K_1(x)W_{1,\gamma}(v^q)(x), \quad v(x)=K_2(x)W_{1,\gamma}(u^p)(x).
$$
By Theorem \ref{th3.3} with $\beta=1$, we can see the nonexistence.
\end{proof}

\section{Finite energy solutions: scalar equations}

In this section, we consider the critical conditions associated with the
existence of the positive solutions when the coefficient $c(x) \equiv Constant$.
Without loss of generality, we take $c(x) \equiv 1$.

\subsection{Critical exponents and scaling invariants}

Take a scaling transform
$u_\mu(x)=\mu^{\frac{n-2}{2}} u(\mu x)$.
Assume $u$ solves $-\Delta u=u^{\frac{n+2}{n-2}}$.
By a simply calculation, we have
$$
-\Delta u_\mu=u_\mu^{\frac{n+2}{n-2}} \quad and
\quad \|u\|_{\frac{2n}{n-2}}=\|u_\mu\|_{\frac{2n}{n-2}}.
$$
For the higher order equation, the corresponding result above is still true.

Furthermore, we have the more general result.

\begin{theorem} \label{th4.1}
The HLS type eauation
\begin{equation}
u(x)=\int_{R^n}\frac{u^p(y)dy}{|x-y|^{n-\alpha}}
\label{3.1}
\end{equation}
and the energy $\|u\|_{L^{p+1}(R^n)}$
are invariant under the scaling transform, if and only if
\begin{equation}
p=\frac{n+\alpha}{n-\alpha}.
\label{3.2}
\end{equation}
\end{theorem}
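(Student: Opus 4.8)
The plan is to run the scaling computation by hand and read off the exponent constraint. Write $u_\mu(x)=\mu^\sigma u(\mu x)$, where $\sigma$ is for the moment a free real parameter, and let $u$ be a nontrivial positive solution of \eqref{3.1}. First I would compute how the Riesz--potential nonlinearity acts on $u_\mu$: substituting $z=\mu y$ and using $|x-z/\mu|^{n-\alpha}=\mu^{-(n-\alpha)}|\mu x-z|^{n-\alpha}$, the Jacobian factor $\mu^{-n}$ combines with $\mu^{\sigma p}$ and $\mu^{n-\alpha}$ to give $\mu^{\sigma p-\alpha}$, so
$$
\int_{R^n}\frac{u_\mu^p(y)\,dy}{|x-y|^{n-\alpha}}
=\mu^{\sigma p-\alpha}\int_{R^n}\frac{u^p(z)\,dz}{|\mu x-z|^{n-\alpha}}
=\mu^{\sigma p-\alpha}\,u(\mu x),
$$
the last equality using that $u$ solves \eqref{3.1} at the point $\mu x$. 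Hence $u_\mu$ again solves \eqref{3.1} if and only if $\mu^{\sigma p-\alpha}u(\mu x)=\mu^\sigma u(\mu x)$ for all $\mu>0$; as this is an identity in $\mu$ and $u\not\equiv 0$, it is equivalent to $\sigma(p-1)=\alpha$, that is $\sigma=\frac{\alpha}{p-1}$ (in particular no scaling can preserve \eqref{3.1} when $p=1$, since $\alpha>0$).

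Next I would treat the energy. One change of variables $z=\mu x$ gives
$$
\|u_\mu\|_{L^{p+1}(R^n)}^{p+1}=\mu^{\sigma(p+1)-n}\,\|u\|_{L^{p+1}(R^n)}^{p+1},
$$
so $\|u_\mu\|_{L^{p+1}(R^n)}=\|u\|_{L^{p+1}(R^n)}$ for every $\mu>0$ if and only if $\sigma(p+1)=n$, that is $\sigma=\frac{n}{p+1}$.

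Combining the two, a single scaling transform leaves both \eqref{3.1} and the energy $\|u\|_{L^{p+1}(R^n)}$ invariant precisely when these two prescribed values of $\sigma$ agree, i.e. $\frac{\alpha}{p-1}=\frac{n}{p+1}$, equivalently $\alpha(p+1)=n(p-1)$, equivalently $(n-\alpha)p=n+\alpha$, which is exactly \eqref{3.2}. For the converse, when $p=\frac{n+\alpha}{n-\alpha}$ a direct check shows $\frac{\alpha}{p-1}=\frac{n}{p+1}=\frac{n-\alpha}{2}$, so taking $\sigma=\frac{n-\alpha}{2}$ makes both the equation and the energy invariant, which closes the equivalence.

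I do not expect a genuine obstacle here: the argument is pure exponent bookkeeping, with the only slightly delicate point being that the assertion is conditional on solutions of \eqref{3.1}. Accordingly the \emph{only if} direction should be read as follows: if a nontrivial positive solution of \eqref{3.1} exists and some scaling $u\mapsto u_\mu$ preserves both \eqref{3.1} and the $L^{p+1}$ energy, then necessarily $p=\frac{n+\alpha}{n-\alpha}$.
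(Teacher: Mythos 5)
Your proposal is correct and follows essentially the same computation as the paper: apply the scaling $u_\mu(x)=\mu^\sigma u(\mu x)$, change variables in the Riesz potential to extract the exponent $\mu^{\sigma p-\alpha}$ forcing $\sigma=\alpha/(p-1)$, scale the $L^{p+1}$ norm to force $\sigma=n/(p+1)$, and equate. Your added remarks (the case $p=1$, and the precise reading of the ``only if'' direction as conditional on existence of a nontrivial solution) are sensible clarifications but do not change the substance of the argument.
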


\begin{proof}
Take the scaling transform
$$
u_\mu(x)=\mu^\sigma u(\mu x).
$$
Then
$$\begin{array}{ll}
u_\mu(x)&=\mu^\sigma\displaystyle\int_{R^n}\frac{u^p(y)dy}{|\mu x-y|^{n-\alpha}}
=\mu^\sigma\int_{R^n}\frac{\mu^n u^p(\mu z)dz}{|\mu(x-z)|^{n-\alpha}}\\[3mm]
&=\mu^\sigma\displaystyle\int_{R^n}\frac{\mu^{n-p\sigma}u_\mu^p(z)dz}
{\mu^{n-\alpha}|x-z|^{n-\alpha}}
=\mu^{\sigma-p\sigma+\alpha}
\displaystyle\int_{R^n}\frac{u_\mu^p(y)dy}{|x-y|^{n-\alpha}}.
\end{array}
$$
If $u_\mu$ still solves (\ref{3.1}), then
\begin{equation} \label{3.4}
\sigma=\frac{\alpha}{p-1}.
\end{equation}

Next,
$$
\int_{R^n}u_\mu^{p+1}(x)dx=\int_{R^n}[\mu^\sigma u(\mu x)]^{p+1}dx
=\mu^{\sigma(p+1)-n}\int_{R^n}u^{p+1}(z)dz.
$$
If the $L^{p+1}(R^n)$-norm is invariant, then there holds
$$
\sigma=\frac{n}{p+1}.
$$
Combining this with (\ref{3.4}), we get (\ref{3.2}).

On the contrary, if (\ref{3.2}) is true, then we can also deduce
the invariance by the same calculation above.
\end{proof}

\begin{theorem} \label{th4.2}
The Wolff type equation
\begin{equation} \label{3.3}
u(x)=\int_0^\infty(\frac{\int_{B_t(x)}u^p(y)dy}
{t^{n-\beta\gamma}})^{\frac{1}{\gamma-1}} \frac{dt}{t}
\end{equation}
and the energy $\|u\|_{L^{p+\gamma-1}(R^n)}$
are invariant under the scaling transform, if and only if
\begin{equation}
p=\frac{n+\beta\gamma}{n-\beta\gamma}(\gamma-1).
\label{3.5}
\end{equation}
In addition, (\ref{3.3}) and another energy $\|u\|_{L^{p+1}(R^n)}$
are invariant under the scaling transform, if and only if
\begin{equation}
p=\gamma^*-1 \quad (where \quad
\gamma^*=\frac{n\gamma}{n-\beta\gamma}). \label{3.5*}
\end{equation}
\end{theorem}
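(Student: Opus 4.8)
The plan is to follow the proof of Theorem~\ref{th4.1} verbatim, substituting the Wolff potential for the Riesz potential and keeping careful track of the powers of $\mu$. First I would set $u_\mu(x)=\mu^\sigma u(\mu x)$ and compute the effect of the scaling on $W_{\beta,\gamma}(u_\mu^p)$. Writing $u_\mu^p(y)=\mu^{p\sigma}u^p(\mu y)$, changing variables $z=\mu y$ in the ball integral so that $\int_{B_t(x)}u_\mu^p(y)\,dy=\mu^{p\sigma-n}\int_{B_{\mu t}(\mu x)}u^p(z)\,dz$, and then substituting $s=\mu t$ in the outer integral (for which $\frac{dt}{t}=\frac{ds}{s}$ and $t^{n-\beta\gamma}=\mu^{-(n-\beta\gamma)}s^{n-\beta\gamma}$), I expect to arrive at the clean identity
$$
W_{\beta,\gamma}(u_\mu^p)(x)=\mu^{\frac{p\sigma-\beta\gamma}{\gamma-1}}\,W_{\beta,\gamma}(u^p)(\mu x).
$$

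Imposing that $u_\mu$ again solve (\ref{3.3}) and using that $u$ does, comparison of the $\mu$-exponents forces
$$
\sigma=\frac{\beta\gamma}{p-\gamma+1},
$$
the exact analogue of (\ref{3.4}). Next I would impose invariance of the energy: from $\int_{R^n}u_\mu^{p+\gamma-1}\,dx=\mu^{\sigma(p+\gamma-1)-n}\int_{R^n}u^{p+\gamma-1}\,dx$, invariance of $\|u\|_{L^{p+\gamma-1}}$ requires $\sigma=\frac{n}{p+\gamma-1}$; equating the two expressions for $\sigma$ and clearing denominators gives $\beta\gamma(p+\gamma-1)=n(p-\gamma+1)$, which rearranges to (\ref{3.5}). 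For the second statement one repeats this step with $p+1$ in place of $p+\gamma-1$: invariance of $\|u\|_{L^{p+1}}$ forces $\sigma=\frac{n}{p+1}$, and equating with $\sigma=\frac{\beta\gamma}{p-\gamma+1}$ yields $\beta\gamma(p+1)=n(p-\gamma+1)$, i.e. $p=\frac{n\gamma}{n-\beta\gamma}-1=\gamma^*-1$, which is (\ref{3.5*}).

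For the converse directions I would simply read the computation backwards: if (\ref{3.5}) holds then $\frac{\beta\gamma}{p-\gamma+1}=\frac{n}{p+\gamma-1}$, and taking $\sigma$ equal to this common value makes both (\ref{3.3}) and the $L^{p+\gamma-1}$ norm invariant by the displayed identities; similarly, if (\ref{3.5*}) holds the same choice of $\sigma$ makes (\ref{3.3}) and the $L^{p+1}$ norm invariant. This is a routine scaling computation with no genuine obstacle; the only point demanding care is the exponent bookkeeping in the Wolff potential, where the factor $\mu^{p\sigma-n}$ produced by rescaling the ball integral and the factor $\mu^{n-\beta\gamma}$ produced by rescaling $t^{n-\beta\gamma}$ must be combined into $\mu^{p\sigma-\beta\gamma}$ \emph{before} the exponent $\tfrac{1}{\gamma-1}$ is applied — mishandling this single step would distort both critical exponents.
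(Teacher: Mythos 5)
Your computation is correct and matches the paper's argument step for step: change variables in the inner ball integral, substitute $s=\mu t$ in the outer integral, combine the $\mu$-powers into $\mu^{p\sigma-\beta\gamma}$ before raising to $\tfrac{1}{\gamma-1}$, and then equate the two expressions for $\sigma$ coming from equation-invariance and energy-invariance. The only (purely cosmetic) difference is that you package the scaling as a clean homogeneity identity $W_{\beta,\gamma}(u_\mu^p)(x)=\mu^{\frac{p\sigma-\beta\gamma}{\gamma-1}}W_{\beta,\gamma}(u^p)(\mu x)$, while the paper writes the same manipulation directly on $u_\mu(x)=\mu^\sigma W_{\beta,\gamma}(u^p)(\mu x)$ to arrive at $u_\mu=\mu^{\sigma+\frac{\beta\gamma-p\sigma}{\gamma-1}}W_{\beta,\gamma}(u_\mu^p)$; both lead to the same $\sigma=\frac{\beta\gamma}{p-\gamma+1}$ and the same critical exponents.
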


\begin{proof}
Take the scaling transform
$$
u_\mu(x)=\mu^\sigma u(\mu x).
$$
Then
$$\begin{array}{ll}
u_\mu(x)&=\mu^\sigma\displaystyle\int_0^\infty(\frac{\int_{B_t(\mu x)}u^p(y)dy}
{t^{n-\beta\gamma}})^{\frac{1}{\gamma-1}} \frac{dt}{t}\\[3mm]
&=\mu^\sigma\displaystyle\int_0^\infty(\frac{\int_{B_t(\mu x)} u^p(\mu z)d(\mu z)}
{t^{n-\beta\gamma}})^{\frac{1}{\gamma-1}} \frac{dt}{t}\\[3mm]
&=\mu^\sigma\displaystyle\int_0^\infty(\frac{\int_{B_s(x)}\mu^{n-p\sigma} u_\mu^p(z)dz}
{(\mu s)^{n-\beta\gamma}})^{\frac{1}{\gamma-1}} \frac{ds}{s}\\[3mm]
&=\mu^{\sigma+\frac{\beta\gamma-p\sigma}{\gamma-1}}
\displaystyle\int_0^\infty(\frac{\int_{B_s(x)}u_\mu^p(z)dz}
{s^{n-\beta\gamma}})^{\frac{1}{\gamma-1}} \frac{ds}{s}.
\end{array}
$$
Thus, $u_\mu$ solves (\ref{3.3}) if and only if
\begin{equation}
\sigma=\frac{\beta\gamma}{p-\gamma+1}.
\label{3.6}
\end{equation}

Next,
$$
\int_{R^n}u_\mu^{p+\gamma-1}(x)dx=\int_{R^n}[\mu^\sigma u(\mu x)]^{p+\gamma-1}dx
=\mu^{\sigma(p+\gamma-1)-n}\int_{R^n}u^{p+\gamma-1}(z)dz.
$$
The $L^{p+\gamma-1}(R^n)$-norm is invariant, if and only if
$$
\sigma=\frac{n}{p+\gamma-1}.
$$
Combining this with (\ref{3.6}), we get (\ref{3.5}).

At last,
$$
\int_{R^n}u_\mu^{p+1}(x)dx=\int_{R^n}[\mu^\sigma u(\mu x)]^{p+1}dx
=\mu^{\sigma(p+1)-n}\int_{R^n}u^{p+1}(z)dz.
$$
The $L^{p+1}(R^n)$-norm is invariant, if and only if
$$
\sigma=\frac{n}{p+1}.
$$
Combining this with (\ref{3.6}), we get (\ref{3.5*}).
\end{proof}

Since the corresponding result of the $\gamma$-Laplace equation
can not be covered by that of the Wolff type equation, we should
point out the following conclusion.

\begin{theorem} \label{th4.3}
The $\gamma$-Laplace equation
\begin{equation} \label{pL}
-\Delta_\gamma u(x)=u^p(x)
\end{equation}
and the energy $\|u\|_{L^{p+\gamma-1}(R^n)}$ are invariant under
the scaling transform, if and only if (\ref{3.5}) with $\beta=1$
holds. In addition, (\ref{pL}) and another energy
$\|u\|_{L^{p+1}(R^n)}$ are invariant under the scaling transform,
if and only if (\ref{3.5*}) with $\beta=1$ holds.
\end{theorem}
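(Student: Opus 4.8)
The plan is to repeat, in structure, the argument used for Theorems \ref{th4.1} and \ref{th4.2}. I would apply the scaling transform $u_\mu(x)=\mu^\sigma u(\mu x)$; first determine the exponent $\sigma$ forced by requiring that $u_\mu$ again solve (\ref{pL}); then determine the exponent $\sigma$ forced by requiring that the relevant energy norm be unchanged; finally impose that these two values of $\sigma$ agree, which pins down the critical exponent. The converse (``if'') direction is obtained by reading the same computation backwards.

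First I would compute how the quasilinear operator scales. Since $\nabla u_\mu(x)=\mu^{\sigma+1}(\nabla u)(\mu x)$, we get $|\nabla u_\mu|^{\gamma-2}\nabla u_\mu(x)=\mu^{(\sigma+1)(\gamma-1)}\bigl(|\nabla u|^{\gamma-2}\nabla u\bigr)(\mu x)$, and applying $\mathrm{div}$ contributes one further factor $\mu$. Using that $u$ solves (\ref{pL}), this yields
\[
-\Delta_\gamma u_\mu(x)=\mu^{(\sigma+1)(\gamma-1)+1}(-\Delta_\gamma u)(\mu x)=\mu^{(\sigma+1)(\gamma-1)+1-p\sigma}\,u_\mu^p(x).
\]
Thus $u_\mu$ solves (\ref{pL}) if and only if the exponent of $\mu$ vanishes, i.e. $(\sigma+1)(\gamma-1)+1-p\sigma=0$, which gives $\sigma=\frac{\gamma}{p-\gamma+1}$; this is precisely (\ref{3.6}) with $\beta=1$. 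On the other hand, from $\int_{R^n}u_\mu^{s}(x)\,dx=\mu^{\sigma s-n}\int_{R^n}u^{s}(z)\,dz$, invariance of $\|u\|_{L^{p+\gamma-1}(R^n)}$ forces $\sigma=\frac{n}{p+\gamma-1}$, while invariance of $\|u\|_{L^{p+1}(R^n)}$ forces $\sigma=\frac{n}{p+1}$. Equating $\frac{\gamma}{p-\gamma+1}$ with $\frac{n}{p+\gamma-1}$ gives (\ref{3.5}) with $\beta=1$, and equating it with $\frac{n}{p+1}$ gives (\ref{3.5*}) with $\beta=1$.

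The computation is entirely routine, so there is no real obstacle; the only point that deserves emphasis is why this statement is not a special case of Theorem \ref{th4.2}. The $\gamma$-Laplacian is comparable to the Wolff potential $W_{1,\gamma}$ only in the sense of a two-sided double-bounded estimate, not via an identity, so the scaling invariance of the Wolff integral equation (\ref{3.3}) does not automatically transfer to (\ref{pL}); the homogeneity computation above has to be carried out directly. Doing so explains why the equation-invariance exponent $\sigma=\frac{\gamma}{p-\gamma+1}$ here coincides with the one obtained in the Wolff case at $\beta=1$, and hence why the two critical conditions take the same algebraic form with $\beta$ set to $1$. The only bookkeeping to be careful about is matching the homogeneity degree $(\sigma+1)(\gamma-1)+1$ of $-\Delta_\gamma$ against the degree $\sigma s-n$ of the Lebesgue integral of the $s$-th power.
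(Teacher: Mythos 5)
Your proposal is correct and follows essentially the same route as the paper: compute the homogeneity degree of $-\Delta_\gamma u_\mu$ (your $(\sigma+1)(\gamma-1)+1$ is the paper's $\sigma(\gamma-1)+\gamma$), force it to match $p\sigma$ to get $\sigma=\frac{\gamma}{p-\gamma+1}$, then equate with the $\sigma$ forced by energy invariance. Your side remark on why this cannot simply be quoted from the Wolff case (the comparison with $W_{1,\gamma}$ is only two-sided, not an identity) matches the point the authors make in Remark 4.2.
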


\begin{proof}
Suppose $u_\mu$ is a solution of (\ref{pL}). Then
$$
-div(|\nabla u_\mu|^{\gamma-2}\nabla u_\mu)=u_\mu^p.
$$
$$
-\mu^{\sigma(\gamma-1)}
div_x(|\nabla_x u(\mu x)|^{\gamma-2}\nabla_x u(\mu x))=\mu^{p\sigma}u^p(\mu x).
$$
Let $y=\mu x$, then
$$
-\mu^{\sigma(\gamma-1)+\gamma}
div_y(|\nabla_y u(y)|^{\gamma-2}\nabla_y u(y))=\mu^{p\sigma}u^p(y).
$$
This result shows that the equation is invariant if and only if
$$
\sigma=\frac{\gamma}{p-\gamma+1}.
$$
By the same argument as in Theorem \ref{th4.2},
the invariance of the energy is equivalent to
$$
\sigma=\frac{n}{p+\gamma-1}.
$$
Eliminating $\sigma$ from the two formulas above
yields $p=\frac{n+\gamma}{n-\gamma}(\gamma-1)$.

The proof that (\ref{3.5*}) with $\beta=1$ is the sufficient and
necessary condition is the same as the argument above.
\end{proof}

\subsection{HLS type equation}

\begin{theorem} \label{th4.4}
Assume $u>0$ is a classical solution of
\begin{equation} \label{hoPDE}
-\Delta u(x)=u^p(x), \quad x \in R^n.
\end{equation}
Assume $u \in L^{2^*}(R^n)$. Then $\nabla u \in L^2(R^n)$ if and only if
$u \in L^{p+1}(R^n)$.
\end{theorem}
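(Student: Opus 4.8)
The plan is to test the equation against $u\phi_R^2$, where $\phi_R$ is a standard cutoff: $\phi_R \in C_c^\infty(B_{2R})$, $\phi_R \equiv 1$ on $B_R$, $0 \le \phi_R \le 1$ and $|\nabla \phi_R| \le C/R$. Since $u$ is a classical ($C^2$) solution, multiplying $-\Delta u = u^p$ by $u\phi_R^2$ and integrating by parts over $B_{2R}$ is legitimate (the boundary terms vanish because $\phi_R$ is compactly supported) and gives the energy identity
$$
\int_{R^n} |\nabla u|^2 \phi_R^2\,dx + 2\int_{R^n} u\,\phi_R\, \nabla u \cdot \nabla \phi_R\,dx = \int_{R^n} u^{p+1}\phi_R^2\,dx .
$$
For each fixed $R$ all three integrals are finite (everything lives on $B_{2R}$ and $u\in C^2$); the real task is to bound them uniformly in $R$ and then let $R\to\infty$.

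The key point is that the cutoff error term is negligible. By H\"older's inequality with exponents $2^*/2$ and $n/2$ (note $1-2/2^* = 2/n$) together with the support of $\nabla\phi_R$,
$$
\int_{R^n} u^2 |\nabla \phi_R|^2\,dx \le \frac{C}{R^2}\int_{B_{2R}\setminus B_R} u^2\,dx \le \frac{C}{R^2}\Big(\int_{B_{2R}\setminus B_R} u^{2^*}\,dx\Big)^{2/2^*} |B_{2R}\setminus B_R|^{2/n} .
$$
Since $|B_{2R}\setminus B_R|^{2/n} \le CR^2$, the powers of $R$ cancel exactly, and therefore
$$
\int_{R^n} u^2 |\nabla \phi_R|^2\,dx \le C\Big(\int_{B_{2R}\setminus B_R} u^{2^*}\,dx\Big)^{2/2^*} \longrightarrow 0 \quad (R\to\infty),
$$
because $u\in L^{2^*}(R^n)$. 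This is the only step where the hypothesis $u\in L^{2^*}$ is essentially used, and it relies on the borderline value $2^* = \frac{2n}{n-2}$ to make the scaling close; I expect this to be the only genuine subtlety in the argument.

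It then remains to read off both implications from the identity. If $u\in L^{p+1}(R^n)$, I estimate the cross term by Young's inequality, $2\big|\int u\phi_R\nabla u\cdot\nabla\phi_R\big| \le \frac{1}{2}\int|\nabla u|^2\phi_R^2 + 2\int u^2|\nabla\phi_R|^2$, and absorb the gradient term to get $\frac{1}{2}\int|\nabla u|^2\phi_R^2 \le \|u\|_{L^{p+1}}^{p+1} + 2\int u^2|\nabla\phi_R|^2 \le C$ uniformly in $R$; since $\phi_R\equiv1$ on $B_R$, letting $R\to\infty$ yields $\nabla u\in L^2(R^n)$. Conversely, if $\nabla u\in L^2(R^n)$, then from the identity and Cauchy--Schwarz,
$$
\int_{R^n} u^{p+1}\phi_R^2\,dx \le \|\nabla u\|_{L^2}^2 + 2\|\nabla u\|_{L^2}\Big(\int_{R^n} u^2|\nabla\phi_R|^2\,dx\Big)^{1/2} \le C
$$
uniformly in $R$, and letting $R\to\infty$ (using $\phi_R\equiv1$ on $B_R$ and monotone convergence) gives $u\in L^{p+1}(R^n)$. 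No restriction on $p$ is needed beyond $p>0$, and the whole proof uses only the classical regularity of $u$ and the borderline integrability $u\in L^{2^*}$; apart from the vanishing of the cutoff error discussed above, the rest is a routine limiting argument.
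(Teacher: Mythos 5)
Your proof is correct. The forward direction ($u\in L^{p+1}\Rightarrow\nabla u\in L^2$) is essentially the same as the paper's: both multiply by $u\zeta_R^2$ (or $u\phi_R^2$), integrate by parts, use Young's inequality, and absorb the gradient term, with $u\in L^{2^*}$ controlling the cutoff error $\int u^2|\nabla\zeta_R|^2\,dx$. For the converse, however, you take a genuinely different and simpler route. The paper establishes the converse via a second (un-cut-off) integration by parts on $B_{3R}(0)$, producing a boundary term $\int_{\partial B_{3R}}u\,\partial_\nu u\,ds$; it then has to invoke Proposition~\ref{prop2.1} to select a sequence $R_j\to\infty$ along which $R\int_{\partial B_R}(|\nabla u|^2+u^{2^*})\,ds\to 0$, and finally control the boundary term by H\"older. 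You instead read the converse directly off the same cutoff energy identity that handled the forward direction, bounding the cross term by Cauchy--Schwarz: since $\nabla u\in L^2$ and your sharpened H\"older estimate shows $\int u^2|\nabla\phi_R|^2\,dx\to 0$ (not merely $\le C$, which is all the paper claims), you get $\int u^{p+1}\phi_R^2\,dx\to\|\nabla u\|_2^2$, hence $u\in L^{p+1}$ by monotone convergence. Your argument thus avoids the auxiliary sequence $R_j$ and the boundary-term analysis entirely, at the cost of nothing: you recover the same identity $\|\nabla u\|_2^2=\|u\|_{p+1}^{p+1}$ in the limit. The paper's choice of the boundary-term route appears to be for stylistic uniformity with the Pohozaev-type arguments elsewhere in Section~4, but your version is the more economical proof of this particular statement.
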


A classical positive solution $u \in L^{2^*}(R^n)$ of
(\ref{hoPDE}) is called {\it finite energy solution}, if $u \in
L^{p+1}(R^n)$ or $\nabla u \in L^2(R^n)$.

\begin{proof}
Take smooth function $\zeta(x)$ satisfying
$$\begin{array}{lll}
&\zeta(x)=1, \quad &for~ |x| \leq 1;\\
&\zeta(x) \in [0,1], \quad &for~ |x| \in [1,2];\\
&\zeta(x)=0, \quad &for~ |x| \geq 2.
\end{array}
$$
Define the cut-off function
\begin{equation} \label{cut}
\zeta_R(x)=\zeta(\frac{x}{R}).
\end{equation}

Multiplying (\ref{hoPDE}) by $u\zeta_R^{2}$ and integrating on $D:=
B_{3R}(0)$, we have
$$
-\int_Du\zeta_R^{2}\Delta udx=\int_D u^{p+1}\zeta_R^{2}dx.
$$
Integrating by parts, we obtain
\begin{equation} \label{L1}
\int_D|\nabla u|^2\zeta_R^{2}dx+2\int_D u\zeta_R \nabla u
\nabla\zeta_R dx=\int_D u^{p+1}\zeta_R^{2}dx.
\end{equation}
Applying the Young inequality, we get
\begin{equation} \label{L2}
|\int_D u\zeta_R \nabla u
\nabla\zeta_R dx| \leq
\delta \int_D|\nabla u|^2 \zeta_R^{2}dx +C\int_D u^2 |\nabla\zeta_R|^2 dx
\end{equation}
for any $\delta \in (0,1/2)$.
If $u \in L^{2^*}(R^n)$, we can find $C>$ which is independent of
$R$ such that
\begin{equation} \label{L3}
\int_D u^2 |\nabla\zeta_R|^2 dx \leq C.
\end{equation}
If $u \in L^{p+1}(R^n)\cap L^{2^*}(R^n)$, then (\ref{L1})-(\ref{L3}) imply
$\int_D|\nabla u|^2\zeta_R^{2}dx \leq C$. Letting $R \to \infty$ yields
$$
\nabla u \in L^2(R^n).
$$
This and $u \in L^{2^*}(R^n)$ show that for some $R=R_j \to \infty$,
$$
R\int_{\partial D}(|\nabla u|^2+u^{2^*})ds \to 0,
$$
by Proposition \ref{prop2.1}. Therefore,
\begin{equation} \label{L4}
|\int_{\partial D}u \partial_{\nu}uds| \leq (\int_{\partial D}u^{2^*} ds)^{1/2^*}
(\int_{\partial D} |\nabla u|^2 ds)^{1/2} R^{(n-1)({1/2-1/2^*})} \to 0,
\end{equation}
when $R \to \infty$. Multiplying (\ref{hoPDE}) by $u$ yields
\begin{equation} \label{L5}
\int_D u^{p+1} dx=\int_D|\nabla u|^2dx-\int_{\partial D}u \partial_{\nu}uds.
\end{equation}
Letting $R \to \infty$ and using the result above, we have
$\|\nabla u\|_2^2=\|u\|_{p+1}^{p+1}$.

If $\nabla u \in L^2(R^n)$ and
$u \in L^{2^*}(R^n)$, (\ref{L4}) still holds.
If letting $R \to \infty$ in (\ref{L5})
and inserting (\ref{L4}) into it, we obtain
$\|u\|_{p+1}^{p+1}=\|\nabla u\|_2^2$ and hence
$u \in L^{p+1}(R^n)$.
\end{proof}

Next, we use the Pohozaev type identity in integral forms to
discuss the existence of the finite energy solutions of
(\ref{3.1}).
A positive classical solution $u$ of (\ref{3.1}) is called {\it finite
energy solution}, if $u \in L^{p+1}(R^n)$.

\begin{theorem} \label{th4.5}
The HLS type integral equation (\ref{3.1})
has positive classical solution in $L^{p+1}(R^n)$ if and only if
(\ref{3.2}) holds.
\end{theorem}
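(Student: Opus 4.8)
The plan is to prove the two implications separately, the substantial one being necessity via a Pohozaev-type identity in integral form. For sufficiency, assume (\ref{3.2}). Then (\ref{3.1}) is solvable: by Lieb's computation (see also Chen, Li and Ou \cite{ChLO}) the functions $u(x)=c\bigl(\frac{t}{t^2+|x|^2}\bigr)^{(n-\alpha)/2}$ solve (\ref{3.1}) for suitable $c$, and since $p+1=\frac{2n}{n-\alpha}$ we have $u^{p+1}(x)\sim |x|^{-(n-\alpha)(p+1)}=|x|^{-2n}$ as $|x|\to\infty$, with $u$ bounded near the origin; hence $u\in L^{p+1}(R^n)$. This settles one direction.

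For necessity, let $u>0$ be a classical solution of (\ref{3.1}) with $u\in L^{p+1}(R^n)$. I would first carry out a regularity and decay reduction: by the Hardy--Littlewood--Sobolev inequality together with the regularity lifting technique (cf. \cite{JL}, \cite{Hang}), the finite-energy hypothesis forces $u$ to be bounded and to decay at infinity, with enough integrability of $u^p$, $u^{p+1}$ and the relevant convolutions to legitimize differentiation of (\ref{3.1}) under the integral sign and the manipulations below. Then I would establish the identity by localization. Set $I_R:=\int_{B_R(0)}(x\cdot\nabla u)\,u^p\,dx$. Integrating by parts,
\[
I_R=\frac{R}{p+1}\int_{\partial B_R}u^{p+1}\,ds-\frac{n}{p+1}\int_{B_R}u^{p+1}\,dx,
\]
and by Proposition \ref{prop2.1} there are $R_j\to\infty$ along which the boundary term vanishes, so $I_{R_j}\to-\frac{n}{p+1}\|u\|_{p+1}^{p+1}$. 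On the other hand, differentiating (\ref{3.1}) gives $x\cdot\nabla u(x)=(\alpha-n)\int_{R^n}|x-y|^{\alpha-n-2}\,x\cdot(x-y)\,u^p(y)\,dy$, and splitting the $y$-integral over $B_R$ and its complement,
\[
I_R=(\alpha-n)\int_{B_R}\!\!\int_{B_R}|x-y|^{\alpha-n-2}\,x\cdot(x-y)\,u^p(x)u^p(y)\,dx\,dy+E_R .
\]
Interchanging $x$ and $y$ in the double integral and averaging, the identity $x\cdot(x-y)+y\cdot(y-x)=|x-y|^2$ collapses it to $\frac{\alpha-n}{2}\int_{B_R}\!\int_{B_R}|x-y|^{\alpha-n}u^p(x)u^p(y)\,dx\,dy$, which increases to $\frac{\alpha-n}{2}\|u\|_{p+1}^{p+1}$. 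Equating the two expressions for $I_{R_j}$ and checking $E_{R_j}\to0$ yields $-\frac{n}{p+1}\|u\|_{p+1}^{p+1}=\frac{\alpha-n}{2}\|u\|_{p+1}^{p+1}$, and since $\|u\|_{p+1}^{p+1}>0$ this forces $\frac{n}{p+1}=\frac{n-\alpha}{2}$, i.e. (\ref{3.2}).

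The main obstacle I anticipate is the control of the tail term $E_R=(\alpha-n)\int_{B_R}\int_{R^n\setminus B_R}|x-y|^{\alpha-n-2}\,x\cdot(x-y)\,u^p(x)u^p(y)\,dx\,dy$ and the rigorous justification of differentiating (\ref{3.1}) under the integral: both rest on the a priori boundedness and decay obtained in the first step, and this is where the finite-energy hypothesis $u\in L^{p+1}(R^n)$ is used in an essential way, well beyond merely rendering $\|u\|_{p+1}$ finite. A secondary technical point is arranging that the radii $R_j$ supplied by Proposition \ref{prop2.1} simultaneously kill the surface integral and are compatible with the vanishing of $E_{R_j}$.
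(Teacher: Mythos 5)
Your plan is correct and follows the same strategy as the paper: derive a Pohozaev-type identity in integral form by differentiating (\ref{3.1}), test against $u^p$, symmetrize the double integral via $x\cdot(x-y)+y\cdot(y-x)=|x-y|^2$, and integrate by parts on the left side. The sufficiency direction is handled identically (citing \cite{Lieb}, \cite{ChLO} and noting the explicit extremal decays like $|x|^{-(n-\alpha)}$, so $u^{p+1}\in L^1$).

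The one place where you and the paper diverge is in how the cut-off is applied, and this changes the technical burden noticeably. The paper does not truncate the double integral: it differentiates the scaling identity $u(\mu x)=\mu^\alpha\int |x-z|^{\alpha-n}u^p(\mu z)\,dz$ to get $x\cdot\nabla u(x)=\alpha u(x)+\int |x-z|^{\alpha-n}\,z\cdot\nabla u^p(z)\,dz$, then integrates by parts only in the inner $z$-variable over $B_R$. The resulting boundary term is a single spherical integral $R\int_{\partial B_R}|x-z|^{\alpha-n}u^p(z)\,ds$, which is killed by a H\"older argument: it is bounded by $CR^{\alpha-n+\frac{n}{p+1}}\bigl(R\int_{\partial B_R}u^{p+1}ds\bigr)^{p/(p+1)}$, the exponent $\alpha-n+\frac{n}{p+1}$ is negative because Theorem \ref{th2.3} forces $p\geq\frac{n}{n-\alpha}$, and the second factor vanishes along a subsequence by Proposition \ref{prop2.1}. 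After this the $x$-integration and symmetrization go through over all of $R^n$ with no tail. Your scheme instead truncates the \emph{double} integral to $B_R\times B_R$, leaving the cross term $E_R$ over $B_R\times(R^n\setminus B_R)$. Controlling this is genuinely harder: the integrand is $\sim |x||x-y|^{\alpha-n-1}u^p(x)u^p(y)$, which near the boundary $\partial B_R$ and near the diagonal is more singular than what Proposition \ref{prop2.1} gives you for free, and for small $\alpha$ the $|x-y|^{\alpha-n-1}$ kernel is not locally integrable on its own. You anticipate this and propose to import boundedness and decay via regularity lifting, which should work, but it is extra machinery that the paper avoids entirely. You also do not invoke the a priori bound $p\geq\frac{n}{n-\alpha}$, which is an essential ingredient in the paper's control of the boundary term; whichever localization you use, that restriction on $p$ (or the decay it implies) will likely be needed somewhere. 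So: same identity, same conclusion, but your localization is the less economical one, and the estimate on $E_R$ is the spot where your sketch is not yet a proof.
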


\begin{proof}
If (\ref{3.2}) holds, (\ref{3.1}) exists a unique class of finite energy solutions
(cf. \cite{ChLO} or \cite{Lieb}):
$$
u(x)=c(\frac{t}{t^2+|x-x_0|^2})^{(n-\alpha)/2}.
$$
Here $c,t$ are positive constants.

On the contrary,
if $u \in L^{p+1}(R^n)$ solves (\ref{3.1}), we claim that (\ref{3.2}) is true.
In fact, for any $\mu \neq 0$, from (\ref{3.1}) it follows
$$
u(\mu x)=\int_{R^n}\frac{u^p(y)dy}{|\mu x-y|^{n-\alpha}}
=\int_{R^n}\frac{\mu^nu^p(\mu z)dz}{|\mu (x-z)|^{n-\alpha}}
=\mu^{\alpha}\int_{R^n}\frac{u^p(\mu z)dz}{|x-z|^{n-\alpha}}.
$$
Differentiate both sides with respect to $\mu$. Then,
$$
x \cdot \nabla u(\mu x)=\alpha\mu^{\alpha-1}\int_{R^n}\frac{u^p(\mu z)dz}{|x-z|^{n-\alpha}}
+\mu^{\alpha}\int_{R^n}\frac{pu^{p-1}(\mu z)(z\cdot \nabla u)dz}{|x-z|^{n-\alpha}}.
$$
Letting $\mu=1$ yields
\begin{equation} \label{phzv}
x \cdot \nabla u(x)=\alpha u(x)
+\int_{R^n}\frac{z\cdot \nabla u^p(z)dz}{|x-z|^{n-\alpha}}.
\end{equation}

To handle the last term of the right hand side of (\ref{phzv}), we
integrate by parts to get
\begin{equation} \label{ibp}
\int_{B_R}\frac{z\cdot \nabla u^p(z)dz}{|x-z|^{n-\alpha}}
=R\int_{\partial B_R}\frac{u^p(z)ds}{|x-z|^{n-\alpha}}
-I_R(x)
\end{equation}
for any $R>0$. Here $B_R=B_R(0)$ and
$$
I_R(x)=n\int_{B_R}\frac{u^p(z)dz}{|x-z|^{n-\alpha}}+(n-\alpha)\int_{B_R}
\frac{(z\cdot(x-z))u^p(z)}{|x-z|^{{n-\alpha}+2}}dz.
$$

Next, we claim the first term of the right hand side of (\ref{ibp}) converges
to zero as $R \to \infty$. In fact, for suitably large $R$,
\begin{equation} \label{L6}
\begin{array}{ll}
&\quad R\displaystyle\int_{\partial B_R}\frac{u^p(z)ds}{|x-z|^{n-\alpha}}\\[3mm]
&\leq CR^{1+\alpha-n} (R\displaystyle\int_{\partial B_R}u^{p+1} ds)^{\frac{p}{p+1}}
R^{-\frac{p}{p+1}}R^{\frac{n-1}{p+1}}\\[3mm]
&=CR^{\alpha-n+\frac{n}{p+1}}(R\displaystyle\int_{\partial B_R}u^{p+1} ds)^{\frac{p}{p+1}}.
\end{array}
\end{equation}
By Theorem \ref{th2.3}, we see that $p \geq \frac{n}{n-\alpha}$.
So $\alpha-n+\frac{n}{p+1}<0$.
In addition, using Proposition 2.1 and $u \in L^{p+1}(R^n)$,
we can find $R_j \to \infty$ such that
\begin{equation} \label{L7}
R_j\int_{\partial B_{R_j}} u^{p+1}ds \to 0.
\end{equation}
Let $R=R_j \to \infty$ in (\ref{L6}), we verify our claim.

Multiplying (\ref{phzv}) by $u^p(x)$ and applying the claim above, we obtain
$$\begin{array}{ll}
&\quad\displaystyle\int_{R^n}u^p(x)(x\cdot \nabla u(x))dx\\[3mm]
&=\alpha\displaystyle\int_{R^n}u^{p+1}(x)dx
+\int_{R^n}u^p(x)dx\int_{R^n}\frac{z\cdot \nabla u^p(z)dz}{|x-z|^{n-\alpha}}\\[3mm]
&=\alpha\displaystyle\int_{R^n}u^{p+1}(x)dx
-n\int_{R^n}u^p(x)dx\int_{R^n}\frac{u^p(z)dz}{|x-z|^{n-\alpha}}\\[3mm]
&\quad-(n-\alpha)\displaystyle\int_{R^n}
\int_{R^n}\frac{(z\cdot(x-z))u^p(x)u^p(z)}{|x-z|^{{n-\alpha}+2}}dzdx.
\end{array}
$$
By virtue of $z\cdot(x-z)+x\cdot(z-x)=-|x-z|^2$, it follows that
$$\begin{array}{ll}
&\quad\displaystyle
\int_{R^n}u^p(x)(x\cdot \nabla u(x))dx\\[3mm]
&=\alpha\displaystyle\int_{R^n}u^{p+1}(x)dx
-n\int_{R^n}u^{p+1}(x)dx\\[3mm]
&\quad +\displaystyle\frac{n-\alpha}{2}\int_{R^n}
\int_{R^n}\frac{u^p(x)u^p(z)}{|x-z|^{{n-\alpha}}}dzdx\\[3mm]
&=-\displaystyle\frac{n-\alpha}{2}\int_{R^n}u^{p+1}(x)dx.
\end{array}
$$
On the other hand, integrating by parts an using (\ref{L7}),
we get
$$
\int_{R^n}u^p(x)(x\cdot \nabla u(x))dx
=\frac{1}{p+1}\int_{R^n}(x\cdot\nabla u^{p+1}(x))dx
=\frac{-n}{p+1}\int_{R^n}u^{p+1}(x)dx.
$$
Combining this with the result above, we deduce that
$$
\frac{1}{p+1}=\frac{n-\alpha}{2n}.
$$
This is (\ref{3.2}). Theorem \ref{th4.5} is proved.
\end{proof}

\begin{corollary} \label{coro4.6}
Let $k \in [1,n/2)$ be an integer and $p>1$.
The $2k$-order Lane-Emden PDE
\begin{equation} \label{2kPDE}
(-\Delta)^k u(x)=u^p(x), \quad u>0 ~in ~R^n,
\end{equation}
has positive classical solution in $L^{p+1}(R^n)$ if and only if
$p=\frac{n+2k}{n-2k}$.
\end{corollary}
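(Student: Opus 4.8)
The plan is to deduce Corollary \ref{coro4.6} from Theorem \ref{th4.5} with $\alpha=2k$, by establishing the equivalence of the $2k$-order PDE (\ref{2kPDE}) with the integral equation
\[
u(x)=\int_{R^n}\frac{u^p(y)\,dy}{|x-y|^{n-2k}},
\]
after rescaling $u$ by a constant to absorb the normalizing factor in the fundamental solution of $(-\Delta)^k$ (legitimate since $p>1$, so $\lambda^{p-1}=$ const is solvable and $L^{p+1}$ membership is unaffected). Granted this equivalence, both implications follow at once. For the \emph{only if} part: a positive classical solution $u\in L^{p+1}(R^n)$ of (\ref{2kPDE}) gives a positive classical solution of the integral equation that still lies in $L^{p+1}(R^n)$, so Theorem \ref{th4.5} forces $p=\frac{n+2k}{n-2k}$. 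For the \emph{if} part: when $p=\frac{n+2k}{n-2k}$, the explicit function $u(x)=c\bigl(\frac{t}{t^2+|x|^2}\bigr)^{(n-2k)/2}$ solves the integral equation (cf. \cite{ChLO}, \cite{Lieb}), hence solves (\ref{2kPDE}) classically after bootstrapping regularity, and it lies in $L^{p+1}(R^n)$ because $p+1=\frac{2n}{n-2k}$ makes $u^{p+1}\sim|x|^{-2n}$ integrable at infinity while $u$ is bounded near the origin.

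For the equivalence I would argue exactly as in the proof of Corollary \ref{coro2.4}, which in turn follows \cite{ChLO}. Since $p>1$, Liu--Guo--Zhang \cite{LGZ} ensure that any positive classical solution $u$ of (\ref{2kPDE}) satisfies $(-\Delta)^iu>0$ for $i=1,\dots,k-1$, so each $w_i:=(-\Delta)^iu$ is a positive superharmonic function and $-\Delta w_{k-1}=u^p\ge 0$. Then one peels off one Laplacian at a time: on a ball $B_R(x_0)$ one uses the Green representation for $-\Delta$ exactly as in Step 2 of the proof of Theorem \ref{th2.2}, and Proposition \ref{prop2.1} to kill the boundary integrals along a sequence $R_j\to\infty$; iterating $k$ times yields
\[
u(x_0)=c_{n,k}\int_{R^n}\frac{u^p(y)\,dy}{|x_0-y|^{n-2k}},\qquad x_0\in R^n,
\]
with the right-hand side finite. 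Conversely, differentiating under the integral sign shows any solution of the integral equation solves (\ref{2kPDE}).

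The main obstacle is precisely this equivalence step: one must guarantee that the successive Newton/Riesz potentials are finite and that the boundary terms from Green's identity on $B_R(x_0)$ vanish along a suitable $R_j\to\infty$. Here the positivity $(-\Delta)^iu>0$ from \cite{LGZ} is essential, since it permits the monotone/barrier argument on balls that bounds each $w_i$ below by a potential; and at the top level the decay encoded in $u\in L^{p+1}(R^n)$, combined with Proposition \ref{prop2.1}, supplies the sequence along which $R\int_{\partial B_R}u^{p+1}\,ds\to 0$, just as in the proof of Theorem \ref{th4.5}. All of this machinery is already in place in Sections 2 and 4.2 and in the cited works, so beyond bookkeeping of the iteration there is no new analytic difficulty; Corollary \ref{coro4.6} is essentially the case $\alpha=2k$ of Theorem \ref{th4.5} combined with Corollary \ref{coro2.4}-type equivalence.
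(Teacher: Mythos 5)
Your proposal is correct and follows essentially the same route as the paper: the paper's proof of Corollary~\ref{coro4.6} is two sentences long, invoking the equivalence of (\ref{2kPDE}) with the integral equation (established in Corollary~\ref{coro2.4} via the super-polyharmonic property from \cite{LGZ} and the Green-representation argument of \cite{ChLO}) and then applying Theorem~\ref{th4.5} with $\alpha=2k$. You have merely unpacked the equivalence step in more detail, which is exactly what the paper delegates to its citations.
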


\begin{proof}
When $p>1$, Corollary \ref{coro2.4} shows that (\ref{2kPDE}) is equivalent to
the HLS type equation (\ref{3.1}) with
$\alpha=2k$. According to Theorem \ref{th4.5}, we have the corresponding critical
conditions $p=\frac{n+2k}{n-2k}$ for the existence of the finite
energy solutions of the (\ref{hoPDE}).
\end{proof}

\paragraph{Remark 4.1.}
Theorem \ref{th4.5} shows another critical condition (\ref{3.2}) for the existence
of the positive solutions to (\ref{3.1}). Since the finite energy solutions class of
(\ref{3.1}) is smaller than the positive solutions class of (\ref{HLS}),
the critical condition (\ref{3.2}) is stronger than (\ref{cc1}).

\subsection{$\gamma$-Laplace equation}

Serrin and Zou \cite{SZ} proved that
$\gamma$-Laplace equation has positive classical solutions if and only if $p
\geq \gamma^*-1$, where $\gamma^*=\frac{n\gamma}{n-\gamma}$.
Naturally, we conjecture that $\gamma$-Laplace
equation has the finite energy solution if and only if
$p=\gamma^*-1$.

To define the finite energy solution, we first introduce the following
theorem. It is a natural generalization of Theorem \ref{th4.4}.

\begin{theorem} \label{th4.7}
Assume $u>0$ is a classical solution of the
$\gamma$-Laplace equation (\ref{pL}). Assume $u \in L^{\gamma^*}(R^n)$
with $\gamma^*=\frac{n\gamma}{n-\gamma}$.
Then $\nabla u \in L^\gamma(R^n)$ if and only if
$u \in L^{p+1}(R^n)$. In addition,
$\|\nabla u\|_\gamma^\gamma=\|u\|_{p+1}^{p+1}$.
\end{theorem}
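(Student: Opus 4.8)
The plan is to mirror the proof of Theorem \ref{th4.4}, replacing the linear test-function argument by the natural quasilinear one. Let $\zeta_R$ be the cut-off function from (\ref{cut}) and set $D=B_{3R}(0)$. To prove $u\in L^{p+1}(R^n)\cap L^{\gamma^*}(R^n)\Rightarrow\nabla u\in L^\gamma(R^n)$, I would multiply (\ref{pL}) by $u\zeta_R^\gamma$ and integrate by parts; since $\zeta_R$ vanishes near $\partial D$ this produces
$$\int_D|\nabla u|^\gamma\zeta_R^\gamma\,dx+\gamma\int_D|\nabla u|^{\gamma-2}(\nabla u\cdot\nabla\zeta_R)\,u\,\zeta_R^{\gamma-1}\,dx=\int_D u^{p+1}\zeta_R^\gamma\,dx.$$
The cross term is controlled, via Young's inequality with exponents $\frac{\gamma}{\gamma-1}$ and $\gamma$, by $\delta\int_D|\nabla u|^\gamma\zeta_R^\gamma\,dx+C(\delta)\int_D u^\gamma|\nabla\zeta_R|^\gamma\,dx$ for any $\delta\in(0,1)$. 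Because $|\nabla\zeta_R|\le C/R$ is supported in $B_{2R}\setminus B_R$ and $n(1-\gamma/\gamma^*)=\gamma$ (this is exactly the Sobolev scaling of $W^{1,\gamma}$), Hölder's inequality with $u\in L^{\gamma^*}(R^n)$ gives
$$\int_D u^\gamma|\nabla\zeta_R|^\gamma\,dx\le \frac{C}{R^\gamma}\Big(\int_{B_{2R}\setminus B_R}u^{\gamma^*}\,dx\Big)^{\gamma/\gamma^*}R^{n(1-\gamma/\gamma^*)}\le C\|u\|_{\gamma^*}^\gamma,$$
uniformly in $R$ (and in fact $\to0$). Absorbing $\delta\int_D|\nabla u|^\gamma\zeta_R^\gamma$ on the left and letting $R\to\infty$ yields $\nabla u\in L^\gamma(R^n)$.

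The remaining task is the identity $\|\nabla u\|_\gamma^\gamma=\|u\|_{p+1}^{p+1}$, which also delivers the converse implication. Once $\nabla u\in L^\gamma(R^n)$ and $u\in L^{\gamma^*}(R^n)$, both $|\nabla u|^\gamma$ and $u^{\gamma^*}$ lie in $L^1(R^n)$, so Proposition \ref{prop2.1} applied to their sum gives a sequence $R_j\to\infty$ with $R_j\int_{\partial B_{3R_j}}(|\nabla u|^\gamma+u^{\gamma^*})\,ds\to0$. I would then bound the boundary term by Hölder's inequality with the three exponents $\gamma^*$, $\frac{\gamma}{\gamma-1}$, and $n$ (the last accounting for the surface measure, legitimate since $\frac1{\gamma^*}+\frac{\gamma-1}{\gamma}+\frac1n=1$ because $\frac1{\gamma^*}=\frac1\gamma-\frac1n$):
$$\Big|\int_{\partial B_{3R}}u\,|\nabla u|^{\gamma-2}\partial_\nu u\,ds\Big|\le C\Big(\int_{\partial B_{3R}}u^{\gamma^*}ds\Big)^{1/\gamma^*}\Big(\int_{\partial B_{3R}}|\nabla u|^\gamma ds\Big)^{(\gamma-1)/\gamma}R^{(n-1)/n}.$$
Since $-\frac1{\gamma^*}-\frac{\gamma-1}{\gamma}+\frac{n-1}{n}=0$, the right side equals a constant times $\big(R\int_{\partial B_{3R}}u^{\gamma^*}\big)^{1/\gamma^*}\big(R\int_{\partial B_{3R}}|\nabla u|^\gamma\big)^{(\gamma-1)/\gamma}$, which tends to $0$ along $R_j$.

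Finally I would multiply (\ref{pL}) by $u$ and integrate by parts on $B_{3R}$, obtaining
$$\int_{B_{3R}}u^{p+1}\,dx=\int_{B_{3R}}|\nabla u|^\gamma\,dx-\int_{\partial B_{3R}}u\,|\nabla u|^{\gamma-2}\partial_\nu u\,ds.$$
Letting $R=R_j\to\infty$, the boundary term vanishes and $\int_{B_{3R_j}}|\nabla u|^\gamma\to\|\nabla u\|_\gamma^\gamma<\infty$, so $u\in L^{p+1}(R^n)$ and $\|u\|_{p+1}^{p+1}=\|\nabla u\|_\gamma^\gamma$; running the same last two steps starting from the hypothesis $\nabla u\in L^\gamma(R^n)$ gives the converse. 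I expect the main obstacle to be justifying the two integrations by parts and the boundary estimates for the quasilinear operator — one needs the classical solution to be $C^1$ up to the relevant spheres and $|\nabla u|^{\gamma-2}\nabla u$ to be integrable there, which is guaranteed by the standard $C^{1,\alpha}$ regularity theory for the $\gamma$-Laplacian (and for $\gamma<2$ one may first work away from, then pass to the limit at, the critical set of $u$). The rest is the exponent bookkeeping, which closes precisely because $\gamma^*=\frac{n\gamma}{n-\gamma}$ is the critical Sobolev exponent for $W^{1,\gamma}$.
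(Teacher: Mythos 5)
Your argument is essentially the paper's own proof of Theorem \ref{th4.7}: the same multiplication by $u\zeta_R^\gamma$ with Young's inequality and the Hölder bound $\int u^\gamma|\nabla\zeta_R|^\gamma\lesssim\|u\|_{\gamma^*}^\gamma$, the same appeal to Proposition \ref{prop2.1} to pick good radii $R_j$, the same boundary-term Hölder estimate (the paper writes the exponent of $R$ as $(n-1)(1/\gamma-1/\gamma^*)$, which equals your $(n-1)/n$), and the same final identity from testing with $u$. The exponent bookkeeping checks out, so this is a correct proof following the same route.
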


A classical positive solution $u \in L^{\gamma^*}(R^n)$ of (\ref{pL})
is called {\it finite energy solution}
if $u \in L^{p+1}(R^n)$ or $\nabla u \in L^\gamma(R^n)$.

\begin{proof}
Let $u \in L^{\gamma^*}(R^n)$. Take a cut-off function $\zeta_R$
as (\ref{cut}). Using the H\"older inequality, we get
\begin{equation} \label{asd}
\int_Du^\gamma |\nabla\zeta_R|^\gamma dx \leq \|u\|_{\gamma^*,D}^\gamma
\|\nabla \zeta\|_{\frac{\gamma\gamma^*}{\gamma^*-\gamma},D}^\gamma
\leq C,
\end{equation}
where $D=B_{2R}(0)$, and $C>0$ is independent of $R$.

(1) Sufficiency. Supposing $u \in L^{p+1}(R^n) \cap L^{\gamma^*}(R^n)$ solves (\ref{pL}),
we claim $\nabla u \in L^\gamma(R^n)$ and $\|\nabla u\|_\gamma^\gamma=\|u\|_{p+1}^{p+1}$.

Multiplying (\ref{pL}) by $u\zeta_R^\gamma$ and integrating by parts on $D$,
we obtain
\begin{equation} \label{cheng}
\int_D|\nabla u|^\gamma\zeta_R^\gamma dx+\gamma\int_D|\nabla u|^{\gamma-2}
(u\zeta_R^{\gamma-1})\nabla u \nabla \zeta_R dx
=\int_Du^{p+1} \zeta_R^\gamma dx.
\end{equation}
Using the Young inequality,
from (\ref{cheng}) we deduce that for any $\delta \in (0,1/2)$,
$$
\begin{array}{ll}
\displaystyle\int_D|\nabla u|^\gamma \zeta_R^\gamma dx
&\leq C|\displaystyle\int_D|\nabla u|^{\gamma-2}
(u\zeta_R^{\gamma-1})\nabla u \nabla \zeta_R dx|+\int_Du^{p+1} \zeta_R^\gamma dx\\[3mm]
&\leq \delta \displaystyle\int_D|\nabla u|^\gamma \zeta_R^\gamma dx
+C\int_D u^\gamma |\nabla\zeta_R|^\gamma dx
+\int_Du^{p+1} \zeta_R^\gamma dx.
\end{array}
$$
Combining this result with (\ref{asd}), we see that
$\int_D|\nabla u|^\gamma \zeta_R^\gamma dx \leq C$. Let $R \to \infty$, then
$$
\nabla u \in L^\gamma(R^n).
$$
From this result as well as $u \in L^{\gamma^*}(R^n)$, we use Proposition
\ref{prop2.1} to deduce that for some $R_j$ (denoted by $R$),
\begin{equation} \label{cvb}
R\int_{\partial D}(|\nabla u|^\gamma +u^{\gamma^*})ds <o(1),
\quad as ~R \to \infty.
\end{equation}
Multiplying (\ref{pL}) by $u$ and integrating on $D$, we have
\begin{equation} \label{vbn}
\int_D u^{p+1}dx=-\int_D u \Delta_\gamma udx
=\int_D|\nabla u|^\gamma dx -\int_{\partial D} u|\nabla u|^{\gamma-2}
\partial_{\nu} uds.
\end{equation}
By means of the H\"older inequality and (\ref{cvb}), we get
$$\begin{array}{ll}
&\quad |\displaystyle\int_{\partial D} u|\nabla u|^{\gamma-2}
\partial_{\nu} uds|\\[3mm]
&\leq [R\displaystyle\int_{\partial D}|\nabla u|^\gamma ds]^{\frac{\gamma-1}{\gamma}}
R^{\frac{1-\gamma}{\gamma}}[R\int_{\partial D} u^{\gamma^*}ds]^{1/\gamma^*}R^{-1/\gamma^*}
R^{(n-1)(1/\gamma-1/\gamma^*)}\\[3mm]
&< o(1).
\end{array}
$$
Letting $R \to \infty$ in (\ref{vbn}),
we have
\begin{equation} \label{guo}
\int_{R^n}|\nabla u|^\gamma dx=\int_{R^n}u^{p+1}dx.
\end{equation}

(2) Necessity.
Let $u >0$ solve (\ref{pL}).
If $\nabla u \in L^\gamma(R^n)$ and $u \in L^{\gamma^*}(R^n)$, then
(\ref{cvb}) is still true. Using (\ref{cvb}) to handle the last term of
the right hand side of (\ref{vbn}), we also derive (\ref{guo}) and hence
$u \in L^{p+1}(R^n)$.

\end{proof}

\begin{theorem} \label{th4.8}
The $\gamma$-Laplace equation (\ref{pL}) has a classical solution
satisfying $\nabla u \in L^{\gamma}(R^n)$ if and only if
\begin{equation} \label{pLc}
p=\gamma^*-1.
\end{equation}
\end{theorem}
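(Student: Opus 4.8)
The plan is to prove the two implications separately: the "if" part by exhibiting an explicit finite energy solution, and the "only if" part by a Pohozaev type identity in differential form, combined with the energy identity of Theorem~\ref{th4.7}.

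For the sufficiency, assume $p=\gamma^*-1$. Since $\gamma^*-1=\frac{n(\gamma-1)+\gamma}{n-\gamma}$, this is precisely the exponent for which the computation (\ref{2.3}) in the proof of Theorem~\ref{th2.6} produces a fast decaying radial solution of $-\Delta_\gamma u=c(r)u^p$ with $c(r)$ \emph{constant}: with $u(x)=(1+|x|^m)^{-\theta}$, $m=\frac{\gamma}{\gamma-1}$ and $m\theta=\frac{n-\gamma}{\gamma-1}$, one has $(\theta+1)\gamma=n$, the bracket in (\ref{2.3}) reduces to $\frac{n}{1+r^m}$, and
$$
-\Delta_\gamma u=(m\theta)^{\gamma-1}n\,(1+|x|^m)^{-\theta p}=(m\theta)^{\gamma-1}n\,u^p .
$$
Replacing $u$ by $\lambda u$ with $\lambda=\big[(m\theta)^{\gamma-1}n\big]^{1/(p-\gamma+1)}$ turns this into $-\Delta_\gamma u=u^p$, so (\ref{pL}) admits a classical solution. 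Because $u(x)\sim|x|^{-(n-\gamma)/(\gamma-1)}$ and hence $|\nabla u(x)|^\gamma\sim|x|^{-\gamma(n-1)/(\gamma-1)}$ as $|x|\to\infty$, and $\frac{\gamma(n-1)}{\gamma-1}>n$ is equivalent to $n>\gamma$, we get $\nabla u\in L^\gamma(R^n)$ (and also $u\in L^{\gamma^*}(R^n)\cap L^{p+1}(R^n)$). This settles the "if" direction.

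For the necessity, let $u>0$ be a classical solution of (\ref{pL}) with $\nabla u\in L^\gamma(R^n)$. I would first argue that $u$ vanishes at infinity: if $\inf_{R^n}u=m_0>0$ then $-\Delta_\gamma u=u^p\ge m_0^p>0$ on $R^n$, and integrating this over $B_R$ and estimating $\partial_\nu u$ on $\partial B_R$ forces $\int_{\partial B_R}|\nabla u|^\gamma ds$ to grow too fast in $R$, contradicting $\nabla u\in L^\gamma(R^n)$; hence $\inf_{R^n}u=0$ and, by the Sobolev inequality, $u\in L^{\gamma^*}(R^n)$. Theorem~\ref{th4.7} then applies and yields $u\in L^{p+1}(R^n)$ together with $\|\nabla u\|_\gamma^\gamma=\|u\|_{p+1}^{p+1}$. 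Next I would multiply $-\Delta_\gamma u=u^p$ by $x\cdot\nabla u$ and integrate over $B_R$; using the pointwise identities $\nabla(x\cdot\nabla u)\cdot|\nabla u|^{\gamma-2}\nabla u=|\nabla u|^\gamma+\tfrac1\gamma x\cdot\nabla(|\nabla u|^\gamma)$ and $(x\cdot\nabla u)u^p=\tfrac1{p+1}x\cdot\nabla(u^{p+1})$, and integrating by parts, one obtains
$$
\begin{array}{l}
\big(1-\tfrac n\gamma\big)\displaystyle\int_{B_R}|\nabla u|^\gamma dx+\frac R\gamma\displaystyle\int_{\partial B_R}|\nabla u|^\gamma ds-R\displaystyle\int_{\partial B_R}|\nabla u|^{\gamma-2}(\partial_\nu u)^2 ds\\[3mm]
\qquad=\displaystyle\frac R{p+1}\int_{\partial B_R}u^{p+1}ds-\frac n{p+1}\int_{B_R}u^{p+1}dx .
\end{array}
$$
Since $|\nabla u|^\gamma,u^{p+1}\in L^1(R^n)$, Proposition~\ref{prop2.1} gives $R_j\to\infty$ along which $R_j\int_{\partial B_{R_j}}(|\nabla u|^\gamma+u^{p+1})ds\to0$; noting $|\nabla u|^{\gamma-2}(\partial_\nu u)^2\le|\nabla u|^\gamma$, all three boundary terms vanish in the limit, so $\frac{n-\gamma}{\gamma}\|\nabla u\|_\gamma^\gamma=\frac n{p+1}\|u\|_{p+1}^{p+1}$. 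Combined with $\|\nabla u\|_\gamma^\gamma=\|u\|_{p+1}^{p+1}>0$ this forces $\frac{n-\gamma}{\gamma}=\frac n{p+1}$, i.e. $p+1=\frac{n\gamma}{n-\gamma}=\gamma^*$, which is (\ref{pLc}).

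The main obstacle will be making the Pohozaev computation rigorous: solutions of (\ref{pL}) are in general only $C^{1,\alpha}_{loc}$, so $x\cdot\nabla u$ is not smooth enough for a naive integration by parts, and one should instead work with the cut-off functions $\zeta_R$ of (\ref{cut}) as in (\ref{cheng}) together with a regularization of $|\nabla u|^{\gamma-2}$, and then pass to the limit along the sequence $R_j$. The factor $|\nabla u|^{\gamma-2}$ (when $\gamma\ne2$) also makes the boundary estimates more delicate than in the semilinear case of Theorem~\ref{th4.5}, although the inequality $|\nabla u|^{\gamma-2}(\partial_\nu u)^2\le|\nabla u|^\gamma$ keeps them controlled by $R\int_{\partial B_R}|\nabla u|^\gamma ds$. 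A secondary technical point is the verification that $\inf_{R^n}u=0$, which is what allows the Sobolev embedding to supply $u\in L^{\gamma^*}(R^n)$ and thus to invoke Theorem~\ref{th4.7}.
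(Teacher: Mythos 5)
Your argument is essentially the paper's own: the sufficiency is the same explicit radial profile (the paper simply cites Gazzola for the formula, while you reconstruct it from the computation in (\ref{2.3}) and verify the decay directly), and the necessity is the identical Pohozaev identity (\ref{deng}) together with the energy identity $\|\nabla u\|_\gamma^\gamma=\|u\|_{p+1}^{p+1}$ from Theorem~\ref{th4.7}.

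One thing you do that is genuinely worth noting: the paper's proof invokes Theorem~\ref{th4.7} in the form ``$\nabla u\in L^\gamma$ implies $u\in L^{\gamma^*}\cap L^{p+1}$,'' but Theorem~\ref{th4.7} actually \emph{assumes} $u\in L^{\gamma^*}$; as stated, Theorem~\ref{th4.8} only assumes $\nabla u\in L^\gamma$. You correctly flag this and supply the missing step: show $\inf_{R^n}u=0$ (else $u^p\ge m_0^p>0$, and integrating $-\Delta_\gamma u=u^p$ over $B_R$ plus H\"older forces $\int_{\partial B_R}|\nabla u|^\gamma ds$ to grow like a positive power of $R$, contradicting $\nabla u\in L^\gamma$), then invoke the Sobolev inequality to get $u\in L^{\gamma^*}$. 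This is a sound patch. Your secondary caveat about the $C^{1,\alpha}_{loc}$ regularity of $\gamma$-Laplace solutions and the consequent need for a regularized Pohozaev argument is a legitimate technical point that the paper also glosses over, but the inequality $|\nabla u|^{\gamma-2}(\partial_\nu u)^2\le|\nabla u|^\gamma$ you use to control the boundary terms is exactly what makes the limit argument work.
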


\begin{proof}
If $p=\gamma^*-1$, according to p.328 in \cite{Gazzola}, (\ref{pL}) admits
a class of solutions
$$
u(x)=\frac{d}{[1+D(d^{\frac{\gamma}{n-\gamma}}
|x|^{\frac{\gamma}{\gamma-1}})]^{\frac{n-\gamma}{\gamma}}}.
$$
Here $d,D$ are positive constants.

Next, we prove the sufficiency.
Write $B=B_R(0)$. Multiplying the equation with $(x\cdot \nabla u)$ and integrating on $B$,
we obtain
$$\begin{array}{ll}
&\displaystyle
\int_B|\nabla u|^{\gamma-2}\nabla u\nabla(x\cdot\nabla u)dx
-\int_{\partial B}|\nabla u|^{\gamma-2}(\nu\cdot\nabla u)(x\cdot\nabla u)ds\\[3mm]
&=\displaystyle\int_B u^p(x\cdot\nabla u)dx.
\end{array}
$$
Here $\nu$ is the unit outward normal vector to $\partial B$.
Noting
$$
\nabla u\nabla(x\cdot\nabla u)=|\nabla u|^2+\frac{1}{2}x\cdot\nabla(|\nabla u|^2)
$$
and $x=|x|\nu$, we have
$$\begin{array}{ll}
&\displaystyle\int_B|\nabla u|^\gamma dx+\frac{1}{\gamma}\int_B x\cdot\nabla(|\nabla u|^\gamma)dx
-R\int_{\partial B}|\nabla u|^{\gamma-2}|\partial_\nu u|^2ds\\[3mm]
&=\displaystyle\frac{1}{p+1}\int_B x\cdot\nabla u^{p+1}dx.
\end{array}
$$
Integrating by parts, we get
\begin{equation} \label{deng}
\begin{array}{ll}
&\displaystyle(1-\frac{n}{\gamma})\int_B|\nabla u|^\gamma dx
+\frac{R}{\gamma}\int_{\partial B}|\nabla u|^\gamma ds
-R\int_{\partial B}|\nabla u|^{\gamma-2}|\partial_\nu u|^2 ds\\[3mm]
&=\displaystyle\frac{R}{p+1}\int_{\partial B}u^{p+1}ds-\frac{n}{p+1}\int_Bu^{p+1}dx.
\end{array}
\end{equation}
According to Theorem \ref{th4.7}, $\nabla u \in L^\gamma(R^n)$
implies $u \in L^{\gamma^*}(R^n) \cap L^{p+1}(R^n)$.
Therefore, by Proposition \ref{prop2.1}, we can find $R_j \to \infty$, such that
$$
R_j\int_{\partial B_{R_j}}(u^{p+1}+|\nabla u|^\gamma)ds \to 0.
$$
Let $R=R_j \to \infty$ in (\ref{deng}). By means of the result above, we deduce that
$$
(1-\frac{n}{\gamma})\int_{R^n}|\nabla u|^\gamma dx
=-\frac{n}{p+1}\int_{R^n}u^{p+1}dx.
$$
Inserting (\ref{guo}) into this result yields $p=\gamma^*-1$.
\end{proof}

\paragraph{Remark 4.2.}
\begin{enumerate}
\item For the Wolff type equation (\ref{3.3}),
we do not know whether (\ref{3.5})
is the necessary and sufficient condition for the existence of positive
solution in $L^{p+\gamma-1}(R^n)$.

\item A surprising observation is, when $\gamma \neq 2$,
the critical condition (\ref{pLc})
is different from (\ref{3.5}) with $\beta=1$. One reason
is that the solution of (\ref{pL}) only solves a Wolff type equation with
variable coefficient instead of (\ref{3.3}). Another reason is that the
finite energy functions spaces $L^{p+1}(R^n)$ and
$L^{p+\gamma-1}(R^n)$ are also different except for $\gamma=2$.
This distinction shows that
(\ref{3.2}) and (\ref{pLc}) are not the same class critical exponents.
For $\gamma$-Laplace equation, besides the divided number in Theorem
\ref{th2.6}, we also have two critical exponents mentioned above.
The relation of them is
$$
\frac{n(\gamma-1)}{n-\gamma}<\frac{n+\gamma}{n-\gamma}(\gamma-1)
<\gamma^*-1
$$
as long as $\gamma \in (1,2)$.
This is also led to by the difference of the
existence spaces of positive solutions.
\end{enumerate}

\section{Finite energy solutions: system}

\subsection{Critical conditions and scaling invariants}

\begin{theorem} \label{th5.1}
(1) Both the semilinear Lane-Emden type system
\begin{equation}
 \left \{
   \begin{array}{l}
      -\Delta u=v^q\\
      -\Delta v=u^p.
   \end{array}
   \right.  \label{L-E}        
 \end{equation}
and the energy integrals $\|u\|_{L^{p+1}(R^n)}$ and $\|v\|_{L^{q+1}(R^n)}$
are invariant under the scaling transforms, if and only if
\begin{equation} \label{L-Ec}
\frac{1}{p+1}+\frac{1}{q+1}=\frac{n-2}{n}.
\end{equation}

(2) Both the $\gamma$-Laplace system
\begin{equation}
 \left \{
   \begin{array}{l}
      -\Delta_\gamma u=v^q,\\
      -\Delta_\gamma v=u^p.
   \end{array}
   \right.  \label{pL-s}        
 \end{equation}
and the energy integrals $\|u\|_{L^{p+\gamma-1}(R^n)}$ and
$\|v\|_{L^{q+\gamma-1}(R^n)}$
are invariant under the scaling transforms, if and only if
\begin{equation} \label{pL-c}
\frac{1}{p+\gamma-1}+\frac{1}{q+\gamma-1}=\frac{n-\gamma}{n(\gamma-1)}.
\end{equation}
In addition, (\ref{pL-s}) and the energy integrals
$\|u\|_{L^{p+1}(R^n)}$ and $\|v\|_{L^{q+1}(R^n)}$ are invariant
under the scaling transforms, if and only if
\begin{equation} \label{pL-cc}
p=q \quad or \quad \gamma=2.
\end{equation}

(3) The HLS type system
\begin{equation}
 \left \{
   \begin{array}{l}
      u(x)=\displaystyle\int_{R^n}\frac{v^q(y)dy}{|x-y|^{n-\alpha}},\\
      v(x)=\displaystyle\int_{R^n}\frac{u^p(y)dy}{|x-y|^{n-\alpha}}
   \end{array}
   \right. \label{hls}         
 \end{equation}
and the energy integrals $\|u\|_{L^{p+1}(R^n)}$ and $\|v\|_{L^{q+1}(R^n)}$
are invariant under the scaling transforms, if and only if
\begin{equation} \label{hlsc}
\frac{1}{p+1}+\frac{1}{q+1}=\frac{n-\alpha}{n}.
\end{equation}

(4) The Wolff type system
\begin{equation}
 \left \{
   \begin{array}{l}
      u(x)=W_{\beta,\gamma}(v^q)(x),\\
      v(x)=W_{\beta,\gamma}(u^p)(x).
   \end{array}
   \right. \label{wfs}         
 \end{equation}
and the energy integrals $\|u\|_{L^{p+\gamma-1}(R^n)}$ and $\|v\|_{L^{q+\gamma-1}(R^n)}$
are invariant under the scaling transforms, if and only if
\begin{equation} \label{wfsc}
\frac{1}{p+\gamma-1}+\frac{1}{q+\gamma-1}=\frac{n-\beta\gamma}{n(\gamma-1)}.
\end{equation}
In addition, (\ref{wfs}) and the energy integrals
$\|u\|_{L^{p+1}(R^n)}$ and $\|v\|_{L^{q+1}(R^n)}$ are invariant
under the scaling transforms, if and only if
\begin{equation} \label{wfscc}
p=q \quad or \quad \gamma=2.
\end{equation}
\end{theorem}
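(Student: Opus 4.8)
The plan is to run, for all four systems, the same dilation argument already used for the scalar equations in Theorems \ref{th4.1}--\ref{th4.3}, but now carrying a separate scaling exponent for each component: set $u_\mu(x)=\mu^{\sigma_1}u(\mu x)$ and $v_\mu(x)=\mu^{\sigma_2}v(\mu x)$. First I would substitute this pair into each system and record the power of $\mu$ that each side acquires. For \eqref{L-E} one gets $-\Delta u_\mu=\mu^{\sigma_1+2-q\sigma_2}v_\mu^q$ and $-\Delta v_\mu=\mu^{\sigma_2+2-p\sigma_1}u_\mu^p$, so invariance of the system is equivalent to the linear pair $\sigma_1+2=q\sigma_2$, $\sigma_2+2=p\sigma_1$. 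The identical computations (as in the proofs of Theorems \ref{th4.1}, \ref{th4.2}, \ref{th4.3}) give: for \eqref{hls} the pair $\sigma_1+\alpha=q\sigma_2$, $\sigma_2+\alpha=p\sigma_1$; for \eqref{pL-s} the pair $\sigma_1(\gamma-1)+\gamma=q\sigma_2$, $\sigma_2(\gamma-1)+\gamma=p\sigma_1$ (here the only point to check is that $-\Delta_\gamma u_\mu$ carries the factor $\mu^{\sigma_1(\gamma-1)+\gamma}$); and for \eqref{wfs} the pair $\sigma_1(\gamma-1)+\beta\gamma=q\sigma_2$, $\sigma_2(\gamma-1)+\beta\gamma=p\sigma_1$, where the one nontrivial step is the change of variables $y=\mu z$, $t=\mu s$ inside the Wolff integral, producing the factor $\mu^{\beta\gamma/(\gamma-1)}$ exactly as in Theorem \ref{th4.2}.

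Next I would impose invariance of the energies. Since $\|u_\mu\|_{L^r}^{r}=\mu^{\sigma_1 r-n}\|u\|_{L^r}^{r}$, invariance of $\|u\|_{L^{p+1}}$ forces $\sigma_1=\tfrac{n}{p+1}$ and invariance of $\|v\|_{L^{q+1}}$ forces $\sigma_2=\tfrac{n}{q+1}$ (and analogously with the exponents $p+\gamma-1$, $q+\gamma-1$ for the other energy spaces). Plugging these forced values of $\sigma_1,\sigma_2$ into either one of the two equation-invariance relations and using $\tfrac{r}{r+c}=1-\tfrac{c}{r+c}$ collapses that relation to a single symmetric condition on $p,q$: for \eqref{L-E} with the $L^{p+1}\times L^{q+1}$ energies one lands on \eqref{L-Ec}, for \eqref{hls} on \eqref{hlsc}, and for \eqref{pL-s}, resp. \eqref{wfs}, with the $L^{p+\gamma-1}\times L^{q+\gamma-1}$ energies on \eqref{pL-c}, resp. \eqref{wfsc}. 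A point worth displaying explicitly is that in each of these cases the \emph{other} equation-invariance relation yields the same condition, so there is no over-determination; the converse is then immediate by running the computation backwards to recover a scaling under which both the system and the two energies are invariant.

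The delicate part is the remaining claim in parts (2) and (4): that \eqref{pL-s}, resp. \eqref{wfs}, together with the \emph{mismatched} energies $\|u\|_{L^{p+1}}$, $\|v\|_{L^{q+1}}$ is invariant iff \eqref{pL-cc}, resp. \eqref{wfscc}, i.e. iff $p=q$ or $\gamma=2$. Now, after substituting $\sigma_1=\tfrac{n}{p+1}$ and $\sigma_2=\tfrac{n}{q+1}$, the two equation-invariance relations no longer coincide, and the trick is to subtract them: for \eqref{pL-s} this gives $\tfrac{n(\gamma-1)}{p+1}+\tfrac{n}{q+1}=\tfrac{n(\gamma-1)}{q+1}+\tfrac{n}{p+1}$, that is $(\gamma-2)\bigl(\tfrac{1}{p+1}-\tfrac{1}{q+1}\bigr)=0$, which is exactly $p=q$ or $\gamma=2$; the same subtraction works verbatim for \eqref{wfs}, the $\beta\gamma$ terms cancelling. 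Conversely, when $p=q$ the two relations are literally identical and when $\gamma=2$ their difference vanishes, so in either case the invariance requirements become compatible (reducing, respectively, to $p=q=\gamma^\ast-1$ and to \eqref{L-Ec}). I expect the main obstacle to be purely the careful bookkeeping of the $\mu$-exponent under $-\Delta_\gamma$ and under the Wolff potential, together with the correct reading of this degenerate case: the dichotomy in \eqref{pL-cc}/\eqref{wfscc} is precisely the compatibility condition forcing the two component-wise scaling requirements to agree, which is the structural reason — noted in Remark 1.4 — that the $L^{p+1}$- and $L^{p+\gamma-1}$-critical exponents for a single $\gamma$-Laplace equation line up only when $\gamma=2$.
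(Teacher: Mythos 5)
Your argument is correct and follows the paper's proof essentially line for line: introduce the two-parameter dilation $u_\mu=\mu^{\sigma_1}u(\mu\cdot)$, $v_\mu=\mu^{\sigma_2}v(\mu\cdot)$, read off the $\mu$-exponents in each system and each norm, obtain the four linear conditions on $(\sigma_1,\sigma_2)$, and eliminate. Your two bookkeeping remarks are sound refinements the paper leaves tacit: that in the matched-energy cases the two elimination constraints coincide (so elimination really does produce a single critical relation), and that in the mismatched $\gamma$-Laplace/Wolff cases passing the dichotomy $p=q$ or $\gamma=2$ still leaves one residual scalar constraint (forcing $p=q=\gamma^{*}-1$, resp. the corresponding matched-energy condition) before all four invariances actually hold simultaneously.
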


\begin{proof}
(1) Take the scaling transforms
\begin{equation} \label{scale}
u_\mu(x)=\mu^{\sigma_1} u(\mu x), \quad v_\mu(x)=\mu^{\sigma_2}
v(\mu x).
\end{equation}
Then
$$
-\Delta_x u_\mu(x)=\mu^{\sigma_1+2}[-\Delta_yu(y)]
=\mu^{\sigma_1+2}v^q(y)=\mu^{\sigma_1+2-q\sigma_2}v_\mu^q(x),
$$
and similarly,
$$
-\Delta v_\mu=\mu^{\sigma_2+2-p\sigma_1}u_\mu^p.
$$
On the other hand,
$$
\int_{R^n}u_\mu^{p+1}(x)dx=\mu^{\sigma_1(p+1)}\int_{R^n}u^{p+1}(\mu x)dx
=\mu^{\sigma_1(p+1)-n}\int_{R^n}u^{p+1}(y)dy,
$$
and similarly,
$$
\int_{R^n}v_\mu^{q+1}(x)dx
=\mu^{\sigma_2(q+1)-n}\int_{R^n}v^{q+1}(y)dy.
$$
Clearly, (\ref{L-E}) is invariant if and only if
$$
\sigma_1+2=q\sigma_2, \quad \sigma_2+2=p\sigma_1.
$$
Energy integrals are invariant if and only if
$$
\sigma_1(p+1)=n, \quad \sigma_2(q+1)=n.
$$
Eliminate $\sigma_1$ and $\sigma_2$. Then
$$
\frac{pq-1}{(p+1)(q+1)}=\frac{2}{n}.
$$
This is equivalent to (\ref{L-Ec}).

(2) In view of (\ref{scale}), we have
$$
-\Delta_\gamma u_\mu(x)=\mu^{\sigma_1(\gamma-1)+\gamma} [-\Delta_\gamma u(\mu x)]
=\mu^{\sigma_1(\gamma-1)+\gamma-q\sigma_2}v_\mu^q(x).
$$
Similarly,
$$
-\Delta_\gamma v_\lambda=\mu^{\sigma_2(\gamma-1)+\gamma-p\sigma_1}u_\mu^p(x).
$$
In addition,
$$\begin{array}{ll}
\displaystyle\int_{R^n} u_\mu^{p+\gamma-1}(x)dx
&=\mu^{\sigma_1(p+\gamma-1)}\displaystyle\int_{R^n}u^{p+\gamma-1}(\mu x)dx\\[3mm]
&=\mu^{\sigma_1(p+\gamma-1)-n}\displaystyle\int_{R^n}u^{p+\gamma-1}(y)dy,
\end{array}
$$
and similarly,
$$
\int_{R^n} v_\mu^{q+\gamma-1}(x)dx
=\mu^{\sigma_2(q+\gamma-1)-n}\int_{R^n}v^{q+\gamma-1}(y)dy.
$$
Eq. (\ref{pL-s}) is invariant if and only if
$$
\sigma_1(\gamma-1)+\gamma-q\sigma_2=0,\quad
\sigma_2(\gamma-1)+\gamma-p\sigma_1=0.
$$
Namely,
\begin{equation} \label{daxue}
\sigma_1=\frac{\gamma(q+\gamma-1)}{pq-(\gamma-1)^2},
\quad \sigma_2=\frac{\gamma(p+\gamma-1)}{pq-(\gamma-1)^2}.
\end{equation}
Energy integrals $\|u\|_{L^{p+\gamma-1}(R^n)}$ and
$\|v\|_{L^{q+\gamma-1}(R^n)}$ are invariant if and only if
$$
\sigma_1(p+\gamma-1)-n=0, \quad \sigma_2(q+\gamma-1)-n=0.
$$
Eliminating $\sigma_1$ and $\sigma_2$, we obtain (\ref{pL-c}).

Similarly, $\|u\|_{L^{p+1}(R^n)}$ and $\|v\|_{L^{q+1}(R^n)}$ are
invariant if and only if
$$
\sigma_1=\frac{n}{p+1}, \quad \sigma_2=\frac{n}{q+1}.
$$
Combining with (\ref{daxue}), we see
$(q+1)(p+\gamma-1)=(p+1)(q+\gamma-1)$. This is equivalent to
$(p-q)(\gamma-2)=0$. Thus, (\ref{pL-cc}) is the sufficient and
necessary condition.

(3)
Noting (\ref{scale}), we have
$$\begin{array}{ll}
v_\mu(x)&=\mu^{\sigma_2}\displaystyle\int_{R^n}\frac{u^p(y)dy}{|\mu x-y|^{n-\alpha}}
=\mu^{\sigma_2}\int_{R^n}\frac{\mu^n u^p(\mu z)dz}{|\mu(x-z)|^{n-\alpha}}\\[3mm]
&=\mu^{\sigma_2}\displaystyle\int_{R^n}\frac{\mu^{n-p\sigma_1}u_\mu^p(z)dz}
{\mu^{n-\alpha}|x-z|^{n-\alpha}}
=\mu^{\sigma_2-p\sigma_1+\alpha}
\displaystyle\int_{R^n}\frac{u_\mu^p(y)dy}{|x-y|^{n-\alpha}}.
\end{array}
$$
Similarly,
$$
u_\mu(x)=\mu^{\sigma_1-q\sigma_2+\alpha}
\int_{R^n}\frac{v_\mu^q(y)dy}{|x-y|^{n-\alpha}}.
$$
Thus, $u_\mu,v_\mu$ still solve (\ref{hls}) if and only if
$$
\sigma_1+\alpha=q\sigma_2, \quad \sigma_2+\alpha=p\sigma_1.
$$
By the same calculation in (1), energy integrals are
invariant if and only if
$$
\sigma_1(p+1)=n, \quad \sigma_2(q+1)=n.
$$
Eliminating $\sigma_1$ and $\sigma_2$, we deduce (\ref{hlsc}).

(4)
Noting (\ref{scale}), we have
$$\begin{array}{ll}
v_\mu(x)&=\mu^{\sigma_2}\displaystyle\int_0^\infty(\frac{\int_{B_t(\mu x)}u^p(y)dy}
{t^{n-\beta\gamma}})^{\frac{1}{\gamma-1}} \frac{dt}{t}\\[3mm]
&=\mu^{\sigma_2}\displaystyle\int_0^\infty(\frac{\int_{B_t(\mu x)} u^p(\mu z)d(\mu z)}
{t^{n-\beta\gamma}})^{\frac{1}{\gamma-1}} \frac{dt}{t}\\[3mm]
&=\mu^{\sigma_2}\displaystyle\int_0^\infty(\frac{\int_{B_s(x)}\mu^{n-p\sigma_1} u_\mu^p(z)dz}
{(\mu s)^{n-\beta\gamma}})^{\frac{1}{\gamma-1}} \frac{ds}{s}\\[3mm]
&=\mu^{\sigma_2+\frac{\beta\gamma-p\sigma_1}{\gamma-1}}
\displaystyle\int_0^\infty(\frac{\int_{B_s(x)}u_\mu^p(z)dz}
{s^{n-\beta\gamma}})^{\frac{1}{\gamma-1}} \frac{ds}{s}.
\end{array}
$$
Similarly,
$$
u_\mu(x)=\mu^{\sigma_1+\frac{\beta\gamma-q\sigma_2}{\gamma-1}}
\displaystyle\int_0^\infty(\frac{\int_{B_s(x)}v_\mu^q(z)dz}
{s^{n-\beta\gamma}})^{\frac{1}{\gamma-1}} \frac{ds}{s}.
$$
Thus, $u_\mu,v_\mu$ still solve (\ref{wfs}) if and only if
$$
(\gamma-1)\sigma_1+\beta\gamma=q\sigma_2,
\quad (\gamma-1)\sigma_2+\beta\gamma=p\sigma_1.
$$
By the same calculation in (2), energy integrals
$\|u\|_{L^{p+\gamma-1}(R^n)}$ and $\|v\|_{L^{q+\gamma-1}(R^n)}$
are invariant if and only if
$$
\sigma_1(p+\gamma-1)=n, \quad \sigma_2(q+\gamma-1)=n.
$$
Eliminating $\sigma_1$ and $\sigma_2$, we deduce (\ref{wfsc}).

By the same argument in (2), (\ref{wfscc}) is another
corresponding sufficient and necessary condition.
\end{proof}

\subsection{Existence and the critical conditions}

In this subsection, we first show that (\ref{L-Ec}) is the critical condition
of the existence of the finite energy solution of (\ref{L-E}).
We call the positive classical solutions $u,v$ of (\ref{L-E}) {\it
finite energy solutions}, if $u \in L^{p+1}(R^n)
\cap L^{2^*}(R^n)$, and $v \in L^{q+1}(R^n) \cap L^{2^*}(R^n)$.

\begin{theorem} \label{th5.2}
The system (\ref{L-E}) has a pair of finite energy solutions $(u,v)$
if and only if (\ref{L-Ec}) holds.
\end{theorem}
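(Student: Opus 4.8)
The plan is to establish the equivalence of (\ref{L-E}), for finite energy solutions, with the Hardy--Littlewood--Sobolev integral system (\ref{hls}) taken with $\alpha=2$, and then to read off the critical condition from part~(2) of Theorem~\ref{th1.2}; the genuine analytic content of the necessity direction is a Pohozaev identity for the system, the exact analogue of the computation proving Theorem~\ref{th4.5}. For \emph{sufficiency}, observe first that (\ref{L-Ec}) forces $pq>1$ (since $\tfrac1{p+1}+\tfrac1{q+1}<1$). Theorem~\ref{th1.2}(2) with $\alpha=2$ then furnishes a positive pair $(u,v)\in L^{p+1}(R^n)\times L^{q+1}(R^n)$ solving (\ref{hls}); applying $-\Delta$ and rescaling $(u,v)\mapsto(\lambda u,\mu v)$ with $\lambda,\mu$ chosen to absorb the Newtonian constant (solvable precisely because $pq>1$) produces a classical solution pair of (\ref{L-E}) still in $L^{p+1}\times L^{q+1}$. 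That $u,v\in L^{2^*}(R^n)$ as well follows from regularity lifting (cf. \cite{Hang}, \cite{JL}), which gives $u,v\in L^\infty(R^n)$, together with the decay classification of finite energy solutions on the critical hyperbola (cf. the Note in the proof of Theorem~\ref{th3.1}, and \cite{SL}, \cite{LLM-CV}), by which $u$ and $v$ decay like $|x|^{2-n}$ up to a logarithmic factor and hence lie in $L^{2^*}$.

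For \emph{necessity}, let $(u,v)$ be a finite energy solution of (\ref{L-E}). Since $-\Delta u=v^q\ge0$ and $-\Delta v=u^p\ge0$, both $u$ and $v$ are positive superharmonic on $R^n$; the Riesz decomposition writes each as a Newtonian potential plus a nonnegative harmonic function, which by the Liouville theorem is a constant, and that constant must vanish because $u,v\in L^{2^*}(R^n)$. Hence $u=c_n\int_{R^n}|x-y|^{2-n}v^q\,dy$ and $v=c_n\int_{R^n}|x-y|^{2-n}u^p\,dy$ (this is the equivalence used in \cite{ChLO}, \cite{CL-2010}, \cite{LGZ}), and after the same rescaling $(u,v)$ is a finite energy solution of (\ref{hls}) with $\alpha=2$, so Theorem~\ref{th1.2}(2) yields (\ref{L-Ec}). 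The underlying reason, more directly: testing $-\Delta u=v^q$ against $v$ and $-\Delta v=u^p$ against $u$ on $B_R$ gives $\int_{B_R}\nabla u\cdot\nabla v=\int_{B_R}v^{q+1}+o(1)=\int_{B_R}u^{p+1}+o(1)$, while testing $-\Delta u=v^q$ against $x\cdot\nabla v$, testing $-\Delta v=u^p$ against $x\cdot\nabla u$, adding, and integrating by parts (using $\nabla w\cdot\nabla(x\cdot\nabla z)+\nabla z\cdot\nabla(x\cdot\nabla w)=2\nabla w\cdot\nabla z+x\cdot\nabla(\nabla w\cdot\nabla z)$) gives
\[ (2-n)\int_{B_R}\nabla u\cdot\nabla v=-\frac{n}{p+1}\int_{B_R}u^{p+1}-\frac{n}{q+1}\int_{B_R}v^{q+1}+(\text{boundary}_R). \]
Sending $R=R_j\to\infty$ along a sequence on which the boundary integrals vanish and setting $E:=\int_{R^n}\nabla u\cdot\nabla v=\int u^{p+1}=\int v^{q+1}>0$ gives $n-2=n\big(\tfrac1{p+1}+\tfrac1{q+1}\big)$, i.e. (\ref{L-Ec}).

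The main obstacle is exactly the control of the boundary integrals: one needs $R_j\to\infty$ with $R_j\int_{\partial B_{R_j}}\big(|\nabla u|\,|\nabla v|+u^{p+1}+v^{q+1}\big)\to0$ and $\int_{\partial B_{R_j}}\big(v|\nabla u|+u|\nabla v|\big)\to0$, which by Proposition~\ref{prop2.1} reduces to showing $\nabla u,\nabla v\in L^2(R^n)$ and $v|\nabla u|,\,u|\nabla v|\in L^1(R^n)$. The gradient bounds come from a cut-off argument in the spirit of Theorem~\ref{th4.4} — test the two equations against $v\zeta_R^2$ and $u\zeta_R^2$ and add, bounding the $\nabla\zeta_R$-terms by $u,v\in L^{2^*}$ via H\"older — and the integrability of the mixed products then follows from those bounds together with the decay of $u,v$, which is cleanest to extract from the integral representation. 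This interdependence is precisely why, in practice, I would route the argument through the equivalence with (\ref{hls}) and Theorem~\ref{th1.2}(2), where the requisite decay and integrability are already in hand; the Pohozaev computation above is the conceptual skeleton, and the system analogue of the proof of Theorem~\ref{th4.5}.
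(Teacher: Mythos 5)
Your proposal is essentially correct and, at the level of the key analytic ingredient (a Pohozaev identity plus control of boundary terms via Proposition~\ref{prop2.1} and $L^{2^*}\cap L^{p+1}$ integrability), aligns with the paper's strategy. The routing differs in two places worth noting.

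For sufficiency the paper simply cites Serrin--Zou \cite{SZ-1998}, which directly produces a finite energy positive pair for (\ref{L-E}) on the critical hyperbola; you instead propose to start from Lieb's extremals for the HLS inequality (Theorem~\ref{th1.2}(2)), transfer them to (\ref{L-E}) via the integral representation and a constant rescaling, and then check $u,v\in L^{2^*}$ by regularity lifting and the decay classification. Both are legitimate; the paper's citation is shorter, while your route buys a uniform treatment of the scalar equation, the HLS system, and the PDE system, and makes the role of the HLS extremizers explicit.

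For necessity the paper invokes the Serrin--Zou parameterized Pohozaev identity (\ref{PR}) with free $a_1,a_2$ satisfying $a_1+a_2=n-2$, kills the $v^{q+1}$ term by choosing $a_2=n/(q+1)$, and reads off (\ref{L-Ec}) from positivity of $\int u^{p+1}$. You instead use the \emph{symmetric} Pohozaev identity obtained by pairing each equation with the radial derivative of the \emph{other} unknown, together with the balance $\int u^{p+1}=\int v^{q+1}=\int\nabla u\cdot\nabla v$. This is a correct and arguably more transparent variant; the free-parameter version has the small advantage of not needing the balance identity as a separate step.

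One caveat about the step you flag as ``the main obstacle.'' Testing $-\Delta u=v^q$ against $v\zeta_R^2$ and $-\Delta v=u^p$ against $u\zeta_R^2$ and adding controls only $\int\nabla u\cdot\nabla v\,\zeta_R^2$, which is not a priori of one sign and does not by itself yield $\nabla u,\nabla v\in L^2(R^n)$ separately (the Young-inequality cross terms $\int v\zeta_R\nabla u\cdot\nabla\zeta_R$ produce an $\epsilon\int|\nabla u|^2\zeta_R^2$ that cannot be absorbed into a $\nabla u\cdot\nabla v$ left-hand side). To get the separate $L^2$ bounds one should instead test $-\Delta u=v^q$ against $u\zeta_R^2$ and $-\Delta v=u^p$ against $v\zeta_R^2$, which produces $\int|\nabla u|^2\zeta_R^2\lesssim\int uv^q\zeta_R^2+C$ and its companion; the source terms $\int uv^q$, $\int vu^p$ are then finite by $u=c_nI_2(v^q)$, $v=c_nI_2(u^p)$ and the HLS inequality. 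The paper itself glosses this point with ``an analogous argument of Theorem~\ref{th4.4}''; your version, as literally stated, chooses the wrong test functions for this particular step. Since you ultimately recommend routing through the integral-system equivalence, this gap does not invalidate your preferred argument, but the PDE-side sketch should be corrected.
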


\begin{proof}
Serrin and Zou \cite{SZ-1998} proved the existence if (\ref{L-Ec}) is true.
Next, we will deduce (\ref{L-Ec}) from the existence. Denote $B_R(0)$ by $B$.
According to Proposition 5.1 in \cite{SZ-DIE} (or cf. Lemma 2.6 in \cite{Souplet}),
the solutions $u,v$ satisfy the Pohozaev
type identity
\begin{equation}
\begin{array}{ll}
&(\displaystyle\frac{n}{p+1}-a_1)\int_B u^{p+1}dx+(\frac{n}{q+1}-a_2)\int_B v^{q+1}dx\\[3mm]
&=R^n\displaystyle\int_{S^{n-1}}(\frac{u^{p+1}}{p+1}+\frac{v^{q+1}}{q+1})ds
+R^{n-1}\int_{S^{n-1}}(a_1u\partial_rv+a_2v\partial_ru)ds\\[3mm]
&\quad +R^n\displaystyle\int_{S^{n-1}}(\partial_ru\partial_rv
-\frac{\partial_\theta u\partial_\theta v}{R^2})ds,
\end{array}
\label{PR}
\end{equation}
where $a_2,a_2 \in R$ satisfy $a_1+a_2=n-2$. Since $u,v$ are
finite energy solutions, we know $\nabla u,\nabla v \in L^2(R^n)$ by an
analogous argument of Theorem \ref{th4.4}.
Using Proposition \ref{prop2.1} and the
Young inequality, we can find
$R_j \to \infty$, such that all the terms in the right hand side converge to zero.
Letting $R=R_j \to \infty$ in the Pohozaev identity above,
we obtain
$$
(\frac{n}{p+1}-a_1)\int_{R^n}
u^{p+1}dx+(\frac{n}{q+1}-a_2)\int_{R^n} v^{q+1}dx=0
$$
for any $a_1,a_2$ as long as $a_1+a_2=n-2$.
Take $a_2=\frac{n}{q+1}$, then
$$
(\frac{n}{p+1}-a_1)\int_{R^n} u^{p+1}dx=0.
$$
This implies $0=\frac{n}{p+1}-a_1=\frac{n}{p+1}-(n-2-a_2)
=\frac{n}{p+1}-(n-2)+\frac{n}{q+1}$. So (\ref{L-Ec}) is verified.
\end{proof}

Next, we consider the HLS type system. Since (\ref{hls}) is the
Euler-Lagrange system of the extremal functions of the HLS inequality
which implies $(u,v) \in L^{p+1}(R^n) \times L^{q+1}(R^n)$,
we naturally call such solutions (belonging to $L^{p+1}(R^n) \times L^{q+1}(R^n)$)
of (\ref{hls}) as {\it finite energy solutions}.

\begin{theorem} \label{th5.3}
The HLS type system (\ref{hls}) has the finite energy solutions if and only if
(\ref{hlsc}) holds.
\end{theorem}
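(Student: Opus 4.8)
The plan is to mirror, almost verbatim, the two-step treatment of the scalar statement in Theorem~\ref{th4.5}. For \emph{sufficiency}, note that (\ref{hlsc}) is exactly the condition under which (\ref{hls}) is, up to normalizing constants, the Euler--Lagrange system for maximizers of the sharp Hardy--Littlewood--Sobolev inequality $\big|\int_{R^n}\int_{R^n}\frac{F(x)G(y)}{|x-y|^{n-\alpha}}\,dx\,dy\big|\le C\|F\|_{L^r}\|G\|_{L^s}$ with $\frac1r+\frac1s+\frac{n-\alpha}{n}=2$, the correspondence being $r=\frac{p+1}{p}$, $s=\frac{q+1}{q}$ (so that $\frac1r+\frac1s+\frac{n-\alpha}{n}=2$ rewrites as $\frac1{p+1}+\frac1{q+1}=\frac{n-\alpha}{n}$), with $F$ a constant multiple of $u^p$ and $G$ a constant multiple of $v^q$. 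Since Lieb \cite{Lieb} established that nonnegative maximizers exist, and a maximizer lies in $L^r(R^n)\times L^s(R^n)$, i.e. $u\in L^{p+1}(R^n)$, $v\in L^{q+1}(R^n)$, this produces a pair of positive finite energy solutions of (\ref{hls}) whenever (\ref{hlsc}) holds; see also \cite{CLO-CPDE}.

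For \emph{necessity}, assume $(u,v)\in L^{p+1}(R^n)\times L^{q+1}(R^n)$ is a pair of positive classical solutions of (\ref{hls}); after the usual regularity lifting one may take $u,v$ smooth. I would first record the elementary identity $\int_{R^n}u^{p+1}\,dx=\int_{R^n}\int_{R^n}\frac{u^p(x)v^q(y)}{|x-y|^{n-\alpha}}\,dx\,dy=\int_{R^n}v^{q+1}\,dx$, which follows from Tonelli's theorem, the symmetry of the Riesz kernel, and the two equations of (\ref{hls}); both sides are finite by the finite energy hypothesis. Next, exactly as in the derivation of (\ref{phzv}), apply the scaling $x\mapsto\mu x$ to the first equation to get $u(\mu x)=\mu^{\alpha}\int_{R^n}\frac{v^q(\mu z)\,dz}{|x-z|^{n-\alpha}}$, differentiate in $\mu$, and set $\mu=1$ to obtain $x\cdot\nabla u(x)=\alpha u(x)+\int_{R^n}\frac{z\cdot\nabla v^q(z)}{|x-z|^{n-\alpha}}\,dz$.

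Then I would integrate by parts in $z$ over $B_R=B_R(0)$ inside the last integral (as in (\ref{ibp})), choose a sequence $R_j\to\infty$ from Proposition~\ref{prop2.1} applied to the $L^1$ functions $u^{p+1}$ and $v^{q+1}$, and show the boundary contribution $R\int_{\partial B_R}\frac{v^q(z)}{|x-z|^{n-\alpha}}\,ds$ tends to zero; here the crucial input is that, by Theorem~\ref{th3.1} specialized to $c_1\equiv c_2\equiv1$, the mere existence of a solution forces $p,q>\frac{\alpha}{n-\alpha}$, which makes the relevant power of $R$ negative, just as the bound $p\ge\frac{n}{n-\alpha}$ from Theorem~\ref{th2.3} was used in Theorem~\ref{th4.5}. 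Multiplying the Pohozaev relation by $u^p$ and integrating over $R^n$, one has $\int_{R^n}u^p(x\cdot\nabla u)\,dx=-\frac{n}{p+1}\int_{R^n}u^{p+1}\,dx$; on the right, swapping the order of integration and using the second equation $\int_{R^n}\frac{u^p(x)}{|x-z|^{n-\alpha}}\,dx=v(z)$ turns the double integral into $\int_{R^n}v(z)\,(z\cdot\nabla v^q(z))\,dz$, which a further integration by parts evaluates as $-\frac{nq}{q+1}\int_{R^n}v^{q+1}\,dz$. This gives $-\frac{n}{p+1}\int_{R^n}u^{p+1}=\alpha\int_{R^n}u^{p+1}-\frac{nq}{q+1}\int_{R^n}v^{q+1}$; dividing by the common positive value $\int u^{p+1}=\int v^{q+1}$ and using $\frac{nq}{q+1}=n-\frac{n}{q+1}$ yields $\frac{n}{p+1}+\frac{n}{q+1}=n-\alpha$, i.e. (\ref{hlsc}).

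The main obstacle I anticipate is the rigorous justification of the limiting and interchange steps: differentiating under the integral sign in $\mu$, applying Tonelli/Fubini to the iterated Riesz-potential integrals, and—most delicately—showing that every boundary term produced by integration by parts vanishes along a suitable sequence $R_j\to\infty$. It is precisely this last point that forces the appeal to Theorem~\ref{th3.1} for the a priori exponent bounds $p,q>\frac{\alpha}{n-\alpha}$; once those are in hand, the remaining computations are bookkeeping patterned on the scalar case.
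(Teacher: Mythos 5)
Your proposal is correct, and it is broadly the same Pohozaev-in-integral-form strategy as the paper; the sufficiency via Lieb's maximizers is identical, and the necessity uses the same starting move (differentiate the dilated equation at $\mu=1$), the same identity $\int u^{p+1}=\int v^{q+1}$, and the same a priori bound $p,q>\frac{\alpha}{n-\alpha}$ to kill the boundary term (you cite Theorem~\ref{th3.1} with $c_1\equiv c_2\equiv1$, the paper cites Remark~1.2/[CL-DCDS]; both give the same bound). Where you genuinely diverge is in the bookkeeping: the paper derives the Pohozaev relation for \emph{both} components, multiplies by $v^q$ and $u^p$ respectively, \emph{adds}, and uses the antisymmetry $z\cdot(x-z)+x\cdot(z-x)=-|x-z|^2$ to collapse the two mixed double integrals into $-\int\int\frac{u^p v^q}{|x-z|^{n-\alpha}}$, which equals $-\int v^{q+1}$; you instead derive the relation only for $u$, multiply by $u^p$, then swap the order of integration and invoke the second equation $\int\frac{u^p(x)\,dx}{|x-z|^{n-\alpha}}=v(z)$ to turn the double integral into the single integral $\int v\,(z\cdot\nabla v^q)\,dz$, evaluated by one more integration by parts. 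Your route is shorter and avoids the symmetrization trick, at the price of a Fubini swap on $\int\int\frac{u^p(x)\,(z\cdot\nabla v^q(z))}{|x-z|^{n-\alpha}}\,dz\,dx$, whose absolute convergence is not automatic because of the growing factor $z$; the paper's symmetrized form $\int\int\frac{u^pv^q}{|x-z|^{n-\alpha}}$ is manifestly integrable, so its interchange is cleaner to justify. To make your argument airtight you should carry out the swap after the integration by parts in $z$ (so you are swapping the double integral with kernel $\frac{z\cdot(x-z)}{|x-z|^{n-\alpha+2}}$, which you can then estimate), or simply derive the companion relation for $v$ and add as the paper does; either patch is routine. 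Aside from that one justification, the computation and the conclusion $\frac{1}{p+1}+\frac{1}{q+1}=\frac{n-\alpha}{n}$ are correct.
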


\begin{proof}
{\it Sufficiency.} Clearly, the extremal functions of the HLS inequality
are the finite energy solutions. Lieb \cite{Lieb} obtained the
existence of those extremal functions.

{\it Necessity.} The Pohozaev type identity in integral forms is used here.

For any $\mu \neq 0$, there holds
$$
v(\mu x)=\int_{R^n}\frac{u^p(y)dy}{|\mu x-y|^{n-\alpha}}
=\mu^{\alpha}\int_{R^n}\frac{u^p(\mu z)dz}{|x-z|^{n-\alpha}}.
$$
Differentiate both sides with respect to $\mu$ and let $\mu=1$. Then,
\begin{equation} \label{L8}
x \cdot \nabla v=\alpha v
+\int_{R^n}\frac{z\cdot \nabla u^p(z)dz}{|x-z|^{n-\alpha}}.
\end{equation}

According to Remark 1.2 (1) (or cf. Theorem 1 in \cite{CL-DCDS}),
if $p,q \leq \frac{\alpha}{n-\alpha}$, (\ref{hls}) has no any positive
solution. Therefore, $(u,v)$ solves (\ref{hls}) implies
$p,q>\frac{\alpha}{n-\alpha}$.
Similar to the derivation of (\ref{L6}), if follows
$$
R\int_{\partial B_R}\frac{u^p(z)ds}{|x-z|^{n-\alpha}} \to 0,
\quad R\int_{\partial B_R}\frac{v^q(z)ds}{|x-z|^{n-\alpha}} \to 0,
$$
when $R=R_j \to \infty$. Thus, integrating by parts, we obtain
$$
\int_{R^n}\frac{z\cdot \nabla u^p(z)dz}{|x-z|^{n-\alpha}}
=-nv-(n-\alpha)
\int_{R^n}\frac{(z\cdot(x-z))u^p(z)}{|x-z|^{{n-\alpha}+2}}dz.
$$

Multiplying (\ref{L8}) by $v^q(x)$ we get
$$\begin{array}{ll}
&\quad\displaystyle\int_{R^n}v^q(x)(x\cdot \nabla v(x))dx\\[3mm]
&=\alpha\displaystyle\int_{R^n}v^{q+1}(x)dx
+\int_{R^n}v^q(x)dx\int_{R^n}\frac{z\cdot \nabla u^p(z)dz}{|x-z|^{n-\alpha}}\\[3mm]
&=\alpha\displaystyle\int_{R^n}v^{q+1}(x)dx
-n\int_{R^n}v^{q+1}(x)dx\\[3mm]
&\quad-(n-\alpha)\displaystyle\int_{R^n}
\int_{R^n}\frac{(z\cdot(x-z))v^q(x)u^p(z)}{|x-z|^{{n-\alpha}+2}}dzdx.
\end{array}
$$
Similarly, there also holds
$$\begin{array}{ll}
&\quad\displaystyle\int_{R^n}u^p(x)(x\cdot \nabla u(x))dx\\[3mm]
&=(\alpha-n)\displaystyle\int_{R^n}u^{p+1}(x)dx
-(n-\alpha)\displaystyle\int_{R^n}
\int_{R^n}\frac{(z\cdot(x-z))v^q(z)u^p(x)}{|x-z|^{{n-\alpha}+2}}dzdx\\[3mm]
&=(\alpha-n)\displaystyle\int_{R^n}u^{p+1}(x)dx
-(n-\alpha)\displaystyle\int_{R^n}
\int_{R^n}\frac{(x\cdot(z-x))v^q(x)u^p(z)}{|x-z|^{{n-\alpha}+2}}dzdx.
\end{array}
$$
By virtue of $z\cdot(x-z)+x\cdot(z-x)=-|x-z|^2$, it follows that
$$\begin{array}{ll}
&\quad\displaystyle
\int_{R^n}v^q(x)(x\cdot \nabla v(x))dx
+\int_{R^n}u^p(x)(x\cdot \nabla u(x))dx\\[3mm]
&=(\alpha-n)(\displaystyle\int_{R^n}v^{q+1}(x)dx
+\int_{R^n}u^{p+1}(x)dx)\\[3mm]
&\quad +(n-\alpha)\displaystyle\int_{R^n}
\int_{R^n}\frac{v^q(x)u^p(z)}{|x-z|^{{n-\alpha}}}dzdx.
\end{array}
$$
On the other hand, integrating by parts leads to
$$\begin{array}{ll}
\displaystyle\int_{R^n}v^q(x)(x\cdot \nabla v(x))dx
&=\displaystyle\frac{1}{q+1}\int_{R^n}(x\cdot\nabla v^{q+1}(x))dx\\[3mm]
&=\displaystyle\frac{-n}{q+1}\int_{R^n}v^{q+1}(x)dx
\end{array}
$$
and similarly $\int_{R^n}u^p(x)(x\cdot \nabla u(x))dx
=\frac{-n}{p+1}\int_{R^n}u^{p+1}(x)dx$.
Inserting these into the result above, we deduce that
$$\begin{array}{ll}
&\quad-\displaystyle
\frac{n}{q+1}\int_{R^n}v^{q+1}(x)dx
-\frac{n}{p+1}\int_{R^n}u^{p+1}(x)dx\\[3mm]
&=(\alpha-n)(\displaystyle\int_{R^n}v^{q+1}(x)dx
+\int_{R^n}u^{p+1}(x)dx)\\[3mm]
&\quad +(n-\alpha)\displaystyle\int_{R^n}
\int_{R^n}\frac{v^q(x)u^p(z)}{|x-z|^{{n-\alpha}}}dzdx.
\end{array}
$$
From (\ref{hls}), it follows that
$$\begin{array}{ll}
&\displaystyle\int_{R^n}v^{q+1}(x)dx
=\int_{R^n}v^q(x)dx\int_{R^n}\frac{u^p(y)dy}{|x-y|^{n-\alpha}}\\[3mm]
&=\displaystyle\int_{R^n}u^p(x)dx\int_{R^n}\frac{v^q(y)dy}{|x-y|^{n-\alpha}}
=\int_{R^n}u^{p+1}(x)dx.
\end{array}
$$
Substituting this into the result above yields
$$
\frac{1}{p+1}+\frac{1}{q+1}=\frac{n-\alpha}{n}.
$$
Theorem \ref{th5.3} is proved.
\end{proof}

\begin{corollary} \label{coro5.4}
Let $k \in [1,n/2)$ be an integer and $pq>1$. The $2k$-order system
$$
 \left \{
   \begin{array}{l}
      (-\Delta)^k u=v^q,\\
      (-\Delta)^k v=u^p,
   \end{array}
   \right.
$$
has a pair of finite energy positive solutions $(u,v)$,
then
$$
\frac{1}{p+1}+\frac{1}{q+1}=\frac{n-2k}{n}.
$$
\end{corollary}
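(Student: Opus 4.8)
The plan is to deduce Corollary \ref{coro5.4} directly from Theorem \ref{th5.3} by passing through the equivalence between the $2k$-order differential system and the integral system (\ref{hls}) with $\alpha=2k$, exactly as Corollary \ref{coro2.4} and Corollary \ref{coro3.2} were obtained from their integral counterparts. Let $(u,v)$ be a pair of finite energy positive solutions of the $2k$-order system. Since $pq>1$, the result of Liu, Guo and Zhang \cite{LGZ} applies and gives $(-\Delta)^i u>0$ and $(-\Delta)^i v>0$ for $i=1,\dots,k-1$. Feeding this sign information into the iterated Newton-potential representation of \cite{ChLO} (see also \cite{CL-2010}), one shows step by step that $(-\Delta)^{i}u$ is the Newton potential of $(-\Delta)^{i+1}u$ with no polyharmonic remainder, and likewise for $v$; composing these $k$ identities yields
$$
u(x)=\int_{R^n}\frac{v^q(y)\,dy}{|x-y|^{n-2k}},\qquad
v(x)=\int_{R^n}\frac{u^p(y)\,dy}{|x-y|^{n-2k}},
$$
so that $(u,v)$ solves (\ref{hls}) with $\alpha=2k$.

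Next I would observe that the finite energy hypothesis survives this reduction: by definition the pair lies in $L^{p+1}(R^n)\times L^{q+1}(R^n)$, which is precisely the solution class handled by Theorem \ref{th5.3}. Hence the necessity part of Theorem \ref{th5.3}, specialized to $\alpha=2k$, forces
$$
\frac{1}{p+1}+\frac{1}{q+1}=\frac{n-2k}{n},
$$
which is the assertion of the corollary.

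The hard part is the equivalence step, i.e. verifying that the polyharmonic remainders in the iterated representation genuinely vanish. This is where the positivity $(-\Delta)^iu>0$, $(-\Delta)^iv>0$ from \cite{LGZ} together with the global integrability $u\in L^{p+1}(R^n)$, $v\in L^{q+1}(R^n)$ are indispensable: they exclude nonzero polyharmonic terms of at most polynomial growth when one compares on expanding balls, and they are the only inputs needed beyond a direct citation of Theorem \ref{th5.3}. Once (\ref{hls}) with $\alpha=2k$ is in place, nothing further is required. Alternatively, one could bypass the integral equation and argue directly with a Pohozaev type identity in differential form for the $2k$-order system, combined with the decay estimates coming from \cite{LGZ} and Proposition \ref{prop2.1}, but the reduction above is shorter.
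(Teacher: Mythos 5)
Your argument follows essentially the same route as the paper: invoke \cite{LGZ} (with $pq>1$) to obtain positivity of the intermediate iterated Laplacians, use this to establish equivalence of the $2k$-order system with the integral system (\ref{hls}) with $\alpha=2k$ (the paper cites \cite{CL-2010} for this step), and then apply the necessity part of Theorem \ref{th5.3}. The extra detail you supply about why the polyharmonic remainders vanish is accurate but only fleshes out the same citation the paper makes.
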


\begin{proof}
Since $pq>1$, \cite{LGZ}
proved that the solutions $u,v$ of this system satisfy $(-\Delta)^i u
\geq 0$, $(-\Delta)^i v \geq 0$ for $i=1,2,\cdots,k-1$. Thus, this
system is equivalent to the integral system (\ref{hls}) with $\alpha=2k$
(cf. \cite{CL-2010}). According to Theorem \ref{th5.3}, we can also
derive the conclusion.
\end{proof}

\section{Infinite energy solutions}

\subsection{Existence in supercritical case}
For semilinear Lane-Emden equation
(\ref{1.1}), Li \cite{YiLi} obtained a positive solution with
the slow decay rate
$$
u(x)=O(|x|^{-\frac{2}{p-1}}), \quad when ~|x| \to \infty.
$$
According to Corollary \ref{coro1.3}, it is not the finite energy solution.

In this section, we prove that there also exists an infinite energy solution
for bi-Laplace equation in the supercritical case $p>\frac{p+4}{p-4}$.

Clearly,
$$
(-\Delta)^2 u=u^p, \quad in ~R^n,
$$
is equivalent to
$$
 \left \{
   \begin{array}{l}
      -\Delta u=v,\\
      -\Delta v=u^p.
   \end{array}
   \right.
$$
We search the positive solutions with radial structures.
The existence can be implied by the following argument.

\begin{theorem} \label{th6.1}
The following ODE system
\begin{equation}
 \left \{
   \begin{array}{l}
      -(u''+\frac{n-1}{r}u')=v, \quad -(v''+\frac{n-1}{r}v')=u^p,
      \quad r>0\\
      u'(0)=v'(0)=0, \quad u(0)=1, \quad v(0)=a,
   \end{array}
   \right. \label{ODE}          
 \end{equation}
has entire solutions satisfying $\lim_{|x| \to \infty}u(x)
=\lim_{|x| \to \infty}v(x)=0$.
\end{theorem}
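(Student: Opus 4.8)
The plan is to use a shooting argument in the parameter $a=v(0)>0$, keeping $u(0)=1$ fixed. For each $a$, standard ODE theory provides a unique $C^{2}$ solution $(u(\cdot;a),v(\cdot;a))$ on a maximal interval $[0,R_{a})$ depending continuously on $a$; it is convenient to read $u^{p}$ as $(u^{+})^{p}$ so that the system stays globally defined once $u$ touches zero. Integrating,
\[
r^{\,n-1}u'(r)=-\int_{0}^{r}s^{\,n-1}v\,ds,\qquad
r^{\,n-1}v'(r)=-\int_{0}^{r}s^{\,n-1}u^{p}\,ds,
\]
one gets the elementary facts: $u$ is strictly decreasing while $v>0$, $v$ is strictly decreasing while $u>0$, and at the first zero of $u$ (resp. $v$) reached while $v$ (resp. $u$) has stayed positive on the interval, the relevant first derivative is \emph{strictly} negative, so that zero is transversal. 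Denote by $T_{u}(a),T_{v}(a)\in(0,+\infty]$ the first zeros of $u(\cdot;a)$ and $v(\cdot;a)$.

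Introduce $\mathcal A=\{a>0:T_{u}(a)<T_{v}(a)\}$ (``$u$ overshoots'') and $\mathcal B=\{a>0:T_{v}(a)<T_{u}(a)\}$ (``$v$ overshoots''); these are disjoint, and the transversality together with continuous dependence on $a$ makes both open. Each is nonempty: if $a$ is large, the bound $v\ge a/2$ on $[0,\sqrt{na}\,]$ forces $u$ to vanish by radius $2\sqrt{n/a}\le\sqrt{na}$ while $v$ is still positive, so large $a\in\mathcal A$; if $a$ is small, the bound $u>\frac12$ on a correspondingly long initial interval forces $v$ to vanish first, so small $a\in\mathcal B$. Since $(0,\infty)$ is connected and $\mathcal A,\mathcal B$ are nonempty, open and disjoint, the set $\mathcal G:=(0,\infty)\setminus(\mathcal A\cup\mathcal B)=\{a:T_{u}(a)=T_{v}(a)\}$ is nonempty.

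It remains to show that some $a^{\ast}\in\mathcal G$ has $T_{u}(a^{\ast})=T_{v}(a^{\ast})=+\infty$, and this is where the supercriticality $p>\frac{n+4}{n-4}$ enters. Suppose some $a\in\mathcal G$ had $T_{u}(a)=T_{v}(a)=R<\infty$. Then $(u,v)$ would be a positive solution on the ball $B_{R}$ of $-\Delta u=v$, $-\Delta v=u^{p}$ with $u=v=0$ and $\partial_{\nu}u<0$, $\partial_{\nu}v<0$ on $\partial B_{R}$. Multiplying the equations by $x\cdot\nabla v$ and $x\cdot\nabla u$, adding, integrating by parts over $B_{R}$ (the boundary terms containing $u,v$ themselves vanish), and using $\int_{B_{R}}\nabla u\cdot\nabla v=\int_{B_{R}}v^{2}=\int_{B_{R}}u^{p+1}$, one obtains the Pohozaev identity
\[
\Big(2-\frac n2+\frac n{p+1}\Big)\int_{B_{R}}u^{p+1}\,dx
=R\int_{\partial B_{R}}(\partial_{\nu}u)(\partial_{\nu}v)\,ds .
\]
The right side is strictly positive, while the coefficient on the left is $\le 0$ precisely when $\frac{1}{p+1}+\frac12\le\frac{n-2}{n}$, i.e. when $p\ge\frac{n+4}{n-4}$; this contradiction rules out $T_{u}(a)=T_{v}(a)<\infty$. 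Hence every $a^{\ast}\in\mathcal G$ (and $\mathcal G\ne\emptyset$) satisfies $T_{u}(a^{\ast})=T_{v}(a^{\ast})=+\infty$, i.e. $u(\cdot;a^{\ast})>0$ and $v(\cdot;a^{\ast})>0$ everywhere.

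Fix such an $a^{\ast}$ and write $u=u(\cdot;a^{\ast})$, $v=v(\cdot;a^{\ast})$. Then $u$ is decreasing with $0<u\le1$ and $v$ is decreasing with $0<v\le a^{\ast}$; the integral formulas bound $|u'|,|v'|$ on bounded intervals, so no blow-up occurs and $R_{a^{\ast}}=\infty$. Thus $u\downarrow\ell_{u}\ge0$ and $v\downarrow\ell_{v}\ge0$ as $r\to\infty$. If $\ell_{v}>0$, the first integral formula gives $r^{\,n-1}u'(r)\le-\ell_{v}r^{n}/n$, hence $u(r)\le u(1)-\ell_{v}(r^{2}-1)/(2n)\to-\infty$, impossible; so $\ell_{v}=0$, and similarly $\ell_{u}>0$ would force $v\to-\infty$ through the second equation, so $\ell_{u}=0$. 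Therefore $u(x)=u(|x|)\to0$ and $v(x)=v(|x|)\to0$ as $|x|\to\infty$, which is the claim. The main obstacle is the Pohozaev step — together with the transversality and continuous-dependence bookkeeping needed to make $\mathcal A,\mathcal B$ open — while the closing global-existence-and-decay argument is elementary.
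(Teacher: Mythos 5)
Your argument is correct and follows essentially the same route as the paper: a shooting method in $a=v(0)$ with the two "overshoot" sets made open by transversality and continuous dependence (the paper phrases this as taking $\underline{a}=\sup\underline{S}$ and deriving contradictions in cases (i)–(iii), which is the same bookkeeping as your connectedness argument), combined with the Pohozaev identity for the Navier problem on a ball to rule out $u$ and $v$ vanishing simultaneously, and an elementary monotonicity argument for the decay at infinity. Your Pohozaev identity $(2-\tfrac n2+\tfrac n{p+1})\int_{B_R}u^{p+1}=R\int_{\partial B_R}\partial_\nu u\,\partial_\nu v$ is the $k=2$ case of the paper's Theorem \ref{th6.3} (using Proposition \ref{prop5.2} to equate $\int\nabla u\cdot\nabla v=\int v^2=\int u^{p+1}$), and your supercriticality threshold $p\ge\frac{n+4}{n-4}$ matches.
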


\begin{proof}
Here we use the shooting method.

We denote the solutions of (\ref{ODE}) by $u_a(r),v_a(r)$.

{\it Step 1.} By the standard contraction argument, we can see the
local existence.

{\it Step 2.} We claim that for $a \geq 4n$, there exists $R \in (0,1]$ such that
$u_a(r),v_a(r)>0$ for $r \in [0,R)$ and $u_a(R)=0$.

In fact, from (\ref{ODE}) we obtain $u_a'<0$ which implies $u_a(r) \leq u_a(0)=1$, and
$$
v_a(r)=v_a(0)-\int_0^r\tau^{1-n}\int_0^\tau s^{n-1}u_a^p(s)dsd\tau
\geq a-\frac{r^2}{2n} \geq \frac{a}{2}
$$
for $r \in [0,1]$. Therefore,
$$
u_a(r)=u_a(0)-\int_0^r\tau^{1-n}\int_0^\tau s^{n-1}v_a(s)dsd\tau
\leq 1-\frac{ar^2}{4n}.
$$
This proves that for $a \geq 4n$, we can find $R \in (0,1]$ such
that $u_a(r),v_a(r)>0$ for $r \in (0,R)$ and $u_a(R)=0$.

{\it Step 3.} We claim that for $0<a<\varepsilon_0=\frac{1}{n2^{1+p}}$, there
exists $R \in (0,1]$, such that $u_a(r),v_a(r)>0$ for $r \in [0,R)$ and
$v_a(R)=0$.

In fact,
$$
u_a(r) \geq 1-\frac{\varepsilon_0r^2}{2n} \geq \frac{1}{2},
$$
for $r \in (0,1)$. Therefore,
$$
v_a(r) < \varepsilon_0-\frac{1}{2^p}\frac{r^2}{2n}.
$$
This proves that for $a < \varepsilon_0$, we can find $R \in
(0,1]$ such that $u_a(r),v_a(r)>0$ for $r \in (0,R)$ and
$v_a(R)=0$.

{\it Step 4.} Let $\underline{a}=\sup \underline{S}$, where
$$
\underline{S}:=\{\varepsilon; \exists R_a>0, ~\hbox{such that}~
u_a(r)>0, v_a(r) \geq 0, ~\hbox{for}~ r \in [0,R_a], v_a(R_a)=0\}.
$$
Clearly, $\underline{S} \neq \emptyset$ by virtue of $\varepsilon_0
\in \underline{S}$. Noting $\varepsilon \leq a_0$ for $\varepsilon
\in \underline{S}$, we see the existence of $\underline{a}$.

{\it Step 5.} Write $\bar{u}(r)=u_{\underline{a}}(r)$ and
$\bar{v}(r)=v_{\underline{a}}(r)$. We claim that $\bar{u}(r),\bar{v}(r)>0$
for $r \in [0,\infty)$, and hence they are entire positive solutions of
(\ref{ODE}).

Otherwise, there exists $\bar{R}>0$ such that $\bar{u}(r),\bar{v}(r)>0$
for $r \in (0,\bar{R})$ and one of the following consequences holds:

(i) $\bar{u}(\bar{R})=0$, $\bar{v}(\bar{R})>0$;

(ii) $\bar{v}(\bar{R})=0$, $\bar{u}(\bar{R})>0$;

(iii) $\bar{u}(\bar{R})=0$, $\bar{v}(\bar{R})=0$.

We deduce the contradictions from three consequences above.

(i) By $C^1$-continuous dependence of $u_a,v_a$ in $a$, and the fact
$\bar{u}'(\bar{R})<0$, we see that for all $|a-\underline{a}|$ small,
there exists $R_a>0$ such that
$$\begin{array}{ll}
&\bar{u}(r),\bar{v}(r)>0, \quad for ~r \in (0,R_a);\\
&\bar{u}(R_a)=0, \quad \bar{v}(R_a)>0.
\end{array}
$$
This contradicts with the definition of $\underline{a}$.

(ii) Similarly, for $|a-\underline{a}|$ small, there exists $R_a>0$ such that
$$\begin{array}{ll}
&\bar{u}(r),\bar{v}(r)>0, \quad for ~r \in (0,R_a);\\
&\bar{u}(R_a)>0, \quad \bar{v}(R_a)=0.
\end{array}
$$
This implies that $\underline{a}+\delta \in \underline{S}$ for some $\delta>0$,
which contradicts with the definition of $\underline{a}$.

(iii) The consequence implies that $u(x)=\bar{u}(|x|)$ and $v(x)=\bar{v}(|x|)$
are solutions of the system
\begin{equation} \label{bs}
 \left \{
   \begin{array}{l}
      -\Delta u=v, \quad -\Delta v=u^p, ~in ~B_R,\\
      u,v>0 ~in ~B_R, \quad u=v=0 ~on ~\partial B_R.
   \end{array}
   \right.
\end{equation}
It is impossible
by the Pohozaev identity proved later (cf. Theorem \ref{th6.3}).

All the contradictions show that our claim is true. Thus, the entire positive solutions
exist.

{\it Step 6.} We claim $\lim_{r \to \infty}\bar{u}(r),\bar{v}(r)=0$.

Eq. (\ref{ODE}) implies $\bar{u}'<0$ and $\bar{v}'<0$ for $r>0$. So $\bar{u}$
and $\bar{v}$ are decreasing positive solutions, and
$\lim_{r \to \infty}\bar{u}(r)$, $\lim_{r \to \infty}\bar{v}(r)$ exist.

If there exists $c>0$ such that $\bar{v}(r) \geq c$ for $r>0$, then (\ref{ODE})
shows that $\bar{u}$ satisfies
$$
u''+\frac{n-1}{r}u' \leq -c.
$$
Integrating twice yields
$$
\bar{u}(r) \leq \bar{u}(0)-\frac{cr^2}{2n}
$$
for $r>0$. It is impossible since $\bar{u}$ is a entire positive solution.
This shows that $\bar{v} \to 0$ when $r \to \infty$.

Similarly, $u$ has the same property.
\end{proof}

\paragraph{Remark 6.1.}
\begin{enumerate}
\item When $k \in (2,n/2)$ is an integer, the existence of the
$2k$-order PDEs in the supercritical cases is rather challenged.
Recently, Li \cite{Li2011} applied the shooting method and the
analysis of the target map via the degree theory to obtain the
existence results for both (\ref{1.1k}) and (\ref{1.1s}) in
Theorem \ref{th1.1} in the supercritical cases.

\item In the critical case $p=\frac{n+\alpha}{n-\alpha}$,
(\ref{poi}) with $\alpha=2k$ is a solution of (\ref{1.1k}). For
the system (\ref{1.1s}), the critical condition
$\frac{1}{p+1}+\frac{1}{q+1}=1-\frac{\alpha}{n}$ leads to $pq>1$.
The argument in Corollary \ref{coro5.4} shows  the equivalence
between (\ref{1.1s}) and the HLS type system (\ref{hls}).
Therefore, the existence of (\ref{1.1s}) is implied by the
sufficiency of Theorem \ref{th5.3}.

\item In the subcritical case $p<\frac{n+\alpha}{n-\alpha}$, the
nonexistence of positive solutions of (\ref{1.3}) had been proved
(cf. \cite{CGS}, \cite{CLO4} and \cite{Yu}). On the other hand, by
the equivalence between (\ref{1.1k}) and (\ref{1.3}) (cf.
\cite{CL-2010} and \cite{ChLO}), we also see that (\ref{1.1k})
does not exist any positive solution. As regards the nonexistence
for the system (\ref{1.1s}) (or (\ref{1.3s})), it is the
Lane-Emden conjecture (or the HLS conjecture) (cf. \cite{CDM} and \cite{Souplet}).
\end{enumerate}

\subsection{Nonexistence in bounded domain}

In this subsection, we give the Pohozaev identity which the
proof of Theorem \ref{th6.1} needs. In
fact, we can give more general ones which imply nonexistence of
positive solutions of the following $2k$-order PDE (\ref{1.1k})
$$
(-\Delta)^k u=u^p, \quad k\geq 1,~ u>0
$$
with the supercritical exponent $p>\frac{n+2k}{n-2k}$ in any bounded
domain. The argument can help to prove the existence results in
$R^n$.

{\it Note.} Seeing here, we recall another related fact: in the
subcritical case, (\ref{1.1k}) has positive solutions in a bounded
domain. In general, the variational methods works now. However, it
has no positive solution in $R^n$ (cf. Remark 6.1(3)).

\begin{proposition} \label{prop5.2}
Let $D \subset R^n$ be a bounded domain. Assume that
$u_j$ $(j=1,2,\cdots,k)$ solve the following
boundary value problem
\begin{equation}
 \left \{
   \begin{array}{l}
      -\Delta u_1=u_2,\quad -\Delta u_2=u_3,\quad
\cdots,\\
-\Delta u_{k-1}=u_k,\quad -\Delta u_k=u_{k+1}:=u_1^p, \quad in ~D,\\
u_1=u_2=\cdots=u_k=0, \quad on ~\partial D.
   \end{array}
   \right.  \label{bing}         
 \end{equation}
Then
\begin{equation} \label{wu}
\begin{array}{ll}
&\displaystyle\int_Du_1^{p+1}dx=\int_Du_ju_{k+2-j}dx,\quad for~ j=1,2,\cdots,k;\\[3mm]
&\displaystyle\int_Du_1^{p+1}dx=\int_D \nabla u_j \nabla u_{k+1-j}dx,\quad for~ j=1,2,\cdots,k.
\end{array}
\end{equation}
\end{proposition}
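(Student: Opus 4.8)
The plan is to establish the two families of identities by a sequence of elementary integrations by parts, exploiting the zero Dirichlet boundary conditions on all the $u_j$. First I would prove the second family, $\int_D u_1^{p+1}\,dx = \int_D \nabla u_j\nabla u_{k+1-j}\,dx$ for $j=1,\dots,k$. The starting point is to observe that, since $-\Delta u_i = u_{i+1}$ in $D$ and $u_i = 0$ on $\partial D$, for any indices $i,\ell$ one has
\begin{equation}
\int_D \nabla u_i\nabla u_\ell\,dx = -\int_D u_\ell\,\Delta u_i\,dx = \int_D u_\ell\,u_{i+1}\,dx,
\label{bynabla}
\end{equation}
where the boundary term vanishes because $u_\ell|_{\partial D}=0$. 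In particular, taking $i=j$, $\ell=u_{k+1-j}$ in \eqref{bynabla} gives $\int_D\nabla u_j\nabla u_{k+1-j}\,dx=\int_D u_{k+1-j}u_{j+1}\,dx$, which is one of the products appearing in the first family (with index shift). So the second family of identities will follow once the first family is established; alternatively, \eqref{bynabla} can be applied symmetrically in $i$ and $\ell$ to deduce $\int_D u_{k+1-j}u_{j+1}\,dx = \int_D u_j u_{k+2-j}\,dx$ directly, again by moving the Laplacian from one factor to the other.

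For the first family, the key claim is that the quantity $\int_D u_j u_{k+2-j}\,dx$ is independent of $j$ for $j=1,\dots,k$. To see the shift $j\mapsto j+1$ produces the same value, integrate by parts twice:
\begin{equation}
\int_D u_j\,u_{k+2-j}\,dx = \int_D u_j\,(-\Delta u_{k+1-j})\,dx = \int_D (-\Delta u_j)\,u_{k+1-j}\,dx = \int_D u_{j+1}\,u_{k+1-j}\,dx,
\label{shiftidentity}
\end{equation}
where in the first step I used $-\Delta u_{k+1-j}=u_{k+2-j}$ (valid as long as $k+1-j\le k$, i.e. $j\ge 1$), in the middle step Green's identity with both boundary terms killed by $u_j|_{\partial D}=u_{k+1-j}|_{\partial D}=0$, and in the last step $-\Delta u_j = u_{j+1}$. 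Relabelling, \eqref{shiftidentity} says $\int_D u_j u_{k+2-j}\,dx = \int_D u_{j+1}u_{k+1-j}\,dx = \int_D u_{j+1}u_{(k+2)-(j+1)}\,dx$, which is exactly the $j+1$ term of the family. Hence all $k$ integrals coincide. It remains to identify the common value: for $j=1$ the integrand is $u_1 u_{k+1} = u_1\cdot u_1^p = u_1^{p+1}$, so $\int_D u_j u_{k+2-j}\,dx = \int_D u_1^{p+1}\,dx$ for every $j$, which is the first family. Feeding this back through \eqref{bynabla} (with $i=j$, $\ell = u_{k+1-j}$, giving $\int_D\nabla u_j\nabla u_{k+1-j}\,dx = \int_D u_{k+1-j}u_{j+1}\,dx$, and noting the right side is the $(j+1)$-st member of the already-proven first family when one uses the symmetry $\int_D u_{k+1-j}u_{j+1}\,dx=\int_D u_{j+1}u_{k+1-j}\,dx$) yields the second family.

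I expect no serious obstacle here: the statement is purely a bookkeeping exercise with integration by parts, and the only points requiring care are (a) checking that index ranges stay within $1,\dots,k+1$ so that every relation $-\Delta u_i=u_{i+1}$ invoked is one of the hypotheses rather than an out-of-range expression, and (b) verifying at each step that the boundary integrals genuinely vanish — which they do because at least one of the two factors in every boundary integral is some $u_\ell$ with $\ell\le k$, hence zero on $\partial D$. If one wanted a regularity caveat, one would note that $u_1\in C^2$ suffices for all these manipulations since then $u_2=-\Delta u_1$, $u_3=-\Delta u_2$, etc. are successively two derivatives less smooth but the bottom equation only needs $u_1^p\in C^0$; in the application (Theorem~\ref{th6.1}) the solutions are smooth radial functions, so this is not an issue. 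The mildly delicate bit, if any, is simply presenting the index arithmetic in \eqref{shiftidentity} transparently so the reader sees the telescoping $j\mapsto j+1$ clearly.
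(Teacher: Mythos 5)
Your proof is correct and follows essentially the same approach as the paper: a chain of integrations by parts, passing the Laplacian between factors via Green's identity, with the zero Dirichlet boundary conditions killing all boundary terms, telescoping from $\int_D u_1^{p+1}\,dx$ through the successive $\int_D\nabla u_j\nabla u_{k+1-j}\,dx$ and $\int_D u_ju_{k+2-j}\,dx$. The paper does it in one sweeping chain of equalities; you factor the same computation into a reusable shift identity and a $\nabla$-to-product identity, but the underlying argument is identical.
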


\begin{proof}
Applying the boundary value condition, from (\ref{bing}) we obtain
$$\begin{array}{ll}
&\displaystyle\int_D u_1^{p+1}dx=-\int_Du_1\Delta u_kdx=\int_D \nabla u_1 \nabla u_kdx\\[3mm]
&=-\displaystyle\int_D u_k\Delta u_1dx=\int_D u_2u_kdx=-\int_Du_2\Delta u_{k-1}dx\\[3mm]
&=\displaystyle\int_D \nabla u_2 \nabla u_{k-1}dx=\int_D u_3u_{k-1}dx=\cdots\\[3mm]
&=\displaystyle\int_D\nabla u_j\nabla u_{k+1-j}dx =\int_D
u_ju_{k+2-j}dx.
\end{array}
$$
This result implies (\ref{wu}).
\end{proof}

\begin{theorem} \label{th6.3}
Let $D \subset R^n$ be a bounded star-shaped domain. If
\begin{equation} \label{nc}
p \geq \frac{n+2k}{n-2k},
\end{equation}
then the following Navier boundary value problem has no
positive radial solution in $C^{2k}(D)\cap C^{2k-1}(\bar{D})$
\begin{equation}
 \left \{
   \begin{array}{l}
      (-\Delta)^k u=u^p \quad in \quad D,\\
      u=\Delta u=\cdots=\Delta^{k-1}u=0 \quad on \quad \partial D.
   \end{array}
   \right.   \label{hp}       
 \end{equation}

\end{theorem}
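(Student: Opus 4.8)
The plan is to run a Pohozaev--Rellich argument on the second-order cascade equivalent to (\ref{hp}), feeding on the energy identities already recorded in Proposition \ref{prop5.2}. Put $u_1=u$ and, as in (\ref{bing}), $u_{j+1}:=-\Delta u_j$ for $j=1,\dots,k-1$ and $u_{k+1}:=u_1^p=-\Delta u_k$. The Navier data $u=\Delta u=\cdots=\Delta^{k-1}u=0$ on $\partial D$ are exactly $u_1=u_2=\cdots=u_k=0$ on $\partial D$, which is the boundary hypothesis of Proposition \ref{prop5.2}; hence the identities $\int_D u_1^{p+1}\,dx=\int_D u_j u_{k+2-j}\,dx=\int_D\nabla u_j\cdot\nabla u_{k+1-j}\,dx$ hold for $j=1,\dots,k$. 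Since $u$ is radial, positive in $D$, and vanishes on $\partial D$, the domain $D$ is forced to be a ball about the centre of symmetry; after a translation we may take $D=B_R(0)$, so that $x\cdot\nu=R>0$ on $\partial D$ (for a general star-shaped $D$ one would instead use only $x\cdot\nu\ge 0$ on $\partial D$ together with $\int_{\partial D}x\cdot\nu\,ds=n|D|>0$).

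The heart of the argument is to evaluate $S:=\sum_{j=1}^k\int_D(x\cdot\nabla u_j)(-\Delta u_{k+1-j})\,dx=\sum_{j=1}^k\int_D(x\cdot\nabla u_j)\,u_{k+2-j}\,dx$ in two ways. First, relabelling $j\mapsto k+1-j$ gives $2S=\sum_{j=1}^k\int_D[(x\cdot\nabla u_j)(-\Delta u_{k+1-j})+(x\cdot\nabla u_{k+1-j})(-\Delta u_j)]\,dx$, and the Rellich identity
\[
\int_D[(x\cdot\nabla f)(-\Delta g)+(x\cdot\nabla g)(-\Delta f)]\,dx=(2-n)\int_D\nabla f\cdot\nabla g\,dx-\int_{\partial D}[(x\cdot\nabla f)\partial_\nu g+(x\cdot\nabla g)\partial_\nu f-(x\cdot\nu)\,\nabla f\cdot\nabla g]\,ds
\]
applied to each conjugate pair $(f,g)=(u_j,u_{k+1-j})$ can be summed: because $u_j=u_{k+1-j}=0$ on $\partial D$ one has $\nabla u_j=(\partial_\nu u_j)\nu$ there, so the three boundary terms collapse to $(x\cdot\nu)(\partial_\nu u_j)(\partial_\nu u_{k+1-j})$, while by Proposition \ref{prop5.2} each $\int_D\nabla u_j\cdot\nabla u_{k+1-j}\,dx$ equals $\int_D u_1^{p+1}\,dx$; this yields $2S=k(2-n)\int_D u_1^{p+1}\,dx-\sum_{j=1}^k\int_{\partial D}(x\cdot\nu)(\partial_\nu u_j)(\partial_\nu u_{k+1-j})\,ds$. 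Second, the $j=1$ term of $S$ is $\int_D(x\cdot\nabla u_1)u_1^p\,dx=-\frac{n}{p+1}\int_D u_1^{p+1}\,dx$ (integrate by parts; $u_1=0$ on $\partial D$), whereas for $j\ge 2$, writing $T_j:=\int_D(x\cdot\nabla u_j)u_{k+2-j}\,dx$, the relation $\int_D x\cdot\nabla(u_j u_{k+2-j})\,dx=-n\int_D u_j u_{k+2-j}\,dx$ together with $\int_D u_j u_{k+2-j}\,dx=\int_D u_1^{p+1}\,dx$ gives $T_j+T_{k+2-j}=-n\int_D u_1^{p+1}\,dx$; since $j\mapsto k+2-j$ is an involution of $\{2,\dots,k\}$, summation gives $\sum_{j=2}^k T_j=-\frac{n(k-1)}{2}\int_D u_1^{p+1}\,dx$, hence $S=-n(\frac{1}{p+1}+\frac{k-1}{2})\int_D u_1^{p+1}\,dx$.

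Equating the two expressions for $2S$, every interior integral collapses to a multiple of $\int_D u_1^{p+1}\,dx$ and the surviving coefficient simplifies, leaving the master identity
\[
(2k-n+\tfrac{2n}{p+1})\int_D u_1^{p+1}\,dx=\sum_{j=1}^k\int_{\partial D}(x\cdot\nu)(\partial_\nu u_j)(\partial_\nu u_{k+1-j})\,ds.
\]
To finish, I would show the right-hand side is strictly positive: from $-\Delta u_k=u_1^p>0$ in $D$ and $u_k=0$ on $\partial D$ the maximum principle gives $u_k>0$ in $D$, and cascading downward $u_{k-1},\dots,u_2>0$ in $D$; then Hopf's lemma (legitimate on the ball $B_R(0)$) gives $\partial_\nu u_j<0$ on $\partial D$ for every $j$, so $(\partial_\nu u_j)(\partial_\nu u_{k+1-j})>0$ there, while $x\cdot\nu>0$. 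Hence the left-hand side is positive, forcing $2k-n+\frac{2n}{p+1}>0$, i.e. (recalling $n-2k>0$) $p<\frac{n+2k}{n-2k}$, in contradiction with (\ref{nc}). This proves Theorem \ref{th6.3}; the case $k=1$ recovers the classical Pohozaev nonexistence theorem, a useful check on the constants.

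The step I expect to demand the most care is the boundary bookkeeping: confirming that the Navier data genuinely force $u_1=\cdots=u_k=0$ (so that $\nabla u_j$ is purely normal on $\partial D$ and the Rellich boundary terms telescope to the sign-definite expression), that the regularity $u\in C^{2k}(D)\cap C^{2k-1}(\bar D)$ is enough to legitimise all the integrations by parts for the $u_j$ up to $\partial D$, and that Hopf's lemma applies — immediate on a ball but needing an interior-ball condition for a general star-shaped domain. The other, milder, pitfall is the index and sign accounting in the symmetrization $T_j+T_{k+2-j}=-n\int_D u_1^{p+1}\,dx$ and in identifying the leftover coefficient as $2k-n+\frac{2n}{p+1}$; both are routine once the involution $j\mapsto k+2-j$ on $\{2,\dots,k\}$ and Proposition \ref{prop5.2} are in hand.
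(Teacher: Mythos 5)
Your proof is correct and follows essentially the same Pohozaev--Rellich strategy as the paper: reduce (\ref{hp}) to the second-order cascade of Proposition \ref{prop5.2}, multiply the $j$-th equation by $x\cdot\nabla u_{k+1-j}$, integrate by parts (your Rellich-identity packaging is the same calculation carried out in the paper's equations (\ref{jia})--(\ref{yi})), invoke the energy identities of Proposition \ref{prop5.2}, and then conclude positivity of the boundary term from the maximum principle, Hopf's lemma and star-shapedness to obtain the contradictory inequality $\tfrac{n}{p+1}+k-\tfrac{n}{2}>0$, i.e. $p<\tfrac{n+2k}{n-2k}$. Your caveats about Hopf's lemma requiring an interior-ball condition and about the regularity $u\in C^{2k}(D)\cap C^{2k-1}(\bar D)$ being exactly enough for the $u_j$ are apt and slightly more careful than the paper's own exposition.
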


\begin{proof}
Clearly, $u=u_1$ satisfies
$$
 \left \{
   \begin{array}{l}
      -\Delta u_1=u_2,~
-\Delta u_2=u_3,~
\cdots,\\
-\Delta u_{k-1}=u_k,~
-\Delta u_k=u_{k+1}:=u_1^p,\quad in ~D,\\
      u_1=u_2=\cdots=u_k=0, \quad on ~\partial D.
   \end{array}
   \right.
$$
By the maximum principle, from $-\Delta u_k=u_1^p>0$ and $u_k|_{\partial
D}=0$, we see $u_k>0$ in $D$. By the same way, we also deduce by induction
that
\begin{equation} \label{positive}
u_j>0 \quad in ~D, \quad j=1,2,\cdots,k.
\end{equation}

Multiplying the $j$-th equation by $(x \cdot \nabla u_{k+1-j})$,
we have
\begin{equation} \label{jia}
\begin{array}{ll}
&-\displaystyle\int_{\partial D}(x\cdot\nu)\partial_\nu u_j \partial_\nu u_{k+1-j}ds
+\int_D\nabla u_j \nabla u_{k+1-j}dx\\[3mm]
&+\displaystyle\int_D x\cdot \nabla u_{jx_i}(u_{k+1-j})_{x_i}dx
=\int_D u_{j+1}(x \cdot \nabla u_{k+1-j})dx
\end{array}
\end{equation}
for $j=1,2,\cdots,k$, where $\nu$ is the unit outward normal vector on $\partial D$.

Integrating by parts, we can see that
$$\begin{array}{ll}
&\displaystyle\int_D x \cdot (u_{jx_i}\nabla (u_{k+1-j})_{x_i}+(u_{k+1-j})_{x_i}
\nabla u_{jx_i})dx\\[3mm]
&=\displaystyle\int_D x \cdot \nabla(\nabla u_j \nabla u_{k+1-j})dx\\[3mm]
&=\displaystyle\int_{\partial D}(x\cdot \nu)\partial_\nu u_j \partial_\nu u_{k+1-j} ds
-n\int_D \nabla u_j \nabla u_{k+1-j} dx,
\end{array}
$$
Combining the results of (\ref{jia}) with $j$ and $k+1-j$, and using the result above,
we deduce that, for $j=1,2,3,\cdots,k$,
\begin{equation} \label{yi}
\begin{array}{ll}
&-\displaystyle\int_{\partial D}(x\cdot \nu)\partial_\nu u_j \partial_\nu u_{k+1-j} ds
+(2-n)\int_D\nabla u_j \nabla u_{k+1-j} dx\\[3mm]
&=\displaystyle\int_D u_{k+2-j}(x\cdot \nabla u_j) dx
+\int_D u_{j+1}(x \cdot \nabla u_{k+1-j})dx.
\end{array}
\end{equation}

Integrating by parts, we also see that for $j=2,3,\cdots,k$,
$$\begin{array}{ll}
&\displaystyle\int_D x\cdot(u_j \nabla u_{k+2-j}+u_{k+2-j} \nabla u_j)dx\\[3mm]
&=\displaystyle\int_D x\cdot \nabla (u_j u_{k+2-j})dx=-n\int_D u_j u_{k+2-j}dx.
\end{array}
$$
Summing $j$ from $1$ to $k$ in (\ref{yi}) and using the result above, we obtain
$$\begin{array}{ll}
&\displaystyle\frac{2-n}{2}\int_D(\nabla u_1\nabla u_k
+\nabla u_2\nabla u_{k-1}+\cdots+\nabla u_k\nabla u_1)dx\\[3mm]
&+\displaystyle\frac{n}{2}\int_D(u_2u_k+u_3u_{k-1}+\cdots+u_ku_2)dx
-\int_D u_{k+1}(x\cdot\nabla u_1)dx\\[3mm]
&=\displaystyle\int_{\partial D}(x\cdot\nu)[(\partial_\nu u_1 \partial_\nu u_k
+\partial_\nu u_2 \partial_\nu u_{k-1}+\cdots+
\partial_\nu u_k \partial_\nu u_1).
\end{array}
$$
By virtue of (\ref{positive}) and the boundary value condition, the Hopf lemma
shows that $\partial_{\nu}u_j<0$ on $\partial D$ for $j=1,2,\cdots,k$.
Noting $D$ is star-shaped, we know that
all terms in the right hand side of the result above are positive. Namely,
\begin{equation} \label{ding}
\begin{array}{ll}
&\displaystyle\frac{2-n}{2}\int_D(\nabla u_1\nabla u_k
+\nabla u_2\nabla u_{k-1}+\cdots+\nabla u_k\nabla u_1)dx\\[3mm]
&+\displaystyle\frac{n}{2}\int_D(u_2u_k+u_3u_{k-1}+\cdots+u_ku_2)dx
+\frac{n}{p+1}\int_D u^{p+1}dx >0.
\end{array}
\end{equation}
Inserting (\ref{wu}) into (\ref{ding}), we have
$$
\frac{n}{p+1}+\frac{k(2-n)}{2}+\frac{n(k-1)}{2}> 0.
$$
This contradicts (\ref{nc}).
\end{proof}

The following result is necessary to prove Theorem \ref{1.1} (2)
(cf. \cite{Li2011}).

\begin{theorem} \label{th6.4}
Let $D \subset R^n$ be a bounded star-shaped domain. If
\begin{equation} \label{nsc}
\frac{1}{p+1}+\frac{1}{q+1} \leq \frac{n-2k}{n},
\end{equation}
then the following Navier boundary value problem has no
positive radial solution in $C^{2k}(D)\cap C^{2k-1}(\bar{D})$
\begin{equation} \label{2ks-b}
 \left \{
   \begin{array}{l}
      (-\Delta)^k u=v^q, \quad (-\Delta)^k v=u^p \quad in ~ D,\\
      u=\Delta u=\cdots=\Delta^{k-1}u=0 \quad on \quad \partial D,\\
      v=\Delta v=\cdots=\Delta^{k-1}v=0 \quad on \quad \partial D.
   \end{array}
   \right.
\end{equation}
\end{theorem}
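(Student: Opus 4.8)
The plan is to run the Pohozaev argument of Theorem~\ref{th6.3} on the coupled pair of chains obtained by splitting the $2k$-order system into Poisson equations. Put $u_1=u$, $v_1=v$ and, for $j=1,\dots,k$, $u_j=(-\Delta)^{j-1}u$, $v_j=(-\Delta)^{j-1}v$, so that
$$
-\Delta u_j=u_{j+1}\ \ (1\le j\le k-1),\qquad -\Delta u_k=u_{k+1}:=v^{q},
$$
and symmetrically $-\Delta v_j=v_{j+1}$, $-\Delta v_k=v_{k+1}:=u^{p}$. The Navier conditions force $u_j=v_j=0$ on $\partial D$ for every $j\le k$. Applying the maximum principle successively, starting from $-\Delta u_k=v^{q}>0$ and moving up the chain, gives $u_j>0$ in $D$, and likewise $v_j>0$; the Hopf lemma then yields $\partial_\nu u_j<0$ and $\partial_\nu v_j<0$ on $\partial D$.

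Next I would record the system analogue of Proposition~\ref{prop5.2}. Integrating by parts back and forth along the two chains shows that, for every $j$,
$$
\int_D\nabla u_j\cdot\nabla v_{k+1-j}\,dx=\int_D u_j v_{k+2-j}\,dx=\int_D u^{p+1}\,dx=\int_D v^{q+1}\,dx=:C>0,
$$
the last equality being exactly the symmetry $\|u\|_{p+1}^{p+1}=\|v\|_{q+1}^{q+1}$ already used in the proof of Theorem~\ref{th5.3}.

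The main step is the Pohozaev identity. For each $j=1,\dots,k$ I would multiply the $j$-th equation of the $u$-chain by $x\cdot\nabla v_{k+1-j}$ and the $(k+1-j)$-th equation of the $v$-chain by $x\cdot\nabla u_j$, integrate over $D$, integrate by parts, and add the two. Using that tangential derivatives vanish on $\partial D$ (so that $\nabla u_j\cdot\nabla v_{k+1-j}=\partial_\nu u_j\,\partial_\nu v_{k+1-j}$ there), the two interior second-derivative terms assemble into $\int_D x\cdot\nabla(\nabla u_j\cdot\nabla v_{k+1-j})\,dx$, which integrates by parts once more into $-n\int_D\nabla u_j\cdot\nabla v_{k+1-j}\,dx$ plus a boundary term. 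Summing over $j$, using the identities above for the interior integrals, and using that $u^{p}(x\cdot\nabla u_1)$ and $v^{q}(x\cdot\nabla v_1)$ integrate to $-\tfrac{n}{p+1}C$ and $-\tfrac{n}{q+1}C$ (their boundary parts dropping since $u_1=v_1=0$ on $\partial D$), I expect the left side to collapse to $\sum_{j=1}^{k}\int_{\partial D}(x\cdot\nu)\,\partial_\nu u_j\,\partial_\nu v_{k+1-j}\,ds$ and the right side to $\big(\tfrac{n}{p+1}+\tfrac{n}{q+1}+2k-n\big)C$. Since $D$ is star-shaped, $x\cdot\nu>0$, and by Hopf $\partial_\nu u_j\,\partial_\nu v_{k+1-j}>0$, so the left side is strictly positive; with $C>0$ this forces $\tfrac1{p+1}+\tfrac1{q+1}>\tfrac{n-2k}{n}$, contradicting (\ref{nsc}).

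The conceptually routine but genuinely fiddly part — the place I expect the real work to lie — is the bookkeeping inside this last computation: one must choose the multiplier pairing so that every surviving boundary integral is one of the sign-definite terms $(x\cdot\nu)\partial_\nu u_j\partial_\nu v_{k+1-j}$, so that the interior second-derivative contributions of the two paired equations really do combine into a single divergence, and so that each remaining interior integral reduces, via the energy identities, to a fixed multiple of $C$, leaving only the dimensional balance. Apart from that, the argument is a faithful transcription of the scalar case in Theorem~\ref{th6.3}; radial symmetry of $(u,v)$ plays no role in it and is imposed only because it is what the intended application in Theorem~\ref{th6.1} and \cite{Li2011} requires.
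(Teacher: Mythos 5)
Your proposal is correct and follows essentially the same route as the paper: the same splitting into two chains, the same multiplier pairing (up to a harmless relabeling $j\leftrightarrow k+1-j$), the same system analogue of Proposition~\ref{prop5.2}, and the same final dimensional balance $\tfrac{n}{p+1}+\tfrac{n}{q+1}+2k-n>0$ forcing the contradiction with (\ref{nsc}). Your observation that radiality plays no role in the Pohozaev computation is also accurate.
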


\begin{proof}
Clearly, the solutions $u_1(=u)$ and $v_1(=v)$ of (\ref{2ks-b}) satisfy
$$
\left \{
   \begin{array}{l}
      -\Delta u_1=u_2, -\Delta u_2=u_3, \cdots, -\Delta u_k=u_{k+1}:=v^q, \quad in ~ D,\\
      -\Delta v_1=v_2, -\Delta v_2=v_3, \cdots, -\Delta v_k=v_{k+1}:=u^p, \quad in ~ D,\\
      u_1=u_2=\cdots=u_k=0 \quad on \quad \partial D,\\
      v_1=v_2=\cdots=v_k=0 \quad on \quad \partial D.
   \end{array}
   \right.
$$
Multiply $-\Delta u_j=u_{j+1}$ and $-\Delta v_j=v_{j+1}$
by $(x \cdot \nabla v_{k+1-j})$ and $(x \cdot \nabla u_{k+1-j})$,
respectively. Integrating by parts yields
\begin{equation} \label{yi1}
\begin{array}{ll}
&-\displaystyle\int_{\partial D}(x\cdot\nu)\partial_\nu u_j \partial_\nu v_{k+1-j} ds
+\int_D \nabla u_j \nabla v_{k+1-j} dx\\[3mm]
&+\displaystyle\int_D [x \cdot \nabla (v_{k+1-j})_{x_i}] u_{jx_i}dx
=\int_D u_{j+1}(x \cdot \nabla v_{k+1-j})dx
\end{array}
\end{equation}
and
\begin{equation} \label{yi2}
\begin{array}{ll}
&-\displaystyle\int_{\partial D}(x\cdot\nu)\partial_\nu v_j \partial_\nu u_{k+1-j} ds
+\int_D \nabla v_j \nabla u_{k+1-j} dx\\[3mm]
&+\displaystyle\int_D [x \cdot \nabla (u_{k+1-j})_{x_i}] v_{jx_i}dx
=\int_D v_{j+1}(x \cdot \nabla u_{k+1-j})dx.
\end{array}
\end{equation}

Adding the $(k+1-j)$-th (\ref{yi1}) and the $j$-th (\ref{yi2}) together leads to
\begin{equation} \label{geng1}
\begin{array}{ll}
&-\displaystyle\int_{\partial D}(x\cdot\nu)\partial_\nu v_1 \partial_\nu u_k ds
+(2-n)\int_D \nabla v_1 \nabla u_k dx\\[3mm]
&=-\displaystyle\frac{n}{q+1}\int_D v_1^{q+1}dx
+\int_D v_2(x \cdot \nabla u_k)dx,
\end{array}
\end{equation}
\begin{equation} \label{geng2}
\begin{array}{ll}
&-\displaystyle\int_{\partial D}(x\cdot\nu)\partial_\nu u_1 \partial_\nu v_k ds
+(2-n)\int_D \nabla u_1 \nabla v_k dx\\[3mm]
&=-\displaystyle\frac{n}{p+1}\int_D u_1^{q+1}dx
+\int_D u_2(x \cdot \nabla v_k)dx,
\end{array}
\end{equation}
and
\begin{equation} \label{geng3}
\begin{array}{ll}
&-\displaystyle\int_{\partial D}(x\cdot\nu)\partial_\nu v_j \partial_\nu u_{k+1-j} ds
+(2-n)\int_D \nabla v_j \nabla u_{k+1-j} dx\\[3mm]
&=\displaystyle\int_D [v_{j+1}(x \cdot \nabla u_{k+1-j})
+u_{k+2-j}(x \cdot \nabla v_j)]dx.
\end{array}
\end{equation}
Summing $j$ from $1$ to $k$, by (\ref{geng1}), (\ref{geng2}) and (\ref{geng3})
we deduce that
$$\begin{array}{ll}
&n\displaystyle\int_D(\frac{u_1^{p+1}}{p+1}+\frac{v_1^{q+1}}{q+1})dx
+n\int_D \sum_{j=2}^k v_ju_{k+2-j} dx\\[3mm]
&+(2-n)\displaystyle\int_D \sum_{j=1}^k \nabla v_j \nabla u_{k+1-j} dx\\[3mm]
&=\displaystyle\int_{\partial D}(x\cdot\nu)\sum_{j=1}^k
\partial_\nu v_j \partial_\nu u_{k+1-j}ds > 0.
\end{array}
$$

Similar to Proposition \ref{prop5.2}, it also follows
$$
\int_D u_1^{p+1} dx=\int_D v_1^{q+1} dx
=\int_D \nabla v_j \nabla u_{k+1-j}dx
=\int_D v_lu_{k+2-l}
$$
for $1 \leq j \leq k$ and $2 \leq l \leq k$.

Combining two results above, we have
$$
\frac{n}{p+1}+\frac{n}{q+1}+n(k-1)+(2-n)k > 0,
$$
which contradicts (\ref{nsc}).
\end{proof}

\paragraph{Acknowledgements.} The work of Y. Lei is partially supported by NSFC grant 11171158,
the Natural Science Foundation of Jiangsu (BK2012846) and SRF for ROCS, SEM.
The work of C. Li is partially supported by NSF grant DMS-0908097 and NSFC grant 11271166.

Yutian Lei

Institute of Mathematics, School of Mathematical Sciences,
Nanjing Normal University, Nanjing, 210097, China\\

Congming Li

Department of Applied Mathematical, University of Colorado at Boulder,
Boulder, CO 80309, USA

Department of Mathematics, and MOE-LSC, Shanghai Jiao Tong University,
Shanghai, 200240, China

\end{document}